\providecommand{\U}[1]{\protect\rule{.1in}{.1in}}
\numberwithin{equation}{section}
\providecommand{\U}[1]{\protect\rule{.1in}{.1in}}
\providecommand{\U}[1]{\protect\rule{.1in}{.1in}}
\providecommand{\U}[1]{\protect\rule{.1in}{.1in}}
\newtheorem{theorem}{Theorem}
\newtheorem{acknowledgement}[theorem]{Acknowledgement}
\newtheorem{condition}[theorem]{Condition}
\newtheorem{definition}[theorem]{Definition}
\newtheorem{example}[theorem]{Example}
\newtheorem{lemma}[theorem]{Lemma}
\newtheorem{proposition}[theorem]{Proposition}
\newenvironment{proof}[1][Proof]{\noindent\textbf{#1.} }{\ \rule{0.5em}{0.5em}}
\begin{document}

\title{Frames arising from irreducible solvable actions Part I}
\author{Vignon Oussa}
\maketitle

\begin{abstract}
Let $G$ be a simply connected, connected completely solvable Lie group with
Lie algebra $\mathfrak{g}=\mathfrak{p}+\mathfrak{m}.$ Next, let $\pi$ be an
infinite-dimensional unitary irreducible representation of $G$ obtained by
inducing a character from a closed normal subgroup $P=\exp\mathfrak{p}$ of
$G.$ Additionally, we assume that $G=P\rtimes M,$ $M=\exp\mathfrak{m}$ is a
closed subgroup of $G,$ $d\mu_{M}$ is a fixed Haar measure on the solvable Lie
group $M$ and there exists a linear functional $\lambda\in\mathfrak{p}^{\ast}$
such that the representation $\pi=\pi_{\lambda}=\mathrm{ind}_{P}^{G}\left(
\chi_{\lambda}\right)  $ is realized as acting in $L^{2}\left(  M,d\mu
_{M}\right)  .$ Making no assumption on the integrability of $\pi_{\lambda}$,
we describe explicitly a discrete subgroup $\Gamma\subset G$ and a vector
$\mathbf{f}\in L^{2}\left(  M,d\mu_{M}\right)  $ such that $\pi_{\lambda
}\left(  \Gamma\right)  \mathbf{f}$ is a tight frame for $L^{2}\left(
M,d\mu_{M}\right)  .$ We also construct compactly supported smooth functions
$\mathbf{s}$ and discrete subsets $\Gamma\subset G$ such that $\pi_{\lambda
}\left(  \Gamma\right)  \mathbf{s}$ is a frame for $L^{2}\left(  M,d\mu
_{M}\right)  .$

\end{abstract}

\section{Introduction and preliminaries}

Let $G$ be a locally compact group, and let $\pi$ be a strongly continuous
unitary irreducible representation of $G$ acting in an infinite-dimensional
Hilbert space $\mathcal{H}_{\pi}.$ Next, let $\Gamma$ be a discrete subset of
$G$ and fix $\mathbf{f}\in\mathcal{H}_{\pi}.$ We say that $\left\{  \pi\left(
\gamma\right)  \mathbf{f}:\gamma\in\Gamma\right\}  $ is a \textbf{frame} for
$\mathcal{H}_{\pi}$ if there exist positive constants $a\leq b$ (frame bounds)
such that
\[
a\left\Vert \mathbf{h}\right\Vert _{\mathcal{H}_{\pi}}^{2}\leq\sum_{\gamma
\in\Gamma}\left\vert \left\langle \mathbf{h,}\pi(\gamma)\mathbf{f}%
\right\rangle _{\mathcal{H}_{\pi}}\right\vert ^{2}\leq b\left\Vert
\mathbf{h}\right\Vert _{\mathcal{H}_{\pi}}^{2}%
\]
for any vector $\mathbf{h}$ in $\mathcal{H}_{\pi}.$ The frame operator $S$ is
defined as
\[
S\mathbf{h=}\sum_{\gamma\in\Gamma}\left\langle \mathbf{h,}\pi(\gamma
)\mathbf{f}\right\rangle _{\mathcal{H}_{\pi}}\pi(\gamma)\mathbf{f}\text{
\ (}\mathbf{h}\in\mathcal{H}_{\pi}\text{)}%
\]
and if $\left\{  \pi\left(  \gamma\right)  \mathbf{f}:\gamma\in\Gamma\right\}
$ is a frame for $\mathcal{H}_{\pi}$ then $S$ is invertible and every vector
$\mathbf{h}$ in $\mathcal{H}_{\pi}$ admits the expansion
\[
\mathbf{h=}\sum_{\gamma\in\Gamma}\left\langle \mathbf{h},S^{-1}\pi\left(
\gamma\right)  \mathbf{f}\right\rangle _{\mathcal{H}_{\pi}}\pi\left(
\gamma\right)  \mathbf{f}%
\]
with convergence in the norm of $\mathcal{H}_{\pi}$. If $a=b$ then $\left\{
\pi\left(  \gamma\right)  \mathbf{f}:\gamma\in\Gamma\right\}  $ is called a
\textbf{tight frame} and every vector $\mathbf{h}\in\mathcal{H}_{\pi}$ admits
the simpler series expansion
\[
\mathbf{h}=\sum_{\gamma\in\Gamma}\left\langle \mathbf{h,}\pi\left(
\gamma\right)  \frac{\mathbf{f}}{\sqrt{a}}\right\rangle _{\mathcal{H}_{\pi}%
}\pi\left(  \gamma\right)  \frac{\mathbf{f}}{\sqrt{a}}\mathbf{.}%
\]
We are interested in finding conditions under which there exist a discrete
subset $\Gamma\subset G$ and a vector $\mathbf{f}\in\mathcal{H}_{\pi}$ such
that the collection $\left\{  \pi\left(  \gamma\right)  \mathbf{f}:\gamma
\in\Gamma\right\}  $ is a (tight) frame for $\mathcal{H}_{\pi}.$ Moreover, if
there exist frames generated by $\pi,$ we would also like to present an
explicit procedure for the construction of $\Gamma$ and $\mathbf{f}$ such that
$\left\{  \pi\left(  \gamma\right)  \mathbf{f}:\gamma\in\Gamma\right\}  $ is a
frame for $\mathcal{H}_{\pi}.$ If $\pi$ is an irreducible and integrable
representation, then the coorbit theory \cite{coorbit,coorbit1,coorbit2} which
was developed by Feichtinger and Gr\"{o}chenig provides a powerful and
flexible discretization scheme. For example, the theory of coorbit has proved
to be quite successful in the context of shearlets \cite{shear1, shear2}. In a
more general direction, the work contained in \cite{vanishing,coorbit}
addresses the case where $G=\mathbb{R}^{d}\rtimes H$ (where $H$ is a closed
subgroup of $\mathrm{GL}\left(  \mathbb{R}^{d}\right)  $) is the semi-direct
product group with multiplication law given by
\[
\left(  x,M\right)  \left(  x^{\prime},M^{\prime}\right)  =\left(
x+Mx^{\prime},MM^{\prime}\right)
\]
and $\pi$ is the quasiregular representation of $G$ which is realized as
acting in $L^{2}\left(  \mathbb{R}^{d}\right)  $ such that
\begin{equation}
\left[  \pi\left(  x,id_{H}\right)  \mathbf{f}\right]  \left(  t\right)
=\mathbf{f}\left(  t-x\right)  \text{ and }\left[  \pi\left(  0,M\right)
\mathbf{f}\right]  \left(  t\right)  =\left\vert \det M\right\vert
^{-1/2}\mathbf{f}\left(  M^{-1}t\right)  .\label{quasi}%
\end{equation}
It is well-known that if the quasiregular representation $\pi$ is irreducible
then it is integrable as well \cite{coorbit}. As such, the coorbit machinery
of Feichtinger and Gr\"{o}chenig can be applied to construct frames for a
large class of Banach spaces. In fact, it is proved in \cite{Tousi} that if
$\mathbf{f}\in L^{2}\left(  \mathbb{R}^{d}\right)  $ satisfies some decay,
smoothness and vanishing moments conditions, then there exists a discrete
subset $\Gamma\subset\mathbb{R}^{d}\rtimes H$ such that $\left\{  \pi\left(
\gamma\right)  \mathbf{f}:\gamma\in\Gamma\right\}  $ is a wavelet frame for
the Hilbert space on which the representation $\pi$ is acting \cite{Tousi}.
Unfortunately, the theory of coorbit heavily depends on the integrability of
$\pi.$ Thus, a large class of irreducible representations are automatically
excluded because they do not fit within the scope of the theory. \vskip0.1cm

The main objective of the present work is to provide a method for constructing
frames arising from the action of irreducible representations of some solvable
Lie groups which does not depend on any type of integrability condition on the
representations of interest. To be more specific, let $G=P\rtimes M$ be a
simply connected, connected completely solvable Lie group such that
$P=\exp\mathfrak{p}$ and $M=\exp\mathfrak{m}$ are closed solvable Lie
subgroups of $G.$ Moreover, we shall assume that there exists a unitary
character $\chi$ of $P$ such that
\[
\pi=\mathrm{ind}_{P}^{G}\left(  \chi\right)
\]
is an irreducible representation of $G$ realized as acting on functions
defined on the conormal subgroup $M.$ The space on which the representation
$\pi$ is acting consists of measurable functions on $M$ which are
square-integrable with respect to a fixed Haar measure on $M.$ Without
imposing any additional assumption on the integrability of the representation
$\pi,$ we shall provide a unified and explicit procedure which can be
exploited to construct discrete tight frames, and smooth compactly supported
frames generated by the action of $\pi.$

\subsection{Notation}

\begin{itemize}
\item Let $T$ be a linear operator acting on a vector space spanned by an
ordered basis $\mathfrak{B}.$ The matrix representation of the linear operator
$T$ is denoted $\left[  T\right]  _{\mathfrak{B}}$ and $T^{\ast}$ stands for
the adjoint of the linear operator $T.$

\item The transpose of a matrix $M$ is denoted $M^{T}.$ Moreover, the inverse
transpose of $M$ is written as $M^{\top}.$

\item Let $Q$ be a linear operator acting on an $n$-dimensional real vector
space $V.$ The norm of the matrix $Q$ induced by the max-norm of the vector
space $V$ is given by
\[
\left\Vert Q\right\Vert _{\infty}=\sup\left\{  \left\Vert Qv\right\Vert
_{\max}:v\in V\text{ and }\left\Vert v\right\Vert _{\max}=1\right\}
\]
and the max-norm of an arbitrary vector is
\[
\left\Vert v\right\Vert _{\max}=\max\left\{  \left\vert v_{k}\right\vert
:1\leq k\leq n\right\}  .
\]

\item Let $A$ be a Lebesgue measurable subset of $%
\mathbb{R}
^{d}.$ The Lebesgue measure of $A$ is denoted $\left\vert A\right\vert .$

\item Let $\mathfrak{s}$ be Lie algebra, and let $\lambda$ be a linear
functional in the dual of $\mathfrak{s}$ which we denote by $\mathfrak{s}%
^{\ast}.$ A subalgebra (or ideal) $\mathfrak{p}$ of $\mathfrak{s}$ is said to
be subordinated to the linear functional $\lambda$ if
\[
\left[  \mathfrak{p,p}\right]  =%
\mathbb{R}
\text{-span}\left\{  \left[  X,Y\right]  :X,Y\in\mathfrak{p}\right\}
\]
is contained in the kernel of the linear functional $\lambda.$
\end{itemize}

\subsection{Completely solvable Lie groups}

Let $\mathfrak{g}$ be a finite-dimensional Lie algebra over $\mathbb{R}.$
Given subsets $\mathfrak{r,s\subseteq g,}$ we define $\left[  \mathfrak{r,s}%
\right]  $ as the linear span of vectors
\[
\left[  X,Y\right]  =XY-YX
\]
such that $X\in\mathfrak{r}$ and $Y\in\mathfrak{s}.$ Put $\mathfrak{g}%
_{\left(  0\right)  }=\mathfrak{g}$ and define in a recursive fashion
\[
\mathfrak{g}_{\left(  k\right)  }=\left[  \mathfrak{g}_{\left(  k-1\right)
},\mathfrak{g}_{\left(  k-1\right)  }\right]  .
\]
The sequence
\[
\mathfrak{g}_{\left(  0\right)  }\supseteq\mathfrak{g}_{\left(  1\right)
}\supseteq\mathfrak{g}_{\left(  2\right)  }\cdots
\]
is called the \textbf{derived series} of the Lie algebra $\mathfrak{g}.$ In a
similar fashion, we define the \textbf{descending central series} of
$\mathfrak{g}$ inductively as follows
\[
\mathfrak{g}^{\left(  0\right)  }=\mathfrak{g,g}^{\left(  k\right)  }=\left[
\mathfrak{g}^{\left(  k-1\right)  },\mathfrak{g}\right]  .
\]
A Lie algebra is \textbf{solvable} if there exists a natural number $n$ such
that $\dim\mathfrak{g}_{\left(  n\right)  }=0$. Additionally, a Lie algebra
$\mathfrak{g}$ is called \textbf{nilpotent} if $\dim\mathfrak{g}^{\left(
n\right)  }=0$ for some natural number $n.$ From the definitions provided
above, it is clear that $\left[  \mathfrak{g}_{\left(  k-1\right)
},\mathfrak{g}_{\left(  k-1\right)  }\right]  \subseteq\left[  \mathfrak{g}%
^{\left(  k-1\right)  },\mathfrak{g}\right]  $ for all $k.$ As such, every
nilpotent Lie algebra is necessarily solvable. However, there exist solvable
Lie algebras which are not nilpotent. Additionally, a solvable Lie algebra
$\mathfrak{g}$ is called \textbf{completely solvable} if for any
$Z\in\mathfrak{g},$ the spectrum of the linear operator
\[
\mathfrak{g}\backepsilon X\mapsto\left[  Z,X\right]  =ZX-XZ
\]
is a subset of the reals. If $\mathfrak{g}$ is nilpotent then for any given
$Z\in\mathfrak{g,}$ the spectrum of the linear operator $X\mapsto\left[
Z,X\right]  $ must coincide with $\left\{  0\right\}  .$ Furthermore, it is
well-known that if $\mathfrak{g}$ is a completely solvable Lie algebra then
there exists an ordered basis $\mathfrak{B}=\left(  Z_{1},\cdots,Z_{d}\right)
$ of $\mathfrak{g}$ such that each $\mathfrak{g}_{i}=%
\mathbb{R}
$-span $\left\{  Z_{1},\cdots,Z_{i}\right\}  $ is an ideal in $\mathfrak{g.}$
Such a basis $\mathfrak{B}$ is called a \textbf{strong Malcev basis} for the
Lie algebra. If $G$ is a simply connected, connected Lie group with a
completely solvable finite-dimensional real Lie algebra $\mathfrak{g}$ then,
$G$ is called a \textbf{completely solvable} Lie group. As it is well-known,
if $G$ is completely solvable then the exponential map defines a bi-analytic
bijection between $\mathfrak{g}$ and $G,$ and the inverse of the exponential
map is denoted $\log$. If $\mathfrak{B}=\left(  Z_{1},\cdots,Z_{d}\right)  $
is a strong Malcev basis for $\mathfrak{g}$ then the map
\[
\left(  t_{1},\cdots,t_{d}\right)  \mapsto\exp\left(  t_{1}Z_{1}\right)
\exp\left(  t_{2}Z_{2}\right)  \cdots\exp\left(  t_{d}Z_{d}\right)
\]
defines an analytic diffeomorphism from $%
\mathbb{R}
^{d}$ onto $G$ (see \cite{Vara}, Theorem $3.18.11.$) This map induces a system
of coordinates on $G$ called the \textbf{canonical coordinates of the second
kind}. Let $\mathfrak{p}$ be a subalgebra of $\mathfrak{g.}$ Next, let
$P=\exp\mathfrak{p.}$ If $\mathfrak{p}$ is subordinated to the linear
functional $\lambda,$ then
\[
\chi_{\lambda}\left(  \exp X\right)  =e^{2\pi i\left\langle \lambda
,X\right\rangle }=e^{2\pi i\lambda\left(  X\right)  }%
\]
defines a continuous one-dimensional representation of $P$, and $\chi
_{\lambda}$ is called a \textbf{unitary character} of $P.$ Given an element
$x=\exp\left(  X\right)  \in G$, we define the linear maps $\mathrm{ad}\left(
X\right)  ,$ $Ad\left(  x\right)  $ acting on the Lie algebra of $G$ as
follows: $\mathrm{ad}\left(  X\right)  \left(  Y\right)  =\left[  X,Y\right]
$ and $Ad\left(  x\right)  =e^{\mathrm{ad}\left(  X\right)  }.$ Let
\[
\mathbf{O}_{\lambda}=\left\{  Ad\left(  x^{-1}\right)  ^{\ast}\lambda:x\in
G\right\}
\]
be the \textbf{coadjoint orbit} of the linear functional $\lambda,$ and let
$\mathfrak{p}^{\bot}$ be the orthogonal complement of $\mathfrak{p}$ in the
dual vector space $\mathfrak{g}^{\ast}.$ That is,
\[
\mathfrak{p}^{\bot}=\left\{  \ell\in\mathfrak{g}^{\ast}:\ell\left(  X\right)
=0\text{ for all }X\in\mathfrak{p}\right\}  .
\]
It is well-known that the induced representation
\[
\pi_{\lambda}=\mathrm{ind}_{P}^{G}\left(  \chi_{\lambda}\right)
\]
is irreducible if and only if \cite{Pukansky}

\begin{enumerate}
\item $\dim\left(  \mathfrak{p}\right)  =\dim\left(  \mathfrak{g}\right)
-\frac{1}{2}\dim\left(  \mathbf{O}_{\lambda}\right)  $

\item Pukansky's condition holds. That is, $\lambda+\mathfrak{p}^{\bot
}\subseteq\mathbf{O}_{\lambda}$.
\end{enumerate}

\noindent Moreover, for every irreducible unitary representation of $G,$ there
exists a linear functional $\ell\in\mathfrak{g}^{\ast}$ and a subalgebra
$\mathfrak{p}_{\ell}$ subordinated to $\ell$ such that the given
representation is unitarily equivalent to the induced representation
$\pi_{\ell}=\mathrm{ind}_{\exp\mathfrak{p}_{\ell}}^{G}\left(  \chi_{\ell
}\right)  .$ Let us now suppose that $G$ is a solvable Lie group satisfying
the following.

\begin{condition}
\label{cond} $G$ is connected, simply connected completely solvable Lie group
of the type $G=P\rtimes M$ where $P=\exp\mathfrak{p},M=\exp\mathfrak{m}$ are
closed subgroups of $G.$ Moreover, there exists a linear functional $\lambda$
in $\mathfrak{p}^{\ast}$ such that the induced representation $\pi_{\lambda
}=\mathrm{ind}_{P}^{G}\left(  \chi_{\lambda}\right)  $ which is realized as
acting in $L^{2}\left(  M,d\mu_{M}\right)  $ is an irreducible representation
of $G$ ($d\mu_{M}$ is a fixed Haar measure on the solvable subgroup $M.$)
\end{condition}

Let $\left(  A_{1},A_{2},\cdots,A_{n_{2}}\right)  $ be a fixed strong Malcev
basis for the Lie algebra $\mathfrak{m.}$ For a fixed $m\in M,$ there exists a
unique element $a=\left(  a_{1},a_{2},\cdots,a_{n_{2}}\right)  \in
\mathbb{R}^{n_{2}}$ such that
\[
m=\exp\left(  a_{1}A_{1}\right)  \cdots\exp\left(  a_{n_{2}}A_{n_{2}}\right)
.
\]
Letting $dA$ be the Lebesgue measure on the Lie algebra of $M,$ a left Haar
measure on $M$ is up to multiplication by a constant uniquely determined as
follows (see \cite{Faraut} Page $90$)%
\[
d\mu_{M}\left(  \exp\left(  \sum_{k-1}^{n_{2}}a_{k}A_{k}\right)  \right)
=d\mu_{M}\left(  \exp\left(  A\right)  \right)  =\left\vert \det\left(
\frac{id-e^{-\mathrm{ad}\left(  A\right)  }}{\mathrm{ad}\left(  A\right)
}\right)  \right\vert dA.
\]
Put
\[
w\left(  A\right)  =\left\vert \det\left(  \frac{id-e^{-\mathrm{ad}\left(
A\right)  }}{\mathrm{ad}\left(  A\right)  }\right)  \right\vert .
\]
The function $w$ is a non-vanishing smooth positive function on the Lie
algebra $\mathfrak{m}$ satisfying $w\left(  0\right)  =1.$ Moreover, the
modular function of $M$ is given by
\[
\Delta_{M}\left(  \exp\left(  a_{1}A_{1}\right)  \cdots\exp\left(  a_{n_{2}%
}A_{n_{2}}\right)  \right)  =\left\vert \det Ad\left(  \exp\left(  a_{1}%
A_{1}\right)  \cdots\exp\left(  a_{n_{2}}A_{n_{2}}\right)  \right)
\right\vert ^{-1}%
\]
and if $\mathfrak{m}$ is a nilpotent algebra then $\mathrm{ad}\left(
A\right)  $ is nilpotent and $w\left(  A\right)  =1.$ Put $\dim\left(
P\right)  =n_{1}$ and $\dim\left(  M\right)  =n_{2}.$ The mapping
\[
\left(  x,a\right)  =\left(  x_{1},x_{2},\cdots,x_{n_{1}},a_{1},a_{2}%
,\cdots,a_{n_{2}}\right)  \mapsto\exp\left(  \sum_{k=1}^{n_{1}}x_{k}%
X_{k}\right)  \exp\left(  a_{1}A_{1}\right)  \cdots\exp\left(  a_{n_{2}%
}A_{n_{2}}\right)
\]
defines an analytic diffeomorphism between the Lie algebra $\mathfrak{g}$ and
its Lie group $G$. This diffeormorphism induces a system of coordinates on the
Lie group $G.$ Since every element $g\in G$ is uniquely written as
\[
g=\left(  p,n\right)  \in P\times M
\]
it follows that
\[
\left[  \pi_{\lambda}\left(  g\right)  \mathbf{f}\right]  \left(  m\right)
=\left[  \pi_{\lambda}\left(  p,n\right)  \mathbf{f}\right]  \left(  m\right)
=e^{2\pi i\left\langle \lambda,\log\left(  m^{-1}pm\right)  \right\rangle
}\mathbf{f}\left(  n^{-1}m\right)  .
\]
If $p=\exp\left(  X\right)  $ for some $X=\sum_{k=1}^{n_{1}}x_{k}X_{k}%
\in\mathfrak{p}$ then there exist some real numbers $a_{k}$ such that
\[
\left[  \pi_{\lambda}\left(  p,n\right)  \mathbf{f}\right]  \left(  m\right)
=e^{2\pi i\left\langle \lambda,e^{-\mathrm{ad}\left(  a_{n_{2}}A_{n_{2}%
}\right)  }\cdots e^{-\mathrm{ad}\left(  a_{2}A_{2}\right)  }e^{-\mathrm{ad}%
\left(  a_{1}A_{1}\right)  }X\right\rangle }\mathbf{f}\left(  n^{-1}m\right)
.
\]
Next, define the linear map $\mathbf{C}\left(  a\right)  :\mathfrak{p}%
\rightarrow\mathfrak{p}$ such that
\begin{equation}
\mathbf{C}\left(  a\right)  =\mathbf{C}\left(  a_{1},a_{2},\cdots,a_{n_{2}%
}\right)  =\left.  \left(  e^{-\mathrm{ad}\left(  a_{n_{2}}A_{n_{2}}\right)
}\cdots e^{-\mathrm{ad}\left(  a_{2}A_{2}\right)  }e^{-\mathrm{ad}\left(
a_{1}A_{1}\right)  }\right)  \right\vert _{\mathfrak{p}}.
\end{equation}
Given $r=\left(  r_{1},\cdots,r_{n_{2}}\right)  \in%
\mathbb{R}
^{n_{2}},$ let
\[
\mathbf{Q}\left(  a,\lambda,X\right)  =\left\langle \mathbf{C}\left(
a\right)  ^{\ast}\lambda,X\right\rangle
\]
and
\[
\mathbf{e}\left(  r\right)  =\exp\left(  r_{1}A_{1}\right)  \cdots\exp\left(
r_{n_{2}}A_{n_{2}}\right)  .
\]
Then
\begin{equation}
\left[  \pi_{\lambda}\left(  \exp\left(  \sum_{k=1}^{n_{1}}x_{k}X_{k}\right)
\mathbf{e}\left(  t\right)  \right)  \mathbf{f}\right]  \left(  \mathbf{e}%
\left(  a\right)  \right)  =e^{2\pi i\left(  \mathbf{Q}\left(  a,\lambda
,x\right)  \right)  }\mathbf{f}\left(  \mathbf{e}\left(  t\right)
^{-1}\mathbf{e}\left(  a\right)  \right)  .\label{irre}%
\end{equation}

\subsection{Wavelet theory and time-frequency analysis}

In order to convince the reader that the class of groups under investigation
is relevant to wavelet theory and time-frequency analysis experts, we shall
present a few examples belonging to the class of groups under consideration.

\begin{itemize}
\item (\textbf{Affine group}) Let $G$ be the \textbf{ax+b} group with Lie
algebra $\mathfrak{g}$ spanned by $X_{1},A_{1}$ such that
\[
\left[  A_{1},X_{1}\right]  =X_{1}.
\]
Given a linear functional $\lambda=\lambda_{1}X_{1}^{\ast}$ such that
$\lambda_{1}$ is a non-zero real number, $\pi_{\lambda}$ is realized as acting
on the Hilbert space $L^{2}\left(  M,d\mu_{M}\right)  $ as follows. For a
square-integrable function $\mathbf{f}$ with respect to the fixed Haar measure
$d\mu_{M},$
\[
\left[  \pi_{\lambda}\left(  \exp\left(  xX_{1}\right)  \exp\left(
tA_{1}\right)  \right)  \mathbf{f}\right]  \left(  \exp\left(  aA_{1}\right)
\right)  =e^{2\pi ixe^{-a}\lambda_{1}}\mathbf{f}\left(  \exp\left(  \left(
a-t\right)  A_{1}\right)  \right)  .
\]
Since $G$ is isomorphic to $\mathbb{R}\rtimes e^{\mathbb{R}}$ with
multiplication law
\[
\left(  x,e^{t}\right)  \left(  y,e^{s}\right)  =\left(  x+e^{t}%
y,e^{t+s}\right)  ,
\]
we may in fact remodel the representation $\pi_{\lambda}$ as acting on
$L^{2}\left(  \left(  0,\infty\right)  ,\frac{dh}{h}\right)  $ such that
\[
\left[  \pi_{\lambda}\left(  \exp\left(  xX_{1}\right)  \exp\left(
tA_{1}\right)  \right)  \mathbf{f}\right]  \left(  h\right)  =e^{\frac{2\pi
ix\lambda_{1}}{h}}\mathbf{f}\left(  \frac{h}{e^{t}}\right)  .
\]

\item (\textbf{Toeplitz shearlet groups}) Let us consider the Lie algebra
spanned by
\[
X_{1},X_{2},\cdots,X_{n_{1}},A_{1},\cdots,A_{n_{2}-1},A_{n_{2}}%
\]
where $n_{2}=n_{1}.$ The vector space generated by $\mathfrak{B}%
_{\mathfrak{p}}=\left(  X_{1},X_{2},\cdots,X_{n_{1}}\right)  $ is a
commutative ideal, the vector space generated by $\left(  A_{1},\cdots
,A_{n_{2}-1},A_{n_{2}}\right)  $ is commutative and the matrix representation
of $\mathrm{ad}\left(  \sum_{k=1}^{n_{2}}t_{k}A_{k}\right)  $ restricted to
$\mathfrak{p}$ with respect to the ordered basis $\mathfrak{B}_{\mathfrak{p}}$
is
\begin{equation}
N\left(  t\right)  =\left[  \left.  \mathrm{ad}\left(  \sum_{k=1}^{n_{2}}%
t_{k}A_{k}\right)  \right\vert _{\mathfrak{p}}\right]  _{\mathfrak{B}%
_{\mathfrak{p}}}=\left[
\begin{array}
[c]{cccccc}%
t_{n_{2}} & t_{1} & t_{2} & \cdots & \cdots & t_{n_{2}-1}\\
0 & t_{n_{2}} & t_{1} & t_{2} & \cdots & \vdots\\
\vdots & 0 & t_{n_{2}} & t_{1} & \ddots & \\
0 & \ddots & \ddots & t_{n_{2}} & \ddots & t_{2}\\
0 & \ddots & 0 & 0 & \ddots & t_{1}\\
0 & \cdots & 0 & 0 & 0 & t_{n_{2}}%
\end{array}
\right]  . \label{At}%
\end{equation}
Given a linear functional $\lambda=\sum_{k=1}^{n_{1}}\lambda_{k}X_{k}^{\ast}$
such that $\lambda_{1}\neq0,$ $\pi_{\lambda}$ is an irreducible representation
of $G$ modeled as follows. For a fixed Haar measure \ $d\mu$ on $M,$
$\pi_{\lambda}$ acts on $L^{2}\left(  M,d\mu\right)  $ as follows
\begin{equation}
\left[  \pi_{\lambda}\left(  \exp\left(  \sum_{k=1}^{n_{2}}x_{k}X_{k}\right)
\exp\left(  \sum_{k=1}^{n_{2}}t_{k}A_{k}\right)  \right)  \mathbf{f}\right]
\left(  \mathbf{e}\left(  a\right)  \right)  =e^{2\pi i\left\langle
\exp\left(  N\left(  -a\right)  ^{\ast}\right)  \lambda,x\right\rangle
}\mathbf{f}\left(  \mathbf{e}\left(  t\right)  ^{-1}\mathbf{e}\left(
a\right)  \right)  .
\end{equation}

\item (\textbf{Heisenberg groups and generalizations}) Let $G=P\rtimes M$ be a
\textbf{step-two (}that is,\textbf{ }$\left[  \mathfrak{g,g}\right]  $ is
non-trivial and is contained in the center of $\mathfrak{g}$) nilpotent Lie
group with Lie algebra spanned by
\[
\left\{  \underset{\mathfrak{z}\left(  \mathfrak{g}\right)  }{\underbrace
{X_{1},\cdots,X_{r}}},X_{r+1},\cdots,X_{r+n_{2}},A_{1},\cdots,A_{n_{2}%
}\right\}
\]
such that
\[
P=\exp\left(  \sum_{k=1}^{r+n_{2}}%
\mathbb{R}
X_{k}\right)  ,M=\exp\left(  \sum_{k=1}^{n_{2}}%
\mathbb{R}
A_{k}\right)
\]
are commutative, and $\exp\mathfrak{z}\left(  \mathfrak{g}\right)  $ is the
center of $G.$ Moreover, let $\lambda\in\mathfrak{g}^{\ast}$ such that
\[
\det B\left(  \lambda\right)  =\det\left[
\begin{array}
[c]{ccc}%
\left\langle \lambda,\left[  A_{1},X_{r+1}\right]  \right\rangle  & \cdots &
\left\langle \lambda,\left[  A_{1},X_{r+n_{2}}\right]  \right\rangle \\
\vdots & \ddots & \vdots\\
\left\langle \lambda,\left[  A_{n_{2}},X_{r+n_{2}}\right]  \right\rangle  &
\cdots & \left\langle \lambda,\left[  A_{n_{2}},X_{r+n_{2}}\right]
\right\rangle
\end{array}
\right]  \neq0.
\]
Then $\pi_{\lambda}$ is irreducible and is realized as acting on the Hilbert
space $L^{2}\left(
\mathbb{R}
^{n_{2}}\right)  $ as follows
\[
\left[  \pi_{\lambda}\left(  \exp\left(  \sum_{k=1}^{r+n_{2}}x_{k}%
X_{k}\right)  \exp\left(  \sum_{k=1}^{n_{2}}t_{k}A_{k}\right)  \right)
\mathbf{f}\right]  \left(  a\right)  =e^{2\pi i\left\langle e^{-\left(
\left.  \sum_{k=1}^{n_{2}}a_{k}\mathrm{ad}\left(  A_{k}\right)  \right\vert
\mathfrak{p}\right)  ^{\ast}}\lambda,x\right\rangle }\mathbf{f}\left(
a-t\right)  .
\]

\item (\textbf{Higher order time-frequency groups} \cite{metabelian}) Let
$G=P\rtimes M$ be a nilpotent Lie group with Lie algebra spanned by
$X_{1},\cdots,X_{n_{2}+1},A_{1},\cdots,A_{n_{2}}$ such that $P$ and $M$ are
commutative closed subgroups and
\[
N\left(  t\right)  =\left[  \left.  \sum_{k=1}^{n_{2}}t_{k}\mathrm{ad}\left(
A_{k}\right)  \right\vert _{\mathfrak{p}}\right]  _{\left(  X_{1}%
,\cdots,X_{n_{2}+1}\right)  }=\left[
\begin{array}
[c]{ccccc}%
0 & t_{1} & t_{2} & \cdots & t_{n_{2}}\\
& 0 & t_{1} & \ddots & \vdots\\
&  & \ddots & \ddots & t_{2}\\
&  &  & 0 & t_{1}\\
&  &  &  & 0
\end{array}
\right]  .
\]
Then $%
\mathbb{R}
X_{1}=\mathfrak{z}\left(  \mathfrak{g}\right)  $ is the central ideal of the
Lie algebra $\mathfrak{g}.$ Let $\lambda=\lambda_{1}X_{1}^{\ast}%
\in\mathfrak{p}^{\ast}$ be a linear functional satisfying $\lambda_{1}\neq0.$
The corresponding irreducible unitary representation $\pi_{\lambda}$ is
realized as acting on the Hilbert space $L^{2}\left(
\mathbb{R}
^{n_{2}}\right)  $ as follows. Given $a=\left(  a_{1},\cdots,a_{n_{1}}\right)
\in%
\mathbb{R}
^{n_{1}}$ and $t=\left(  t_{1},\cdots,t_{n_{1}}\right)  \in%
\mathbb{R}
^{n_{2}},$ we have%
\[
\left[  \pi_{\lambda}\left(  \exp\left(  \sum_{k=1}^{n_{2}+1}x_{k}%
X_{k}\right)  \exp\left(  \sum_{k=1}^{n_{2}}t_{k}A_{k}\right)  \right)
\mathbf{f}\right]  \left(  a\right)  =e^{2\pi i\left(  \left\langle N\left(
-a\right)  ^{T}\lambda,x\right\rangle \right)  }\mathbf{f}\left(  a-t\right)
.
\]

\item (\textbf{Solvable extensions of} $\mathbb{R}^{n_{1}})$ Let $A$ be a
matrix of order $n_{1}$ in its Jordan canonical form, and let $G=\mathbb{R}%
^{n_{1}}\rtimes\exp\left(
\mathbb{R}
A\right)  $ with multiplication law
\[
\left(  v,t\right)  \left(  w,s\right)  =\left(  v+\exp\left(  tA\right)
w,t+s\right)  .
\]
Next, let $\pi_{\lambda}$ be an irreducible representation of $G$ acting on
$L^{2}\left(
\mathbb{R}
\right)  $ such that
\[
\left[  \pi_{\lambda}\left(  v,s\right)  \mathbf{f}\right]  \left(  t\right)
=e^{2\pi i\left(  \left\langle \exp\left(  -tA\right)  ^{T}\lambda
,v\right\rangle \right)  }\mathbf{f}\left(  t-s\right)
\]
where $\lambda\in\left(  \mathbb{R}^{n_{1}}\right)  ^{\ast}$ such that
\[
\left\{  \exp\left(  tA\right)  ^{T}\lambda=\lambda:t\in%
\mathbb{R}
\right\}
\]
is trivial. Notice that $\pi_{\lambda}$ is neither generally a
square-integrable representation nor an integrable representation of $G$.
However, this class of representations fits within the scope of our method.
This example is naturally generalized as follows. Let $\mathfrak{m}=\sum
_{k=1}^{n_{2}}\mathbb{R}A_{k}$ be a solvable Lie algebra of upper-triangular
matrices of order $n_{1}$ consisting of matrices whose spectrum is contained
in $\mathbb{R}$ (see \cite{vanishing,coorbit,Generalized Calderon}.) Next, let
$G=\mathbb{R}^{n_{1}}\rtimes\exp\left(  \mathfrak{m}\right)  $ be the
semidirect product group with multiplication law given by
\[
\left(  v,\exp\left(  A\right)  \right)  \left(  v^{\prime},\exp\left(
A^{\prime}\right)  \right)  =\left(  v+\exp\left(  A\right)  v^{\prime}%
,\exp\left(  A\right)  \exp\left(  A^{\prime}\right)  \right)  .
\]
$G$ is a completely solvable Lie group with Lie algebra $%
\mathbb{R}
^{n_{1}}\oplus\mathfrak{m.}$ Moreover, let $\chi_{\lambda}$ be a character of
$%
\mathbb{R}
^{n_{1}}$ defined by
\[
\chi_{\lambda}\left(  v\right)  =e^{2\pi i\left\langle \lambda,v\right\rangle
}%
\]
where $\lambda\in\left(
\mathbb{R}
^{n_{1}}\right)  ^{\ast}.$ Furthermore, we assume that
\[
G_{\lambda}=\left\{  \exp A\in\exp\mathfrak{m}:\exp\left(  A\right)
^{T}\lambda=\lambda\right\}
\]
is the trivial subgroup of $\exp\mathfrak{m.}$ Next, the representation
$\pi_{\lambda}=\mathrm{ind}_{%
\mathbb{R}
^{n_{1}}}^{G}\left(  \chi_{\lambda}\right)  $ is an irreducible representation
of $G$ acting on the Hilbert space $L^{2}\left(  \exp\left(  \mathfrak{m}%
\right)  ,d\mu_{\exp\left(  \mathfrak{m}\right)  }\right)  $ as follows%
\begin{equation}
\left[  \pi_{\lambda}\left(  \mathbf{x}\right)  \mathbf{f}\right]  \left(
\exp A\right)  =\left\{
\begin{array}
[c]{c}%
\mathbf{f}\left(  \exp\left(  -A^{\prime}\right)  \exp\left(  A\right)
\right)  \text{ if }\mathbf{x}=\left(  0,\exp A^{\prime}\right)  \\
e^{2\pi i\left\langle \exp\left(  -A\right)  ^{T}\lambda,v\right\rangle
}\mathbf{f}\left(  \exp\left(  A\right)  \right)  \text{ if }\mathbf{x}%
=\left(  v,id\right)
\end{array}
\right.  .
\end{equation}

\item (\textbf{Extensions of non-commutative nilpotent Lie groups}) Let
$G=P\rtimes M$ be a completely solvable Lie group where $P$ is a
non-commuative nilpotent Lie normal subgroup and $M$ is isomorphic to a
subgroup of the automorphism group of $P$ \cite{monomial,wavelets on
nilpotent}. Moreover, let us suppose that there exists a character
$\chi_{\lambda}$ of $P$ such that the stabilizer of the coadjoint action of
$M$ on the linear functional $\lambda$ is trivial. Then $\pi_{\lambda
}=\mathrm{ind}_{P}^{P\rtimes M}\left(  \chi_{\lambda}\right)  $ is irreducible
and given $\mathbf{f}\in L^{2}\left(  M,d\mu_{M}\right)  $,%
\[
\left[  \pi_{\lambda}\left(  p,n\right)  \mathbf{f}\right]  \left(  m\right)
=e^{2\pi i\left\langle Ad\left(  m^{-1}\right)  ^{\ast}\lambda,\log\left(
p\right)  \right\rangle }\mathbf{f}\left(  n^{-1}m\right)
\]
as described in (\ref{irre})
\end{itemize}

\subsection{Short overview of the main results}

The main objective of the present paper is to establish the following. Let $G$
and $\pi_{\lambda}$ be as defined in Condition \ref{cond}. There exist a
discrete subset $\Gamma$ of $G$ and a function $\mathbf{f}\in L^{2}\left(
M,d\mu_{M}\right)  $ such that $\left\{  \pi_{\lambda}\left(  \gamma\right)
\mathbf{f}:\gamma\in\Gamma\right\}  $ is a frame for $L^{2}\left(  M,d\mu
_{M}\right)  .$ Moreover, the function $\mathbf{f}$ can be chosen to be
infinitely smooth and compactly supported on $M.$ In contrast to other
discretization schemes such as the coorbit theory, we insist that \textbf{no
assumption} is being made about the integrability of $\pi_{\lambda}.$

\subsubsection{A unified procedure for the construction of tight frames}

In this subsection, we shall present a scheme which is systematically
exploited to construct pairs $\left(  \Gamma,\mathbf{f}\right)  $ such that
$\left\{  \pi_{\lambda}\left(  \gamma\right)  \mathbf{f}:\gamma\in
\Gamma\right\}  $ is a tight frame for $L^{2}\left(  M,d\mu_{M}\right)  .$ Our
procedure is outlined as follows.

\begin{enumerate}
\item Fix a linear functional $\lambda$ with corresponding unitary irreducible
representation
\[
\pi_{\lambda}=\mathrm{ind}_{P}^{P\rtimes M}\left(  \chi_{\lambda}\right)  .
\]
Given $a=\left(  a_{1},a_{2},\cdots,a_{n_{2}}\right)  \in%
\mathbb{R}
^{m},$ let
\[
A\left(  a_{1},a_{2},\cdots,a_{n_{2}}\right)  =A\left(  a\right)  =%
{\displaystyle\sum\limits_{k=1}^{n_{2}}}
a_{k}A_{k}=A.
\]
Next, let $\theta_{\lambda}:\mathfrak{m}\rightarrow\mathfrak{p}^{\ast}$ be a
smooth function defined as follows
\begin{equation}
\theta_{\lambda}\left(
{\displaystyle\sum\limits_{k=1}^{n_{2}}}
a_{k}A_{k}\right)  =\mathbf{C}\left(  a\right)  ^{\ast}\lambda=\left(  \left.
\left(  e^{-\mathrm{ad}\left(  a_{n_{2}}A_{n_{2}}\right)  }\cdots
e^{-\mathrm{ad}\left(  a_{2}A_{2}\right)  }e^{-\mathrm{ad}\left(  a_{1}%
A_{1}\right)  }\right)  \right\vert _{\mathfrak{p}}\right)  ^{\ast}\lambda.
\end{equation}
Then $\theta_{\lambda}$ represents the coadjoint action of the conormal
subgroup $M$ on the linear functional $\lambda.$ Moreover, under the
assumptions stated above, $\theta_{\lambda}$ defines an immersion of
$\mathfrak{m}$ into $\mathfrak{p}^{\ast}$ (see Lemma \ref{immersion}). In
other words, the differential of $\theta_{\lambda}$ is injective at each point
$A,$ and $\theta_{\lambda}$ behaves locally like an injective function. Let
$D_{\theta_{\lambda}}$ be the differential of $\theta_{\lambda}$ at the zero
element in $\mathfrak{m}$. To be more explicity, if
\begin{equation}
\mathrm{Jac}_{\theta_{\lambda}}\left(  a\right)  =\left[
\begin{array}
[c]{ccc}%
\dfrac{\partial\left[  \left(  \mathbf{C}\left(  a\right)  ^{\ast}%
\lambda\right)  _{1}\right]  }{\partial a_{1}} & \cdots & \dfrac
{\partial\left[  \left(  \mathbf{C}\left(  a\right)  ^{\ast}\lambda\right)
_{1}\right]  }{\partial a_{n_{2}}}\\
\vdots & \ddots & \vdots\\
\dfrac{\partial\left[  \left(  \mathbf{C}\left(  a\right)  ^{\ast}%
\lambda\right)  _{n_{1}}\right]  }{\partial a_{1}} & \cdots & \dfrac
{\partial\left[  \left(  \mathbf{C}\left(  a\right)  ^{\ast}\lambda\right)
_{n_{1}}\right]  }{\partial a_{n_{2}}}%
\end{array}
\right]  \label{Jac}%
\end{equation}
then
\begin{equation}
D_{\theta_{\lambda}}=\mathrm{Jac}_{\theta_{\lambda}}\left(  0\right)
.\label{Jaczero}%
\end{equation}

\item Next, let $D_{\theta_{\lambda}}\left(  j_{1},\cdots,j_{n_{2}}\right)  $
be the submatrix of $D_{\theta_{\lambda}}$ obtained by retaining the
$j_{1}^{th}$-row$,\cdots,j_{n_{2}}^{th}$-row of the matrix $\left[
D_{\theta_{\lambda}}\right]  _{\mathfrak{B}_{\mathfrak{p}}}$. Put
\[
\mathcal{T}=\left\{  \boldsymbol{I}:\boldsymbol{I}=\left(  j_{1}%
,\cdots,j_{n_{2}}\right)  \text{ and }1\leq j_{1}<\cdots<j_{n_{2}}\leq
n_{1}\right\}
\]
and define
\[
\mathcal{A}=\left\{  D_{\theta_{\lambda}}\left(  \boldsymbol{I}\right)
:\boldsymbol{I}\in\mathcal{T}\text{ and }\mathrm{\det}\left(  D_{\theta
_{\lambda}}\left(  \boldsymbol{I}\right)  \right)  \neq0\right\}  .
\]
Note that if $\left[  D_{\theta_{\lambda}}\right]  _{\mathfrak{B}%
_{\mathfrak{p}}}$ is an invertible matrix of order $n_{2}$ (this is not
generally the case) then $\mathcal{A}$ is necessarily a singleton. Fix
\begin{equation}
\boldsymbol{J}\left(  \lambda\right)  =\left(  j_{1},\cdots,j_{n_{2}}\right)
\in\mathcal{T}\label{setJ}%
\end{equation}
such that $D_{\theta_{\lambda}}\left(  \boldsymbol{J}\left(  \lambda\right)
\right)  \in\mathcal{A}$ and define $\mathbf{P}_{\boldsymbol{J}\left(
\lambda\right)  }:\mathfrak{p}\rightarrow\mathfrak{p}$ such that
\begin{equation}
\mathbf{P}_{\boldsymbol{J}\left(  \lambda\right)  }\left(  X_{k}\right)
=\left\{
\begin{array}
[c]{c}%
X_{k}\text{ if }k\in\boldsymbol{J}\left(  \lambda\right)  \\
0\text{ if }k\notin\boldsymbol{J}\left(  \lambda\right)
\end{array}
\right.  .\label{DJ}%
\end{equation}
Clearly, $\mathbf{P}_{\boldsymbol{J}\left(  \lambda\right)  }$ is a linear
map, and the matrix representation of the linear map $\mathbf{P}%
_{\boldsymbol{J}\left(  \lambda\right)  }$ is a diagonal matrix whose spectrum
is contained in the discrete set $\left\{  0,1\right\}  .$ Evidently,
$\mathbf{P}_{\boldsymbol{J}\left(  \lambda\right)  }$ defines an orthogonal
projection of rank $n_{2}$ on the Lie algebra $\mathfrak{p.}$ Furthermore, if
$X$ belongs to the range of $\mathbf{P}_{\boldsymbol{J}\left(  \lambda\right)
}$, we obtain
\begin{equation}
\left[  \pi_{\lambda}\left(  \exp X\right)  \mathbf{f}\right]  \left(
\exp\left(  a_{1}A_{1}\right)  \cdots\exp\left(  a_{n_{2}}A_{n_{2}}\right)
\right)  =e^{2\pi i\mathbf{Q}\left(  a,\lambda,X\right)  }\mathbf{f}\left(
\exp\left(  a_{1}A_{1}\right)  \cdots\exp\left(  a_{n_{2}}A_{n_{2}}\right)
\right)
\end{equation}
where
\[
\mathbf{Q}\left(  a,\lambda,X\right)  =\left\langle \mathbf{P}_{\boldsymbol{J}%
\left(  \lambda\right)  }^{\ast}\left(  \mathbf{C}\left(  a_{1},a_{2}%
,\cdots,a_{n_{2}}\right)  \right)  ^{\ast}\lambda,X\right\rangle .
\]

\item Define $\beta_{\boldsymbol{J}\left(  \lambda\right)  }:\mathfrak{m}%
\rightarrow\mathbf{P}_{\boldsymbol{J}\left(  \lambda\right)  }^{\ast}\left(
\mathfrak{p}^{\ast}\right)  $ such that
\begin{equation}
\beta_{\boldsymbol{J}\left(  \lambda\right)  }\left(
{\displaystyle\sum\limits_{k=1}^{n_{2}}}
a_{k}A_{k}\right)  =\mathbf{P}_{\boldsymbol{J}\left(  \lambda\right)  }^{\ast
}\left(  \mathbf{C}\left(  a_{1},a_{2},\cdots,a_{n_{2}}\right)  \right)
^{\ast}\lambda.
\end{equation}
According to Lemma \ref{U}, $\beta_{\boldsymbol{J}\left(  \lambda\right)  }$
is a local diffeomorphism at the zero element in $\mathfrak{m}$. By the
Inverse Function Theorem (\cite{Lee}, Theorem $5.11$) there exists a connected
open subset $\mathcal{O}$ around the zero element of $\mathfrak{m}$ such that
the restriction of $\beta_{\boldsymbol{J}\left(  \lambda\right)  }$ to
$\mathcal{O}$ is a diffeomorphism. We shall coin the collection of maps
\begin{equation}
\mathfrak{Data}_{\left(  \pi_{\lambda},G\right)  }=\left\{  \beta
_{\boldsymbol{J}\left(  \lambda\right)  }:D_{\theta_{\lambda}}\left(
\boldsymbol{J}\left(  \lambda\right)  \right)  \in\mathcal{A}\right\}  ,
\end{equation}
the \textbf{orbital data corresponding to the linear functional} $\lambda.$
Next, let
\begin{equation}
\mathbf{L}=\left\{  s\in\left(  0,\infty\right)  :\sum_{k=1}^{n_{2}}\left[
-\frac{s}{2},\frac{s}{2}\right]  A_{k}\subset\mathcal{O}\right\}  .
\end{equation}
Clearly, $\mathbf{L}$ is a non-empty subset of $\left(  0,\infty\right)  .$
Fix $\epsilon\in\mathbf{L}$, define a relatively compact subset $\Omega
_{\epsilon}\subset M$ and a discrete subset $\Gamma_{M\text{ }}^{\epsilon}$ of
$M$ as follows.
\begin{equation}
\Omega_{\epsilon}=\exp\left(  \left[  -\frac{\epsilon}{2},\frac{\epsilon}%
{2}\right)  A_{1}\right)  \cdots\exp\left(  \left[  -\frac{\epsilon}{2}%
,\frac{\epsilon}{2}\right)  A_{n_{2}}\right)  ,
\end{equation}
and
\[
\Gamma_{M\text{ }}^{\epsilon}=\exp\left(  \epsilon%
\mathbb{Z}
A_{1}\right)  \cdots\exp\left(  \epsilon%
\mathbb{Z}
A_{n_{2}}\right)
\]
respectively. Appealing to Proposition \ref{tilesM}, the collection
\[
\left\{  \gamma^{-1}\Omega_{\epsilon}:\gamma\in\Gamma_{M\text{ }}^{\epsilon
}\right\}
\]
is a measurable partition of $M.$ Next, define
\begin{equation}
\mathcal{O}_{\epsilon}=\sum_{k=1}^{n_{2}}\left(  -\frac{\epsilon}{2}%
,\frac{\epsilon}{2}\right)  A_{k}\text{ and }\digamma_{\epsilon}=\sum
_{k=1}^{n_{2}}\left[  -\frac{\epsilon}{2},\frac{\epsilon}{2}\right)
A_{k}\text{.}%
\end{equation}
Note that $\mathcal{O}_{\epsilon}$ is just the interior of the set
$\digamma_{\epsilon},$ and the map $\beta_{\boldsymbol{J}\left(
\lambda\right)  }$ is continuous on $\mathcal{O}$ and uniformly continuous on
$\overline{\mathcal{O}_{\epsilon}}$ (the topological closure of $\mathcal{O}%
_{\epsilon}.$) Moreover, $\beta_{\boldsymbol{J}\left(  \lambda\right)
}\left(  \overline{\mathcal{O}_{\epsilon}}\right)  $ is a compact subset of
\[
\mathbf{P}_{\boldsymbol{J}\left(  \lambda\right)  }^{\ast}\left(
\mathfrak{p}^{\ast}\right)  =\sum_{j\in\boldsymbol{J}\left(  \lambda\right)  }%
\mathbb{R}
X_{j}^{\ast}.
\]
Thus, $\beta_{\boldsymbol{J}\left(  \lambda\right)  }\left(  \overline
{\mathcal{O}_{\epsilon}}\right)  $ is a bounded subset, has positive Lebesgue
measure on $\mathbf{P}_{\boldsymbol{J}\left(  \lambda\right)  }^{\ast}\left(
\mathfrak{p}^{\ast}\right)  ,$ and the set $\beta_{\boldsymbol{J}\left(
\lambda\right)  }\left(  \digamma_{\epsilon}\right)  $ is contained in a
fundamental domain of a full-rank lattice of the vector space $\mathbf{P}%
_{\boldsymbol{J}\left(  \lambda\right)  }^{\ast}\left(  \mathfrak{p}^{\ast
}\right)  .$

\item Let
\begin{equation}
\mathbf{T}^{\epsilon}=\left\{
\begin{array}
[c]{c}%
\mathcal{T}:\mathcal{T}\text{ is a full-rank lattice of }\mathbf{P}%
_{\boldsymbol{J}\left(  \lambda\right)  }^{\ast}\left(  \mathfrak{p}^{\ast
}\right)  \text{ and }\\
\sum_{\kappa\in\mathcal{T}}\mathbf{1}_{\beta_{\boldsymbol{J}\left(
\lambda\right)  }\left(  \digamma_{\epsilon}\right)  }\left(  \xi
+\kappa\right)  \leq1\text{ for every }\xi\in\mathbf{P}_{\boldsymbol{J}\left(
\lambda\right)  }^{\ast}\left(  \mathfrak{p}^{\ast}\right)
\end{array}
\right\}  .\label{T}%
\end{equation}
In other words, $\mathcal{L}\in\mathbf{T}^{\epsilon}$ if and only if
$\beta_{\boldsymbol{J}\left(  \lambda\right)  }\left(  \digamma_{\epsilon
}\right)  $ is contained in a fundamental domain of $\beta_{\boldsymbol{J}%
\left(  \lambda\right)  }\left(  \digamma_{\epsilon}\right)  .$ It is shown in
Lemma \ref{GammaP} that $\mathbf{T}^{\epsilon}$ is a non-empty set. To be more
specific, Lemma \ref{GammaP} describes explicitly an invertible linear map
\begin{equation}
\mathcal{L}^{\epsilon}:\sum_{j\in\boldsymbol{J}\left(  \lambda\right)  }%
\mathbb{R}
X_{j}\rightarrow\sum_{j\in\boldsymbol{J}\left(  \lambda\right)  }%
\mathbb{R}
X_{j}%
\end{equation}
such that given
\[
\Gamma_{P}^{\epsilon}=\exp\left(  \Lambda_{\epsilon}\right)  \subset P\text{
\ where }\Lambda_{\epsilon}=%
\mathbb{Z}
\text{-span}\left\{  \mathcal{L}^{\epsilon}X_{j}:j\in\boldsymbol{J}\left(
\lambda\right)  \right\}
\]
the following holds true. The set
\[
\beta_{\boldsymbol{J}\left(  \lambda\right)  }\left(  \digamma_{\epsilon
}\right)  \subset\sum_{j\in\boldsymbol{J}\left(  \lambda\right)  }%
\mathbb{R}
X_{j}^{\ast}%
\]
is contained in a Lebesgue measurable fundamental domain of
\[
\Lambda_{\epsilon}^{\star}=%
\mathbb{Z}
\text{-span}\left\{  \left(  \mathcal{L}^{\epsilon}\right)  ^{\top}X_{j}%
^{\ast}:j\in\boldsymbol{J}\left(  \lambda\right)  \right\}  \subset
\mathfrak{p}^{\ast}%
\]
where $\left(  \mathcal{L}^{\epsilon}\right)  ^{\top}$ is the transpose
inverse of $\mathcal{L}^{\epsilon}$.

\item For a positive measurable function $r$ defined on $M,$ we define
$\mathbf{f}_{r,\epsilon}=r\times\mathbf{1}_{\Omega_{\epsilon}}.$ Next, let
\[
\mathbf{e}\left(
{\displaystyle\sum\limits_{k=1}^{n_{2}}}
a_{k}A_{k}\right)  =\exp\left(  a_{1}A_{1}\right)  \cdots\exp\left(  a_{n_{2}%
}A_{n_{2}}\right)  .
\]
Clearly $\mathbf{e}:\mathfrak{m}\rightarrow M$ is a diffeomorphism.
Furthermore, let $\rho$ be the Radon-Nikodym derivative given by%
\begin{equation}
d\mu_{M}\left(  \mathbf{e}\left(
{\displaystyle\sum\limits_{k=1}^{n_{2}}}
a_{k}A_{k}\right)  \right)  =d\mu_{M}\left(  \exp\left(  a_{1}A_{1}\right)
\cdots\exp\left(  a_{n_{2}}A_{n_{2}}\right)  \right)  =\rho\left(  A\right)
dA
\end{equation}
where $d\mu_{M}$ is a left Haar measure on the solvable group $M$ and $dA$ is
the Lebesgue measure on $\mathfrak{m}=\mathbb{R}^{n_{2}}$. The function $\rho$
is an analytic function which is explicitly computed as follows. Let $\nu$ be
a smooth bijection defined on the Lie algebra $\mathfrak{m}$ such that
\begin{equation}
\exp\left(  a_{1}A_{1}\right)  \cdots\exp\left(  a_{n_{2}}A_{n_{2}}\right)
=\exp\left(  \nu\left(
{\displaystyle\sum\limits_{k=1}^{n_{2}}}
a_{k}A_{k}\right)  \right)  =\exp\left(
{\displaystyle\sum\limits_{k=1}^{n_{2}}}
\nu_{k}\left(  a\right)  A_{k}\right)  .
\end{equation}
For example, if $\mathfrak{m}$ is commutative then $\nu$ is the identity map,
and if $\mathfrak{m}$ is a nilpotent algebra, then $\nu$ is a polynomial. In
general, $\exp\left(  a_{1}A_{1}\right)  \cdots\exp\left(  a_{n_{2}}A_{n_{2}%
}\right)  $ is computed by applying the Campbell-Baker-Hausdorff formula
iteratively as follows. Defining
\begin{align*}
X\ast Y &  =\sum_{n>0}\frac{\left(  -1\right)  ^{n+1}}{n}\sum_{p_{i}%
+q_{i}>0,1\leq i\leq n}\frac{\left(  \sum_{i=1}^{n}\left(  p_{i}+q_{i}\right)
\right)  ^{-1}}{p_{1}!q_{1}!\cdots p_{n}!q_{n}!}\\
&  \left(  adX\right)  ^{p_{1}}\left(  adY\right)  ^{q_{1}}\cdots\left(
adX\right)  ^{p_{n}}\left(  adY\right)  ^{q_{n}-1}Y\\
&  =\nu\left(  X+Y\right)  ,
\end{align*}
then
\[
\exp\left(  a_{1}A_{1}\right)  \cdots\exp\left(  a_{n_{2}}A_{n_{2}}\right)
=\exp\left(  a_{1}A_{1}\ast\cdots\ast a_{n_{2}}A_{n_{2}}\right)  .
\]
In other words,
\[
\nu\left(
{\displaystyle\sum\limits_{k=1}^{n_{2}}}
a_{k}A_{k}\right)  =a_{1}A_{1}\ast\cdots\ast a_{n_{2}}A_{n_{2}}%
\]
and for a positive function $\mathbf{f}\in L^{1}\left(  M,d\mu_{M}\right)  ,$%
\begin{align*}
\int_{M}\mathbf{f}\left(  m\right)  d\mu_{M}\left(  m\right)   &
=\int_{\mathfrak{m}}\mathbf{f}\left(  \exp A\right)  d\mu_{M}\left(  \exp
A\right)  \\
&  =\int_{\mathfrak{m}}\mathbf{f}\left(  \exp A\right)  \underset{=w\left(
A\right)  }{\underbrace{\left\vert \det\left(  \frac{id-e^{-\mathrm{ad}\left(
A\right)  }}{\mathrm{ad}\left(  A\right)  }\right)  \right\vert }}dA\\
&  =\int_{\mathfrak{m}}\mathbf{f}\left(  \exp A\right)  w\left(  A\right)
dA\\
&  =\int_{\mathfrak{m}}\mathbf{f}\left(  \exp\nu\left(  A\right)  \right)
\overset{=\rho\left(  A\right)  dA}{\overbrace{w\left(  \nu\left(  A\right)
\right)  d\left(  \nu\left(  A\right)  \right)  }}\\
&  =\int_{\mathfrak{m}}\mathbf{f}\left(  \exp\nu\left(  A\right)  \right)
\rho\left(  A\right)  dA.
\end{align*}

\item Fix $\Lambda_{\epsilon}^{\star}\in\mathbf{T}^{\epsilon}.$ Let
\begin{align*}
\Gamma^{\epsilon}  &  =\left(  \Gamma_{M\text{ }}^{\epsilon}\right)
^{-1}\Gamma_{P}^{\epsilon}=\left(  \exp\left(  \epsilon%
\mathbb{Z}
A_{1}\right)  \cdots\exp\left(  \epsilon%
\mathbb{Z}
A_{n_{2}}\right)  \right)  ^{-1}\exp\left(  \Lambda_{\epsilon}\right) \\
&  =\exp\left(  \epsilon\mathbb{Z}A_{n_{2}}\right)  \cdots\exp\left(
\epsilon\mathbb{Z}A_{1}\right)  \exp\left(  \sum_{j\in\boldsymbol{J}\left(
\lambda\right)  }\mathcal{L}^{\epsilon}%
\mathbb{Z}
X_{j}\right)  .
\end{align*}
Define the system
\begin{equation}
\mathcal{S}\left(  \mathbf{f}_{r,\epsilon},\Gamma^{\epsilon}\right)  =\left\{
\pi_{\lambda}\left(  \kappa\right)  \mathbf{f}_{r,\epsilon}:\kappa\in
\Gamma^{\epsilon}\right\}  \label{system}%
\end{equation}
and let $\Theta_{\lambda}\left(  \xi\right)  $ be the absolute value of the
determinant of the Jacobian of $\left[  \beta_{\boldsymbol{J}\left(
\lambda\right)  }|_{\mathcal{O}}\right]  ^{-1}.$
\end{enumerate}

\begin{theorem}
\label{Main} If $r\left(  \mathbf{e}\left(  {A}\right)  \right)  =\left(
\rho\left(  {A}\right)  \times\Theta_{\lambda}\left(  \beta_{\boldsymbol{J}%
\left(  \lambda\right)  }\left(  {A}\right)  \right)  \right)  ^{-1/2}$then
$\mathbf{f}_{r,\epsilon}$ is square-integrable with respect to the Haar
measure $d\mu_{M}.$ Moreover, the system $\mathcal{S}\left(  \mathbf{f}%
_{r,\epsilon},\Gamma^{\epsilon}\right)  $ is a tight frame for $L^{2}\left(
M,d\mu_{M}\right)  $ with frame bound $\left\vert \det\left(  \mathcal{L}%
^{\epsilon}\right)  \right\vert ^{-1}.$ Consequently, $\mathcal{S}\left(
\left\vert \det\left(  \mathcal{L}^{\epsilon}\right)  \right\vert
^{1/2}\mathbf{f}_{r,\epsilon},\Gamma^{\epsilon}\right)  $ is a Parseval frame
for $L^{2}\left(  M,d\mu_{M}\right)  .$
\end{theorem}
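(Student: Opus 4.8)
The plan is to exploit the product structure $\Gamma^\epsilon=(\Gamma_M^\epsilon)^{-1}\Gamma_P^\epsilon$ and separate the two families of generators: the ``translations'' indexed by $\Gamma_M^\epsilon$ and the ``modulations'' indexed by $\Gamma_P^\epsilon$. The frame identity will be reduced to Parseval's identity for a Fourier orthonormal basis on a fundamental domain (for the sum over $\Gamma_P^\epsilon$) followed by the tiling of $M$ (for the sum over $\Gamma_M^\epsilon$). First I would dispose of square-integrability: using $d\mu_M(\mathbf{e}(A))=\rho(A)\,dA$ together with $\Theta_\lambda(\beta_{\boldsymbol{J}(\lambda)}(A))=|\det\mathrm{Jac}_{\theta_\lambda}\text{-type Jacobian of }\beta_{\boldsymbol{J}(\lambda)}\text{ at }A|^{-1}$, the definition of $r$ gives $r(\mathbf{e}(A))^2\rho(A)=|\det\mathrm{Jac}\,\beta_{\boldsymbol{J}(\lambda)}(A)|$, whence $\|\mathbf{f}_{r,\epsilon}\|^2=\int_{\digamma_\epsilon}|\det\mathrm{Jac}\,\beta_{\boldsymbol{J}(\lambda)}(A)|\,dA=|\beta_{\boldsymbol{J}(\lambda)}(\digamma_\epsilon)|$, which is positive and finite, so $\mathbf{f}_{r,\epsilon}\in L^2(M,d\mu_M)$.

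The central computation treats one generator $\gamma=n^{-1}\exp V$ with $n=\mathbf{e}(\epsilon k)\in\Gamma_M^\epsilon$ and $V\in\Lambda_\epsilon$. Since $\mathfrak{p}$ is an ideal, $n^{-1}\exp V=\exp(\mathrm{Ad}(n^{-1})V)\,n^{-1}$, so (\ref{irre}) applies with $P$-part $\exp(\mathrm{Ad}(n^{-1})V)$ and $M$-part $n^{-1}$. In the integral defining $\langle\mathbf{h},\pi_\lambda(\gamma)\mathbf{f}_{r,\epsilon}\rangle$ I would substitute $m=n^{-1}s$; left-invariance of $d\mu_M$ together with the support of $\mathbf{f}_{r,\epsilon}$ restricts the integral to $s\in\Omega_\epsilon$. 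The key algebraic fact is the identity $\mathbf{C}(a)=\mathrm{Ad}(\mathbf{e}(a)^{-1})|_{\mathfrak{p}}$, from which one reads off the cocycle relation $\mathbf{C}(a(n^{-1}s))\,\mathbf{C}(\epsilon k)=\mathrm{Ad}(s^{-1})|_{\mathfrak{p}}=\mathbf{C}(b)$ for $s=\mathbf{e}(b)$. This makes the $a$-dependent modulation collapse, and because $V$ lies in the range of $\mathbf{P}_{\boldsymbol{J}(\lambda)}$ the exponent becomes exactly $\langle\beta_{\boldsymbol{J}(\lambda)}(b),V\rangle$. Passing to the frequency variable $\xi=\beta_{\boldsymbol{J}(\lambda)}(b)$ and using the definition of $r$ a second time to absorb both the density $\rho$ and the Jacobian $\Theta_\lambda$, the inner product becomes the $V$-th Fourier coefficient $\int G_n(\xi)\,e^{-2\pi i\langle\xi,V\rangle}\,d\xi$ of $G_n(\xi)=\mathbf{h}(n^{-1}\mathbf{e}(\beta_{\boldsymbol{J}(\lambda)}^{-1}(\xi)))\sqrt{\rho\,\Theta_\lambda}$, a function supported in $\beta_{\boldsymbol{J}(\lambda)}(\digamma_\epsilon)$.

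I would then invoke the fundamental-domain property of Lemma \ref{GammaP}: $\beta_{\boldsymbol{J}(\lambda)}(\digamma_\epsilon)$ sits inside a measurable fundamental domain of the dual lattice $\Lambda_\epsilon^\star$, so $\{e^{2\pi i\langle\cdot,V\rangle}:V\in\Lambda_\epsilon\}$, normalized by $\mathrm{covol}(\Lambda_\epsilon^\star)^{1/2}=|\det\mathcal{L}^\epsilon|^{-1/2}$, is an orthonormal basis of $L^2$ of that domain. Parseval's identity gives $\sum_{V\in\Lambda_\epsilon}|\langle\mathbf{h},\pi_\lambda(n^{-1}\exp V)\mathbf{f}_{r,\epsilon}\rangle|^2=|\det\mathcal{L}^\epsilon|^{-1}\|G_n\|_2^2$, and reversing the change of variables (the factors $\rho$ and $\Theta_\lambda$ cancel exactly) yields $\|G_n\|_2^2=\int_{n^{-1}\Omega_\epsilon}|\mathbf{h}|^2\,d\mu_M$ by left-invariance. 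Summing over $n\in\Gamma_M^\epsilon$ and applying the fact that $\{n^{-1}\Omega_\epsilon:n\in\Gamma_M^\epsilon\}$ partitions $M$ (Proposition \ref{tilesM}) collapses the total to $|\det\mathcal{L}^\epsilon|^{-1}\|\mathbf{h}\|^2$, which is the claimed tight frame identity; rescaling $\mathbf{f}_{r,\epsilon}$ by $|\det\mathcal{L}^\epsilon|^{1/2}$ then gives the Parseval frame. All interchanges of summation and integration are justified by Tonelli's theorem, every integrand being nonnegative.

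The hard part will be the bookkeeping in the second paragraph: verifying $\mathbf{C}(a)=\mathrm{Ad}(\mathbf{e}(a)^{-1})|_{\mathfrak{p}}$ and the resulting cocycle cancellation that converts the point-dependent modulation of (\ref{irre}) into a genuine character $e^{-2\pi i\langle\beta_{\boldsymbol{J}(\lambda)}(b),V\rangle}$ in the transported variable. Everything downstream—the precise cancellation of $\rho$ against $\Theta_\lambda$ forced by the choice of $r$, and the emergence of the constant $|\det\mathcal{L}^\epsilon|^{-1}$—depends on getting this step exactly right, so I would carry it out with particular care before assembling the two orthogonality/tiling arguments.
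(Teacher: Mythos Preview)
Your proposal is correct and follows essentially the same strategy as the paper: reduce the $\Gamma_P^\epsilon$-sum to Parseval's identity for exponentials on a fundamental domain of $\Lambda_\epsilon^\star$ via the change of variable $\xi=\beta_{\boldsymbol{J}(\lambda)}(A)$ (with the choice of $r$ cancelling the densities), then use the partition $\{n^{-1}\Omega_\epsilon\}$ from Proposition~\ref{tilesM} for the $\Gamma_M^\epsilon$-sum. The only organizational difference is that the paper first isolates a lemma on $L^2(\Omega_\epsilon,d\mu_M)$ for $\Gamma_P^\epsilon$ alone and then extends by unitarity of $\pi_\lambda(\gamma)$, whereas you perform the left-translation $m=n^{-1}s$ at the outset and track the cocycle identity $\mathbf{C}(\cdot)=\mathrm{Ad}(\mathbf{e}(\cdot)^{-1})|_{\mathfrak{p}}$ explicitly; your route makes the mechanism behind the $\Gamma_M^\epsilon$-step more transparent, while the paper's separation keeps the Parseval computation self-contained.
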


\subsubsection{Constructions of smooth frames of compact supports}

We shall now present an explicit construction of a frame $\mathcal{S}\left(
\mathbf{s},\Gamma\right)  $ such that $\mathbf{s}$ is smooth and compactly
supported. To this end, we proceed as follows.

\begin{enumerate}
\item We fix $\epsilon\in\mathbf{L}$. Next, define $\Omega_{\epsilon}^{\circ}$
to be an open subset of $M$ such that
\begin{equation}
\Omega_{\epsilon}^{\circ}=\exp\left(  \left(  -\frac{\epsilon}{2}%
,\frac{\epsilon}{2}\right)  A_{1}\right)  \cdots\exp\left(  \left(
-\frac{\epsilon}{2},\frac{\epsilon}{2}\right)  A_{n_{2}}\right)  .
\end{equation}
As observed above, the restriction of $\beta_{\boldsymbol{J}\left(
\lambda\right)  }$ to the open set
\begin{equation}
\mathcal{O}_{\epsilon}=\left(  -\frac{\epsilon}{2},\frac{\epsilon}{2}\right)
A_{1}+\cdots+\left(  -\frac{\epsilon}{2},\frac{\epsilon}{2}\right)  A_{n_{2}%
}\subset\mathfrak{m}%
\end{equation}
defines a diffeomorphism between $\mathcal{O}_{\epsilon}$ and $\beta
_{\boldsymbol{J}\left(  \lambda\right)  }\left(  \mathcal{O}_{\epsilon
}\right)  .$

\item We define the map
\[
\Phi_{\boldsymbol{J}\left(  \lambda\right)  }^{_{\epsilon}}:\Omega_{\epsilon
}^{\circ}\rightarrow\beta_{\boldsymbol{J}\left(  \lambda\right)  }\left(
\mathcal{O}_{\epsilon}\right)
\]
such that
\begin{equation}
\Phi_{\boldsymbol{J}\left(  \lambda\right)  }^{_{\epsilon}}\left(
\mathbf{e}\left(  a_{1}A_{1}+\cdots+a_{n_{2}}A_{n_{2}}\right)  \right)
=\beta_{\boldsymbol{J}\left(  \lambda\right)  }\left(  a_{1}A_{1}%
+\cdots+a_{n_{2}}A_{n_{2}}\right)  .
\end{equation}

\item Let $\mathbf{s}\in C_{c}^{\infty}\left(  M\right)  $ such that the
support of $\mathbf{s}$ is a compact subset of $\Omega_{\epsilon}^{\circ}.$
Then the support of the function $\mathbf{s}\circ\left[  \Phi_{\boldsymbol{J}%
\left(  \lambda\right)  }^{_{\epsilon}}\right]  ^{-1}$ is a compact subset
$\Sigma_{\mathbf{s}}$ of $\beta_{\boldsymbol{J}\left(  \lambda\right)
}\left(  \mathcal{O}_{\epsilon}\right)  .$ Put
\[
\Lambda_{\epsilon}=\sum_{j\in\boldsymbol{J}\left(  \lambda\right)  }%
\mathbb{Z}
\mathcal{L}^{\epsilon}X_{j}%
\]
and let
\begin{equation}
\Gamma_{P}^{\epsilon}=\exp\left(  \Lambda_{\epsilon}\right)  =\exp\left(
\sum_{j\in\boldsymbol{J}\left(  \lambda\right)  }%
\mathbb{Z}
\mathcal{L}^{\epsilon}X_{j}\right)
\end{equation}
be a discrete subset of $P$ such that $\Sigma_{\mathbf{s}}$ is contained in a
fundamental domain of a lattice
\begin{equation}
\Lambda_{\epsilon}^{\star}=\sum_{j\in\boldsymbol{J}\left(  \lambda\right)  }%
\mathbb{Z}
\left(  \mathcal{L}^{\epsilon}\right)  ^{\top}X_{j}^{\ast}.
\end{equation}

\item Let $d\xi$ be the canonical Lebesgue measure defined on $\beta
_{\boldsymbol{J}\left(  \lambda\right)  }\left(  \mathcal{O}_{\epsilon
}\right)  .$ Define the Radon-Nikodym derivative $\Psi_{\boldsymbol{J}\left(
\lambda\right)  }^{\epsilon}$ such that
\begin{equation}
\Psi_{\boldsymbol{J}\left(  \lambda\right)  }^{\epsilon}\left(  \xi\right)
d\xi=d\mu_{M}\left(  \left[  \Phi_{\boldsymbol{J}\left(  \lambda\right)
}^{_{\epsilon}}\right]  ^{-1}\left(  \xi\right)  \right)  .
\end{equation}
Then $\Psi_{\boldsymbol{J}\left(  \lambda\right)  }^{\epsilon}\left(
\xi\right)  $ is a positive smooth function defined on $\beta_{\boldsymbol{J}%
\left(  \lambda\right)  }\left(  \mathcal{O}_{\epsilon}\right)  $. Moreover,
it is proved in Proposition \ref{integrable} that the function
\[
m\mapsto\left(  \Psi_{\boldsymbol{J}\left(  \lambda\right)  }^{\epsilon
}\left(  \Phi_{\boldsymbol{J}\left(  \lambda\right)  }^{_{\epsilon}}\left(
m\right)  \right)  \right)  ^{-1}%
\]
is integrable with respect to the Haar measure on $M$ on any compact subset of
$\Omega_{\epsilon}^{\circ}.$ Furthermore, the auxiliary function $\Upsilon$
given by
\begin{equation}
\Upsilon\left(  m\right)  =\sqrt{\Psi_{\boldsymbol{J}\left(  \lambda\right)
}^{\epsilon}\left(  \Phi_{\boldsymbol{J}\left(  \lambda\right)  }^{_{\epsilon
}}\left(  m\right)  \right)  \left\vert \mathrm{\det}\text{ }\left(
\mathcal{L}^{\epsilon}\right)  ^{\top}\right\vert }\mathbf{s}\left(  m\right)
\end{equation}
is a smooth function which is compactly supported on $M.$ Consequently, there
exists a sufficiently dense discrete subset%
\begin{equation}
\Gamma_{M\text{ }}^{\epsilon}\left(  \mathbf{s}\right)  =\Gamma_{M\text{ }%
}^{\epsilon}\subset M\label{GammaM}%
\end{equation}
depending on the function $\mathbf{s}$ such that
\begin{equation}
\inf\left\{
{\displaystyle\sum\limits_{\gamma\in\Gamma_{M\text{ }}^{\epsilon}}}
\left\vert \Upsilon\left(  \gamma m\right)  \right\vert ^{2}:m\in M\right\}
=A_{\mathbf{s,}\Gamma_{M\text{ }}^{\epsilon},\Lambda^{\star}}>0
\end{equation}
and
\begin{equation}
\sup\left\{
{\displaystyle\sum\limits_{\gamma\in\Gamma_{M\text{ }}^{\epsilon}}}
\left\vert \Upsilon\left(  \gamma m\right)  \right\vert ^{2}:m\in M\right\}
=B_{\mathbf{s,}\Gamma_{M\text{ }}^{\epsilon},\Lambda^{\star}}<\infty.
\end{equation}
Finally, let
\[
\Gamma^{\epsilon}=\left(  \Gamma_{M\text{ }}^{\epsilon}\right)  ^{-1}%
\Gamma_{P}^{\epsilon}.
\]

\end{enumerate}

\begin{theorem}
\label{smooth frames}Let $\mathbf{s}$ and $\Gamma^{\epsilon}$ be as defined
above. Then the system $\mathcal{S}\left(  \mathbf{s},\Gamma^{\epsilon
}\right)  $ is a frame for $L^{2}\left(  M,\mu_{M}\right)  $ with optimal
lower and upper frame bounds $A_{\mathbf{s,}\Gamma_{M\text{ }}^{\epsilon
},\Lambda^{\star}}$ and $B_{\mathbf{s,}\Gamma_{M\text{ }}^{\epsilon}%
,\Lambda^{\star}}$ respectively.
\end{theorem}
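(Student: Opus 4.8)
The plan is to compute the quadratic form of the frame operator, $\sum_{\kappa\in\Gamma^{\epsilon}}|\langle\mathbf{h},\pi_{\lambda}(\kappa)\mathbf{s}\rangle|^{2}$, in closed form and show that it equals $\int_{M}|\mathbf{h}(m)|^{2}W(m)\,d\mu_{M}(m)$ for the multiplier $W(m)=\sum_{\gamma\in\Gamma_{M}^{\epsilon}}|\Upsilon(\gamma m)|^{2}$; the frame inequalities with the advertised bounds then follow immediately. First I would use the factorization $\Gamma^{\epsilon}=(\Gamma_{M}^{\epsilon})^{-1}\Gamma_{P}^{\epsilon}$ to write each $\kappa$ uniquely as $\gamma^{-1}p$ with $\gamma\in\Gamma_{M}^{\epsilon}$ and $p=\exp(\omega)\in\Gamma_{P}^{\epsilon}$, $\omega\in\Lambda_{\epsilon}$, and split the sum over these two indices. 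The map $(\gamma,p)\mapsto\gamma^{-1}p$ is injective because in $P\rtimes M$ one has $\gamma^{-1}p=(\gamma^{-1}p\gamma)\,\gamma^{-1}$, so the $M$-component recovers $\gamma$ and then the $P$-component recovers $p$; hence there is no overcounting.

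For a fixed $\gamma$, I would expand the inner product and, using left invariance of $d\mu_{M}$ (substituting $u=\gamma m$), obtain $\langle\mathbf{h},\pi_{\lambda}(\gamma^{-1}p)\mathbf{s}\rangle=\int_{M}\mathbf{h}(\gamma^{-1}u)\overline{\mathbf{s}(u)}\,e^{-2\pi i\langle\lambda,\log(u^{-1}pu)\rangle}\,d\mu_{M}(u)$. The crucial simplification is that for $u=\mathbf{e}(a)$ and $\omega$ in the range of $\mathbf{P}_{\boldsymbol{J}(\lambda)}$ one has $\langle\lambda,\log(u^{-1}pu)\rangle=\langle\mathbf{C}(a)^{\ast}\lambda,\omega\rangle=\langle\mathbf{P}_{\boldsymbol{J}(\lambda)}^{\ast}\mathbf{C}(a)^{\ast}\lambda,\omega\rangle=\langle\beta_{\boldsymbol{J}(\lambda)}(A),\omega\rangle$, so the modulation depends on $A$ only through $\beta_{\boldsymbol{J}(\lambda)}(A)$. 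Writing $\Phi:=\Phi_{\boldsymbol{J}(\lambda)}^{\epsilon}$ and $\Psi:=\Psi_{\boldsymbol{J}(\lambda)}^{\epsilon}$, and noting $\mathbf{s}$ is supported in $\Omega_{\epsilon}^{\circ}$ where $\beta_{\boldsymbol{J}(\lambda)}$ is a diffeomorphism, I would change variables to $\xi=\Phi(u)=\beta_{\boldsymbol{J}(\lambda)}(A)$ with $d\mu_{M}(u)=\Psi(\xi)\,d\xi$. This identifies the inner product with the $\omega$-th Fourier coefficient $\widehat{G_{\gamma}}(\omega)$ of the compactly supported function $G_{\gamma}(\xi)=\mathbf{h}(\gamma^{-1}\Phi^{-1}(\xi))\,\overline{\mathbf{s}(\Phi^{-1}(\xi))}\,\Psi(\xi)$, whose support is $\Sigma_{\mathbf{s}}$.

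The heart of the argument, which I expect to be the main obstacle, is a Parseval identity on the torus $\mathbf{P}_{\boldsymbol{J}(\lambda)}^{\ast}(\mathfrak{p}^{\ast})/\Lambda_{\epsilon}^{\star}$. Here $\Lambda_{\epsilon}=\sum_{j}\mathbb{Z}\,\mathcal{L}^{\epsilon}X_{j}$ is exactly the dual lattice of $\Lambda_{\epsilon}^{\star}=\sum_{j}\mathbb{Z}\,(\mathcal{L}^{\epsilon})^{\top}X_{j}^{\ast}$, since $\langle(\mathcal{L}^{\epsilon})^{\top}X_{j}^{\ast},\mathcal{L}^{\epsilon}X_{k}\rangle=\delta_{jk}$; consequently the characters $\{e^{2\pi i\langle\cdot,\omega\rangle}\}_{\omega\in\Lambda_{\epsilon}}$ form an orthogonal basis of $L^{2}$ of any fundamental domain of $\Lambda_{\epsilon}^{\star}$, of volume $|\det(\mathcal{L}^{\epsilon})^{\top}|=|\det\mathcal{L}^{\epsilon}|^{-1}$. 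Because $\Sigma_{\mathbf{s}}$ is contained in such a fundamental domain there is no aliasing, and Parseval yields $\sum_{\omega\in\Lambda_{\epsilon}}|\widehat{G_{\gamma}}(\omega)|^{2}=|\det\mathcal{L}^{\epsilon}|^{-1}\int|G_{\gamma}(\xi)|^{2}\,d\xi$. Undoing the change of variables and recognizing $|\det\mathcal{L}^{\epsilon}|^{-1}\Psi(\Phi(m))\,|\mathbf{s}(m)|^{2}=|\Upsilon(m)|^{2}$ gives $\sum_{p\in\Gamma_{P}^{\epsilon}}|\langle\mathbf{h},\pi_{\lambda}(\gamma^{-1}p)\mathbf{s}\rangle|^{2}=\int_{M}|\mathbf{h}(\gamma^{-1}m)|^{2}\,|\Upsilon(m)|^{2}\,d\mu_{M}(m)$. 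The delicate points are pinning down the normalizing constant so that it matches $|\det\mathcal{L}^{\epsilon}|^{-1}$, and verifying $G_{\gamma}\in L^{2}(\Sigma_{\mathbf{s}})$, which follows from the smoothness and compact support of $\mathbf{s}$ together with $\mathbf{h}\in L^{2}(M)$.

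Finally I would sum over $\gamma\in\Gamma_{M}^{\epsilon}$, substitute $m\mapsto\gamma m$ by left invariance once more, and interchange sum and integral by monotone convergence (all integrands are nonnegative) to arrive at $\sum_{\kappa\in\Gamma^{\epsilon}}|\langle\mathbf{h},\pi_{\lambda}(\kappa)\mathbf{s}\rangle|^{2}=\int_{M}|\mathbf{h}(m)|^{2}\,W(m)\,d\mu_{M}(m)$. Since the construction guarantees $A_{\mathbf{s},\Gamma_{M}^{\epsilon},\Lambda^{\star}}\le W(m)\le B_{\mathbf{s},\Gamma_{M}^{\epsilon},\Lambda^{\star}}$ for all $m$, the frame inequalities hold. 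For optimality I would observe that the frame operator acts as multiplication by the continuous function $W$ (continuity holds because $\Upsilon$ has compact support and $\Gamma_{M}^{\epsilon}$ is discrete, so the defining sum is locally finite); its spectrum is therefore $[\inf_{M}W,\sup_{M}W]$, and testing against vectors $\mathbf{h}$ supported in small neighborhoods of points where $W$ nears its infimum or supremum confirms that $A_{\mathbf{s},\Gamma_{M}^{\epsilon},\Lambda^{\star}}$ and $B_{\mathbf{s},\Gamma_{M}^{\epsilon},\Lambda^{\star}}$ are the optimal lower and upper frame bounds.
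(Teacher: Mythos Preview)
Your proposal is correct and follows essentially the same route as the paper: factor $\Gamma^{\epsilon}=(\Gamma_{M}^{\epsilon})^{-1}\Gamma_{P}^{\epsilon}$, use left invariance to shift the $\gamma$ into the argument of $\mathbf{h}$, pass to $\xi=\Phi_{\boldsymbol{J}(\lambda)}^{\epsilon}(u)$ via the Radon--Nikodym derivative $\Psi_{\boldsymbol{J}(\lambda)}^{\epsilon}$, apply Parseval for the exponential system indexed by $\Lambda_{\epsilon}$ on a fundamental domain of $\Lambda_{\epsilon}^{\star}$, and then undo the substitutions to obtain the multiplier $W(m)=\sum_{\gamma}|\Upsilon(\gamma m)|^{2}$. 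Your write-up is in fact a bit more careful than the paper's in two places: you justify the interchange of sum and integral via monotone convergence, and you supply an argument for the \emph{optimality} of the bounds (identifying the frame operator as multiplication by the continuous function $W$), which the paper asserts in the statement but does not spell out in the proof.
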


\section{A toy example}

In order to set the stage for the generalization to come, we shall present a
toy example which illustrates the core ideas of our scheme. Let $G=P\rtimes M$
be a simply connected, connected completely solvable Lie group with Lie
algebra spanned by $\left(  X_{1},X_{2},A_{1}\right)  $ such that
$P=\exp\left(
\mathbb{R}
X_{1}+%
\mathbb{R}
X_{2}\right)  $ and $M=\exp%
\mathbb{R}
A_{1}$ are commutative closed subgroups of $G$ and
\[
\left[  A_{1},X_{2}\right]  =X_{1},\left[  A_{1},X_{2}\right]  =X_{2}+X_{1}.
\]
Thus,
\begin{equation}
\left[  ad\left(  aA_{1}\right)  \right]  _{\left(  X_{1},X_{2}\right)
}=\left[
\begin{array}
[c]{cc}%
a & a\\
0 & a
\end{array}
\right]  \text{ and }e^{\left[  ad\left(  aA_{1}\right)  \right]  _{\left(
X_{1},X_{2}\right)  }}=\left[
\begin{array}
[c]{cc}%
e^{a} & ae^{a}\\
0 & e^{a}%
\end{array}
\right]  .
\end{equation}
Fixing $\lambda=X_{1}^{\ast}$, the orbital data corresponding to $\lambda$ is
equal to
\[
\mathfrak{Data}_{\left(  \pi_{\lambda},G\right)  }=\left\{  a\mapsto
\beta_{\left(  1\right)  }\left(  a\right)  =e^{-a},a\mapsto\beta_{\left(
2\right)  }\left(  a\right)  =-ae^{-a}\right\}  .
\]
In order to simplify our presentation, we identify $G$ with the semi-direct
product group $%
\mathbb{R}
^{2}\rtimes%
\mathbb{R}
$ with multiplication law given by
\[
\left(  x_{1},x_{2},a\right)  \left(  y_{1},y_{2},b\right)  =\left(
x_{1}+e^{a}\left(  y_{1}+ay_{2}\right)  ,x_{2}+e^{a}y_{2},a+b\right)  .
\]
Indeed, the identification above is valid since the mapping
\[
\left(  x_{1},x_{2},a\right)  \mapsto\exp\left(  x_{1}X_{1}+x_{2}X_{2}\right)
\exp\left(  aA_{1}\right)
\]
is a Lie group isomorphism. We realize the representation $\pi_{\lambda}$ as
acting in $L^{2}\left(
\mathbb{R}
\right)  $ as follows
\begin{equation}
\left[  \pi_{\lambda}\left(  x_{1},x_{2},a\right)  \mathbf{f}\right]  \left(
t\right)  =\exp\left(  2\pi i\left\langle \left[
\begin{array}
[c]{c}%
e^{-t}\\
-te^{-t}%
\end{array}
\right]  ,\left[
\begin{array}
[c]{c}%
x_{1}\\
x_{2}%
\end{array}
\right]  \right\rangle \right)  \mathbf{f}\left(  t-a\right)  .
\end{equation}
Referring back to the orbital data described previously, we fix
$\boldsymbol{J}\left(  \lambda\right)  =\left(  2\right)  $ and we define
\[
\Omega=\left[  -\frac{1}{2},\frac{1}{2}\right)  .
\]
Then%
\[
\left[  \pi_{\lambda}\left(  0,x_{2},a\right)  \mathbf{f}\right]  \left(
t\right)  =\exp\left(  2\pi i\beta_{\left(  2\right)  }\left(  t\right)
x_{2}\right)  \mathbf{f}\left(  t-a\right)  \text{ and }\beta_{\left(
2\right)  }\left(  t\right)  =-te^{-t}.
\]
The reader might find the following remarks instructive. First, the function
$\beta_{\left(  2\right)  }\left(  t\right)  =-te^{-t}$ does not define a
global diffeomorphism between $\mathbb{R}$ and its range. Indeed, from the
graph below, it is clear that the map $t\mapsto\beta_{\left(  2\right)
}\left(  t\right)  $ is not injective%

\[
\fbox{\text{\includegraphics[scale=0.5]{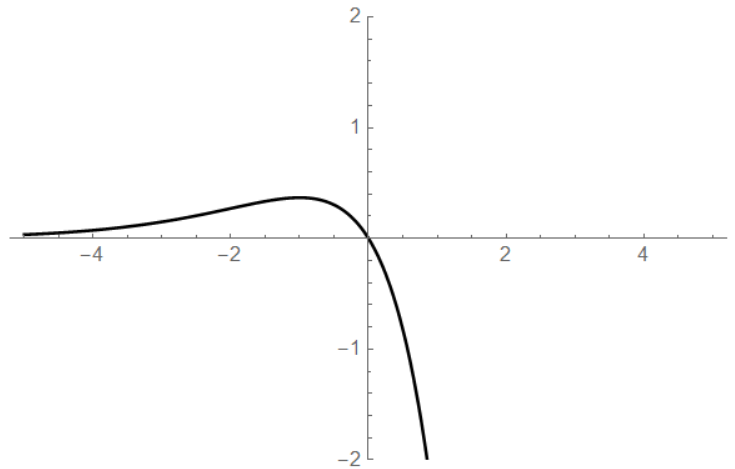}}}%
\]
However, since the derivative of $\beta_{\left(  2\right)  }\left(  t\right)
$ does not vanish at zero, there exists an open set $\mathcal{O}$ around zero
such that the restriction of $\beta_{\left(  2\right)  }$ to $\mathcal{O}$
defines a diffeomorphism between $\mathcal{O}$ and $\beta_{\left(  2\right)
}\left(  \mathcal{O}\right)  .$ Indeed,
\[
\beta_{\left(  2\right)  }|_{\left(  -\frac{1}{2},\frac{1}{2}\right)
}:\left(  -\frac{1}{2},\frac{1}{2}\right)  \rightarrow\beta_{\left(  2\right)
}\left(  \left(  -\frac{1}{2},\frac{1}{2}\right)  \right)
\]
is a diffeomorphism%
\[
\fbox{\text{\includegraphics[scale=0.5]{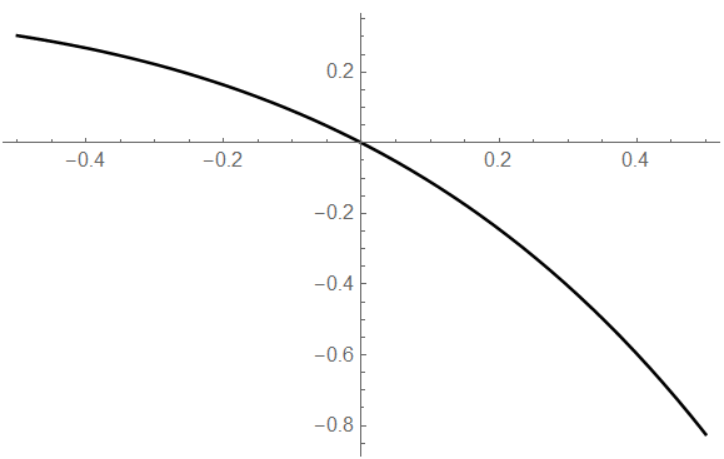}}}%
\]
Let $\mathbf{A}=\left(  -\frac{1}{2},\frac{1}{2}\right)  $ be the interior of
$\Omega.$ Secondly, let $\Theta_{\lambda}$ be defined as follows%
\begin{equation}
\Theta_{\lambda}\left(  \xi\right)  =\left\vert \left(  \frac{d\left[
\beta_{\left(  2\right)  }|_{\mathbf{A}}\right]  ^{-1}\left(  \xi\right)
}{d\xi}\right)  \left(  \xi\right)  \right\vert =\left\vert \frac{\left[
\beta_{\left(  2\right)  }|_{\mathbf{A}}\right]  ^{-1}\left(  \xi\right)
}{\xi\left(  1+\left[  \beta_{\left(  2\right)  }|_{\mathbf{A}}\right]
^{-1}\left(  \xi\right)  \right)  }\right\vert .
\end{equation}
It can be shown numerically that
\[
t\mapsto\sqrt{\dfrac{1}{\Theta_{\lambda}\left(  -te^{-t}\right)  }}\in
L^{2}\left(  \mathbf{A},dt\right)  .
\]
Note that
\[
\mathbf{B}=\beta_{\left(  2\right)  }\left(  \mathbf{A}\right)  =\left(
-\frac{1}{2\sqrt{e}},\frac{\sqrt{e}}{2}\right)
\]
and the Lebesgue measure of $\mathbf{B}$ which we denote by $\left\vert
\mathbf{B}\right\vert $ is equal to
\[
\frac{\sqrt{e}}{2}+\frac{1}{2\sqrt{e}}=\frac{1+e}{2\sqrt{e}}.
\]
Consequently, $\mathbf{B}$ is up to a null set a fundamental domain of the
lattice $\frac{1+e}{2\sqrt{e}}%
\mathbb{Z}
.$ Thus, the trigonometric system
\[
\left\{  \xi\mapsto\frac{\exp\left(  2\pi i\xi k\right)  }{\left(  \frac
{1+e}{2\sqrt{e}}\right)  ^{1/2}}:k\in\frac{2\sqrt{e}}{1+e}%
\mathbb{Z}
\right\}
\]
is an orthonormal basis for the Hilbert space $L^{2}\left(  \mathbf{B,}%
d\xi\right)  .$ Put $c=\frac{2\sqrt{e}}{1+e}$ and
\[
\mathbf{f}\left(  t\right)  =\frac{\mathbf{1}_{\Omega}\left(  t\right)
}{\left(  \Theta_{\lambda}\left(  \beta_{\left(  2\right)  }\left(  t\right)
\right)  \right)  ^{1/2}}=\frac{\mathbf{1}_{\Omega}\left(  t\right)  }%
{\sqrt{\Theta_{\lambda}\left(  -te^{-t}\right)  }}.
\]
Next, we shall prove that the system%
\[
\left\{  \frac{\mathbf{1}_{\Omega}\left(  t\right)  e^{2\pi i\left(
e^{-t}t\right)  k}}{\sqrt{\Theta_{\lambda}\left(  -te^{-t}\right)  }}:k\in c%
\mathbb{Z}
\right\}  =\left\{  \pi_{\lambda}\left(  \gamma\right)  \mathbf{f}:\gamma
\in\Gamma_{M\text{ }}\right\}
\]
is a tight frame for $L^{2}\left(  \Omega\right)  $ with frame bounds
$c^{-1}.$ Indeed, given $\mathbf{h}\in L^{2}\left(  \Omega\right)  $ we have
\[%
{\displaystyle\sum\limits_{k\in c\mathbb{Z}}}
\left\vert \left\langle \mathbf{h,}\pi_{\lambda}\left(  0,k,0\right)
\mathbf{f}\right\rangle _{L^{2}\left(  \Omega\right)  }\right\vert ^{2}=%
{\displaystyle\sum\limits_{k\in c\mathbb{Z}}}
\left\vert \int_{\mathbf{A}}\frac{\mathbf{h}\left(  t\right)  e^{2\pi i\left(
e^{-t}t\right)  k}}{\left(  \Theta_{\lambda}\left(  \beta_{\left(  2\right)
}\left(  t\right)  \right)  \right)  ^{1/2}}dt\right\vert ^{2}.
\]
The change of variable $t=\left(  \beta_{\boldsymbol{J}\left(  \lambda\right)
}\right)  ^{-1}\left(  \xi\right)  $ allows us to proceed as follows
\begin{align*}
&
{\displaystyle\sum\limits_{k\in c\mathbb{Z}}}
\left\vert \left\langle \mathbf{h,}\pi_{\lambda}\left(  0,k,0\right)
\mathbf{f}\right\rangle _{L^{2}\left(  \mathbf{A}\right)  }\right\vert ^{2}\\
&  =%
{\displaystyle\sum\limits_{k\in c\mathbb{Z}}}
\left\vert \int_{\mathbf{B}}\left[  \frac{\mathbf{h}\left(  \beta
_{\boldsymbol{J}\left(  \lambda\right)  }^{-1}\left(  \xi\right)  \right)
e^{-2\pi i\xi k}}{\left(  \Theta_{\lambda}\left(  \beta_{\left(  2\right)
}\left(  \beta_{\left(  2\right)  }^{-1}\left(  \xi\right)  \right)  \right)
\right)  ^{1/2}}\right]  d\left(  \left(  \beta_{\left(  2\right)  }\right)
^{-1}\left(  \xi\right)  \right)  \right\vert ^{2}\\
&  =%
{\displaystyle\sum\limits_{k\in c\mathbb{Z}}}
\left\vert \int_{\mathbf{B}}\left[  \frac{\mathbf{h}\left(  \beta_{\left(
2\right)  }^{-1}\left(  \xi\right)  \right)  e^{-2\pi i\xi k}\Theta_{\lambda
}\left(  \xi\right)  }{\left(  \Theta_{\lambda}\left(  \xi\right)  \right)
^{1/2}}\right]  d\xi\right\vert ^{2}\\
&  =%
{\displaystyle\sum\limits_{k\in c\mathbb{Z}}}
\left\vert \int_{\mathbf{B}}\left[  \frac{\mathbf{h}\left(  \beta_{\left(
2\right)  }^{-1}\left(  \xi\right)  \right)  \Theta_{\lambda}\left(
\xi\right)  }{\left(  \Theta_{\lambda}\left(  \xi\right)  \right)  ^{1/2}%
}\right]  e^{-2\pi i\xi k}d\xi\right\vert ^{2}\\
&  =%
{\displaystyle\sum\limits_{k\in c\mathbb{Z}}}
\left\vert \int_{\mathbf{B}}\left[  \frac{\left(  \frac{1+e}{2\sqrt{e}%
}\right)  ^{1/2}\mathbf{h}\left(  \beta_{\left(  2\right)  }^{-1}\left(
\xi\right)  \right)  \Theta_{\lambda}\left(  \xi\right)  }{\left(
\Theta_{\lambda}\left(  \xi\right)  \right)  ^{1/2}}\right]  \frac{e^{-2\pi
i\left(  \xi k\right)  }}{\left(  \frac{1+e}{2\sqrt{e}}\right)  ^{1/2}}%
d\xi\right\vert ^{2}.
\end{align*}
Put
\begin{equation}
\mathbf{H}\left(  \xi\right)  =\frac{\left(  \frac{1+e}{2\sqrt{e}}\right)
^{1/2}\mathbf{h}\left(  \beta_{\left(  2\right)  }^{-1}\left(  \xi\right)
\right)  \Theta_{\lambda}\left(  \xi\right)  }{\left(  \Theta_{\lambda}\left(
\xi\right)  \right)  ^{1/2}}.
\end{equation}
Then
\[%
{\displaystyle\sum\limits_{k\in c\mathbb{Z}}}
\left\vert \left\langle \mathbf{h,}\pi_{\lambda}\left(  0,k,0\right)
\mathbf{f}\right\rangle _{L^{2}\left(  \Omega\right)  }\right\vert ^{2}=%
{\displaystyle\sum\limits_{k\in c\mathbb{Z}}}
\left\vert \int_{\mathbf{B}}\mathbf{H}\left(  \xi\right)  \left(
\frac{e^{-2\pi i\xi k}}{\left(  \frac{1+e}{2\sqrt{e}}\right)  ^{1/2}}\right)
d\xi\right\vert ^{2}.
\]
Since
\[
\left\{  \xi\mapsto\frac{e^{-2\pi i\left(  \xi k\right)  }\mathbf{1}_{\left(
-\frac{1}{2\sqrt{e}},\frac{\sqrt{e}}{2}\right)  }}{\left(  \frac{1+e}%
{2\sqrt{e}}\right)  ^{1/2}}:k\in c%
\mathbb{Z}
\right\}
\]
is an orthonormal basis for $L^{2}\left(  \mathbf{B}\right)  $ it follows
that
\begin{align*}
&
{\displaystyle\sum\limits_{k\in c\mathbb{Z}}}
\left\vert \left\langle \mathbf{h,}\pi_{\lambda}\left(  0,k,0\right)
\mathbf{f}\right\rangle _{L^{2}\left(  \Omega\right)  }\right\vert ^{2}%
=\int_{\mathbf{B}}\left\vert \mathbf{H}\left(  \xi\right)  \right\vert
^{2}d\xi\\
&  =\int_{\mathbf{B}}\left\vert \frac{\left(  \frac{1+e}{2\sqrt{e}}\right)
^{1/2}\mathbf{h}\left(  \beta_{\left(  2\right)  }^{-1}\left(  \xi\right)
\right)  \Theta_{\lambda}\left(  \xi\right)  }{\left(  \Theta_{\lambda}\left(
\xi\right)  \right)  ^{1/2}}\right\vert ^{2}d\xi=\left(  \ast\right)
\end{align*}
and
\begin{align*}
\left(  \ast\right)   &  =c^{-1}\int_{\mathbf{B}}\left\vert \mathbf{h}\left(
\beta_{\left(  2\right)  }^{-1}\left(  \xi\right)  \right)  \left(
\Theta_{\lambda}\left(  \xi\right)  \right)  ^{1/2}\right\vert ^{2}d\xi\\
&  =c^{-1}\int_{\mathbf{B}}\left\vert \mathbf{h}\left(  \beta_{\left(
2\right)  }^{-1}\left(  \xi\right)  \right)  \right\vert ^{2}\underset
{=d\left(  \beta_{\left(  2\right)  }^{-1}\left(  \xi\right)  \right)
}{\underbrace{\Theta_{\lambda}\left(  \xi\right)  d\xi}}\\
&  =c^{-1}\int_{\mathbf{B}}\left\vert \mathbf{h}\left(  \beta_{\left(
2\right)  }^{-1}\left(  \xi\right)  \right)  \right\vert ^{2}d\left(
\beta_{\left(  2\right)  }^{-1}\left(  \xi\right)  \right)  .
\end{align*}
Next, the change of variable $\xi=\beta_{\boldsymbol{J}\left(  \lambda\right)
}\left(  t\right)  $ yields
\begin{align*}
&
{\displaystyle\sum\limits_{k\in c\mathbb{Z}}}
\left\vert \left\langle \mathbf{h,}\pi_{\lambda}\left(  0,k,0\right)
\mathbf{f}\right\rangle _{L^{2}\left(  \mathbf{A}\right)  }\right\vert ^{2}\\
&  =c^{-1}\int_{\mathbf{A}}\left\vert \mathbf{h}\left(  \beta_{\left(
2\right)  }^{-1}\left(  \beta_{\left(  2\right)  }\left(  t\right)  \right)
\right)  \right\vert ^{2}d\left(  \beta_{\left(  2\right)  }^{-1}\left(
\beta_{\left(  2\right)  }\left(  t\right)  \right)  \right)  \\
&  =c^{-1}\int_{\mathbf{A}}\left\vert \mathbf{h}\left(  t\right)  \right\vert
^{2}dt.
\end{align*}
As such,
\[
\left\{  \frac{\mathbf{1}_{\Omega}\left(  t\right)  e^{2\pi i\left(
e^{-t}t\right)  k}}{\left(  \Theta_{\lambda}\left(  \beta_{\left(  2\right)
}\left(  t\right)  \right)  \right)  ^{1/2}}:k\in c%
\mathbb{Z}
\right\}
\]
is a tight frame for $L^{2}\left(  \mathbf{A}\right)  $ with frame bounds
$c^{-1}=\left(  1+e\right)  \left(  2\sqrt{e}\right)  ^{-1}.$ Next, we observe
that
\[
\left\{  \left[  \frac{1}{2}-\ell,\frac{1}{2}-\ell\right)  :\ell\in%
\mathbb{Z}
\right\}
\]
is a measurable partition of the real line. Thus, the collection
\[
\left\{  t\mapsto\pi_{\lambda}\left(  \left(  0,0,\ell\right)  \left(
0,k,0\right)  \right)  \frac{\mathbf{1}_{\Omega}\left(  t\right)  }{\left(
\Theta_{\lambda}\left(  \beta_{\left(  2\right)  }\left(  t\right)  \right)
\right)  ^{1/2}}:k\in\frac{2\sqrt{e}}{1+e}%
\mathbb{Z}
,\ell\in\mathbb{Z}\right\}
\]
is a tight frame with frame bound $c^{-1}.$ In conclusion, the family of
vectors
\begin{equation}
\pi_{\lambda}\left(  \exp\left(
\mathbb{Z}
A_{1}\right)  \exp\left(  \left(  1+e\right)  ^{-1}\left(  2\sqrt{e}\right)
\mathbb{Z}
X_{2}\right)  \right)  \frac{\mathbf{1}_{\Omega}\left(  \cdot\right)
}{\left(  \Theta_{\lambda}\left(  \beta_{\left(  2\right)  }\left(
\cdot\right)  \right)  \right)  ^{1/2}}%
\end{equation}
is a tight frame with frame bound $\left(  1+e\right)  \left(  2\sqrt
{e}\right)  ^{-1}$ for $L^{2}\left(
\mathbb{R}
\right)  .$ 

\section{Intermediate results}

In order to help the reader keep track of the various objects introduced in
our scheme, we provide the following tables.
\[%
\begin{tabular}
[c]{|l|}\hline
$\theta_{\lambda}\left(
{\displaystyle\sum\limits_{k=1}^{n_{2}}}
a_{k}A_{k}\right)  =\mathbf{C}\left(  a\right)  ^{\ast}\lambda=\left(  \left.
\left(  e^{-\mathrm{ad}\left(  a_{n_{2}}A_{n_{2}}\right)  }\cdots
e^{-\mathrm{ad}\left(  a_{2}A_{2}\right)  }e^{-\mathrm{ad}\left(  a_{1}%
A_{1}\right)  }\right)  \right\vert _{\mathfrak{p}}\right)  ^{\ast}\lambda
$\\\hline
$D_{\theta_{\lambda}}=\mathrm{Jac}_{\theta_{\lambda}}\left(  0\right)  ,$
$\mathcal{T}=\left\{  \boldsymbol{I}:\boldsymbol{I}=\left(  j_{1}%
,\cdots,j_{n_{2}}\right)  \text{ and }1\leq j_{1}<\cdots<j_{n_{2}}\leq
n_{1}\right\}  $\\\hline
$\mathcal{A}=\left\{  D_{\theta_{\lambda}}\left(  \boldsymbol{I}\right)
:\boldsymbol{I}\in\mathcal{T}\text{ and }\mathrm{\det}\left(  D_{\theta
_{\lambda}}\left(  \boldsymbol{I}\right)  \right)  \neq0\right\}  $\\\hline
Fixing $\boldsymbol{J}\left(  \lambda\right)  =\left(  j_{1},\cdots,j_{n_{2}%
}\right)  \in\mathcal{T}$, \ $\mathbf{P}_{\boldsymbol{J}\left(  \lambda
\right)  }\left(  X_{k}\right)  =\left\{
\begin{array}
[c]{c}%
X_{k}\text{ if }k\in\boldsymbol{J}\left(  \lambda\right) \\
0\text{ if }k\notin\boldsymbol{J}\left(  \lambda\right)
\end{array}
\right.  $\\\hline
$\beta_{\boldsymbol{J}\left(  \lambda\right)  }\left(
{\displaystyle\sum\limits_{k=1}^{n_{2}}}
a_{k}A_{k}\right)  =\mathbf{P}_{\boldsymbol{J}\left(  \lambda\right)  }^{\ast
}\left(  \mathbf{C}\left(  a_{1},a_{2},\cdots,a_{n_{2}}\right)  \right)
^{\ast}\lambda$\\\hline
$\mathfrak{Data}_{\left(  \pi_{\lambda},G\right)  }=\left\{  \beta
_{\boldsymbol{J}\left(  \lambda\right)  }:D_{\theta_{\lambda}}\left(
\boldsymbol{J}\left(  \lambda\right)  \right)  \in\mathcal{A}\right\}  ,$
$\mathbf{L}=\left\{  s\in\left(  0,\infty\right)  :\sum_{k=1}^{n_{2}}\left[
-\frac{s}{2},\frac{s}{2}\right]  A_{k}\subset\mathcal{O}\right\}  $\\\hline
$\epsilon$ is a fixed positive number in $\mathbf{L,}$ $\Omega_{\epsilon}%
=\exp\left(  \left[  -\frac{\epsilon}{2},\frac{\epsilon}{2}\right)
A_{1}\right)  \cdots\exp\left(  \left[  -\frac{\epsilon}{2},\frac{\epsilon}%
{2}\right)  A_{n_{2}}\right)  $\\\hline
$\Gamma_{M\text{ }}^{\epsilon}=\exp\left(  \epsilon%
\mathbb{Z}
A_{1}\right)  \cdots\exp\left(  \epsilon%
\mathbb{Z}
A_{n_{2}}\right)  ,$ $\mathcal{O}_{\epsilon}=\sum_{k=1}^{n_{2}}\left(
-\frac{\epsilon}{2},\frac{\epsilon}{2}\right)  A_{k}\text{ and }\digamma
=\sum_{k=1}^{n_{2}}\left[  -\frac{\epsilon}{2},\frac{\epsilon}{2}\right)
A_{k}$\\\hline
$\mathbf{f}_{r,\epsilon}=r\times\mathbf{1}_{\Omega_{\epsilon}},$
$\mathbf{T}^{\epsilon}=\left\{
\begin{array}
[c]{c}%
\mathcal{T}:\mathcal{T}\text{ is a full-rank lattice of }\mathbf{P}%
_{\boldsymbol{J}\left(  \lambda\right)  }^{\ast}\left(  \mathfrak{p}^{\ast
}\right)  \text{ and }\\
\sum_{\kappa\in\mathcal{T}}\mathbf{1}_{\beta_{\boldsymbol{J}\left(
\lambda\right)  }\left(  \digamma_{\epsilon}\right)  }\left(  \xi
+\kappa\right)  \leq1\text{ for every }\xi\in\mathbf{P}_{\boldsymbol{J}\left(
\lambda\right)  }^{\ast}\left(  \mathfrak{p}^{\ast}\right)
\end{array}
\right\}  $\\\hline
$\left\vert \det\left(  \frac{id-e^{-\mathrm{ad}\left(  A\right)  }%
}{\mathrm{ad}\left(  A\right)  }\right)  \right\vert dA=w\left(  \nu\left(
A\right)  \right)  d\left(  \nu\left(  A\right)  \right)  =\rho\left(
A\right)  dA$\\\hline
$\Gamma_{P}^{\epsilon}=\exp\left(
\mathbb{Z}
\text{-span}\left\{  \mathcal{L}^{\epsilon}X_{j}:j\in\boldsymbol{J}\left(
\lambda\right)  \right\}  \right)  ,$ $\Lambda_{\epsilon}^{\star}=%
\mathbb{Z}
\text{-span}\left\{  \left(  \mathcal{L}^{\epsilon}\right)  ^{\top}X_{j}%
^{\ast}:j\in\boldsymbol{J}\left(  \lambda\right)  \right\}  \subset
\mathfrak{p}^{\ast}$\\\hline
$\Gamma^{\epsilon}=\left(  \Gamma_{M\text{ }}^{\epsilon}\right)  ^{-1}%
\Gamma_{P}^{\epsilon}=\exp\left(  \epsilon%
\mathbb{Z}
A_{n_{1}}\right)  \cdots\exp\left(  \epsilon%
\mathbb{Z}
A_{1}\right)  \exp\left(  \sum_{j\in\boldsymbol{J}\left(  \lambda\right)  }%
\mathbb{Z}
\mathcal{L}^{\epsilon}X_{j}\right)  ,$ $\Theta_{\lambda}\left(  \xi\right)
=\left\vert \det\mathrm{Jac}\left[  \beta_{\boldsymbol{J}\left(
\lambda\right)  }|_{\mathcal{O}}\right]  ^{-1}\right\vert $\\\hline
\end{tabular}
\ \ \ \ \text{ }%
\]
and
\[%
\begin{tabular}
[c]{|l|}\hline
Fix $\epsilon\in\mathbf{L,}$ $\Omega_{\epsilon}^{\circ}=\exp\left(  \left(
-\frac{\epsilon}{2},\frac{\epsilon}{2}\right)  A_{1}\right)  \cdots\exp\left(
\left(  -\frac{\epsilon}{2},\frac{\epsilon}{2}\right)  A_{n_{2}}\right)
$\\\hline
$\mathcal{O}_{\epsilon}=\left(  -\frac{\epsilon}{2},\frac{\epsilon}{2}\right)
A_{1}+\cdots+\left(  -\frac{\epsilon}{2},\frac{\epsilon}{2}\right)  A_{n_{2}%
}\subset\mathfrak{m,}$\\\hline
$\Phi_{\boldsymbol{J}\left(  \lambda\right)  }^{_{\epsilon}}\left(
\mathbf{e}\left(  a_{1}A_{1}+\cdots+a_{n_{2}}A_{n_{2}}\right)  \right)
=\beta_{\boldsymbol{J}\left(  \lambda\right)  }\left(  a_{1}A_{1}%
+\cdots+a_{n_{2}}A_{n_{2}}\right)  $\\\hline
$\mathbf{s}\in C_{c}^{\infty}\left(  M\right)  $ and supp$\left(
\mathbf{s}\circ\left[  \Phi_{\boldsymbol{J}\left(  \lambda\right)
}^{_{\epsilon}}\right]  ^{-1}\right)  $ is compact and contained in
$\beta_{\boldsymbol{J}\left(  \lambda\right)  }\left(  \mathcal{O}_{\epsilon
}\right)  $\\\hline
$\Lambda_{\epsilon}=\sum_{j\in\boldsymbol{J}\left(  \lambda\right)  }%
\mathbb{Z}
\mathcal{L}^{\epsilon}X_{j}$ and $\Lambda_{\epsilon}^{\star}=\sum
_{j\in\boldsymbol{J}\left(  \lambda\right)  }%
\mathbb{Z}
\left(  \mathcal{L}^{\epsilon}\right)  ^{\top}X_{j}^{\ast}$\\\hline
$\Gamma_{P}^{\epsilon}=\exp\left(  \Lambda_{\epsilon}\right)  =\exp\left(
\sum_{j\in\boldsymbol{J}\left(  \lambda\right)  }%
\mathbb{Z}
\mathcal{L}^{\epsilon}X_{j}\right)  ,$ $\Psi_{\boldsymbol{J}\left(
\lambda\right)  }^{\epsilon}\left(  \xi\right)  d\xi=d\mu_{M}\left(  \left[
\Phi_{\boldsymbol{J}\left(  \lambda\right)  }^{_{\epsilon}}\right]
^{-1}\left(  \xi\right)  \right)  $\\\hline
$\Upsilon\left(  m\right)  =\sqrt{\Psi_{\boldsymbol{J}\left(  \lambda\right)
}^{\epsilon}\left(  \Phi_{\boldsymbol{J}\left(  \lambda\right)  }^{_{\epsilon
}}\left(  m\right)  \right)  \left\vert \mathrm{\det}\text{ }\left(
\mathcal{L}^{\epsilon}\right)  ^{\top}\right\vert }\mathbf{s}\left(  m\right)
$\\\hline
$\Gamma_{M\text{ }}^{\epsilon}\left(  \mathbf{s}\right)  =\Gamma_{M\text{ }%
}^{\epsilon}\subset M$, with $\inf\left\{
{\displaystyle\sum\limits_{\gamma\in\Gamma_{M\text{ }}^{\epsilon}}}
\left\vert \Upsilon\left(  \gamma m\right)  \right\vert ^{2}:m\in M\right\}
>0$, $\sup\left\{
{\displaystyle\sum\limits_{\gamma\in\Gamma_{M\text{ }}^{\epsilon}}}
\left\vert \Upsilon\left(  \gamma m\right)  \right\vert ^{2}:m\in M\right\}
<\infty$\\\hline
\end{tabular}
\]

We recall that $G$ is a simply connected, connected semi-direct product group
of the type $G=P\rtimes M$ where $P,M$ are closed subgroups of $G.$ Moreover,
it is assumed that there exists a linear functional $\lambda$ in
$\mathfrak{p}^{\ast}$ such that the induced representation $\pi_{\lambda
}=\mathrm{ind}_{P}^{G}\left(  \chi_{\lambda}\right)  $ which is realized as
acting in $L^{2}\left(  M,d\mu_{M}\right)  $ is an irreducible representation
of $G.$ Next, appealing to Pukansky's condition, we know that $\pi_{\lambda}$
is irreducible if and only if

\begin{enumerate}
\item $\dim\mathfrak{p}=\dim\mathfrak{g-}\frac{1}{2}\dim\left(  \mathbf{O}%
_{\lambda}\right)  $

\item and the linear variety $\lambda+\mathfrak{p}^{\bot}$ is contained in
$\mathbf{O}_{\lambda}$ (the coadjoint orbit of $\lambda.$)
\end{enumerate}

\begin{definition}
A polarization algebra (or an ideal) in $\mathfrak{g}$ subordinated to a
linear functional $\lambda$ is a subalgebra (or an ideal) $\mathfrak{p}$ of
$\mathfrak{g}$ such that $\mathfrak{p}$ is subordinated to $\lambda$ and
\[
\dim\mathfrak{p}=\dim\mathfrak{g}-\frac{\dim\left(  \mathbf{O}_{\lambda
}\right)  }{2}.
\]

\end{definition}

\begin{lemma}
If $\pi_{\lambda}=\mathrm{ind}_{P}^{G}\left(  \chi_{\lambda}\right)  $ is
irreducible then $\mathfrak{p}$ is a polarization algebra which is an ideal
subordinated to the linear functional $\lambda.$
\end{lemma}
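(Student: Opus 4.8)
The plan is to check, one at a time, the three requirements packaged into the phrase ``polarization algebra which is an ideal subordinated to $\lambda$'': that $\mathfrak{p}$ is an ideal of $\mathfrak{g}$, that $[\mathfrak{p},\mathfrak{p}]\subseteq\ker\lambda$, and that $\dim\mathfrak{p}=\dim\mathfrak{g}-\frac{1}{2}\dim\mathbf{O}_{\lambda}$. Two of these are purely structural consequences of Condition \ref{cond}, while the dimension identity is the only place where the hypothesis of irreducibility genuinely enters, through Pukansky's criterion recalled above.

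First I would dispose of the ideal property. By Condition \ref{cond} we have $G=P\rtimes M$ with $P=\exp\mathfrak{p}$ the normal factor of the semidirect product, so $P$ is a closed, connected, normal subgroup of $G$. From $gPg^{-1}=P$ for all $g$, equivalently $\mathrm{Ad}(g)\mathfrak{p}=\mathfrak{p}$, I would specialize to $g=\exp(tZ)$ with $Z\in\mathfrak{g}$ and differentiate at $t=0$ to obtain $\mathrm{ad}(Z)\mathfrak{p}\subseteq\mathfrak{p}$, that is $[\mathfrak{g},\mathfrak{p}]\subseteq\mathfrak{p}$. Hence $\mathfrak{p}$ is an ideal, with no appeal to irreducibility.

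Next I would establish subordination. The representation $\pi_{\lambda}=\mathrm{ind}_{P}^{G}(\chi_{\lambda})$ is only defined once $\chi_{\lambda}(\exp X)=e^{2\pi i\lambda(X)}$ is a genuine unitary character of $P$, i.e.\ a homomorphism into the abelian group $U(1)$. As recalled in the preliminaries, this is exactly the statement that $\mathfrak{p}$ is subordinated to $\lambda$: for $X,Y\in\mathfrak{p}$ the Campbell--Baker--Hausdorff expansion of $\log(\exp X\exp Y)$ differs from $X+Y$ only by elements of $[\mathfrak{p},\mathfrak{p}]$, so multiplicativity of $\chi_{\lambda}$ forces $\lambda$ to take integer values on the bracket terms; since $[\mathfrak{p},\mathfrak{p}]$ is a real linear subspace, a functional taking only integer values on it must vanish there, giving $[\mathfrak{p},\mathfrak{p}]\subseteq\ker\lambda$. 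Viewing $\lambda$ as an element of $\mathfrak{g}^{\ast}$ extended by zero on $\mathfrak{m}$ leaves this computation unchanged, so $\mathfrak{p}$ is subordinated to $\lambda$ in $\mathfrak{g}$ as well.

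Finally, the polarization dimension follows from irreducibility: by Pukansky's criterion quoted above, the irreducibility of $\pi_{\lambda}$ forces condition (1), namely $\dim\mathfrak{p}=\dim\mathfrak{g}-\frac{1}{2}\dim\mathbf{O}_{\lambda}$. Combined with the subordination just established, this is precisely the defining pair of properties of a polarization algebra, and together with the ideal property this yields the claim. The lemma is thus mostly a matter of unwinding Condition \ref{cond} together with the irreducibility hypothesis; the one point I would treat with care is the reconciliation of the two viewpoints on $\lambda$, since the coadjoint orbit $\mathbf{O}_{\lambda}$ and Pukansky's dimension count live on $\mathfrak{g}^{\ast}$ whereas subordination and the character are most naturally read off on $\mathfrak{p}$, and one must confirm the zero-extension across $\mathfrak{m}$ makes these compatible.
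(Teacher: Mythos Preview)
Your argument is correct and in fact more informative than the paper's own proof, which simply cites Result~(32) on page~38 of Leptin--Ludwig \cite{Heptin} together with the normality of $P$ and declares the conclusion. You instead unpack the three ingredients directly from the material already assembled in the preliminaries: the ideal property from the semidirect product structure in Condition~\ref{cond}, subordination from the very fact that $\chi_\lambda$ is a character, and the dimension identity from Pukansky's criterion (item~(1) in the paper's recollection). This buys you a self-contained proof that does not send the reader to an external monograph, at the cost of a few extra lines; the paper's approach is terser but treats the lemma as essentially a lookup. Your closing remark about reconciling $\lambda\in\mathfrak{p}^\ast$ with its extension by zero to $\mathfrak{g}^\ast$ is a reasonable point of hygiene, though in this setting it is routine and the paper simply takes it for granted.
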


\begin{proof}
Let us assume that $\pi_{\lambda}$ is irreducible. Appealing to Result
$\left(  32\right)  $ on Page $38,$ \cite{Heptin}) together with the fact that
$P$ is assumed to be normal, it must be the case that $\mathfrak{p}%
=\log\left(  P\right)  $ is a polarization ideal for the linear functional
$\lambda.$
\end{proof}

\begin{lemma}
\label{Leptin}(\cite{Heptin} Page $43$) Assume that $\mathfrak{p}$ is a
polarization ideal subordinated to the linear functional $\lambda$. Then the
following are equivalent.

\begin{enumerate}
\item $\lambda+\mathfrak{p}^{\bot}\subset\mathbf{O}_{\lambda},$

\item $Ad^{\ast}P\left(  \lambda\right)  =\lambda+\mathfrak{p}^{\bot}%
=\lambda+\mathfrak{m}^{\ast},$

\item $\mathfrak{p}$ is a polarization ideal for all linear functionals
$\lambda+\lambda^{\prime}$ where $\lambda^{\prime}\in\mathfrak{m}^{\ast}.$
\end{enumerate}
\end{lemma}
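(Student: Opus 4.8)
The plan is to prove the three statements equivalent via the cycle $(1)\Rightarrow(3)\Rightarrow(2)\Rightarrow(1)$, with the analytic heart being a dimension count that forces each $P$-orbit inside the affine space $\lambda+\mathfrak{p}^{\bot}$ to be open. First I would record the elementary preliminaries. Let $B_{\lambda}\left(X,Y\right)=\left\langle \lambda,\left[X,Y\right]\right\rangle$ and let $\mathfrak{g}\left(\lambda\right)=\left\{X\in\mathfrak{g}:B_{\lambda}\left(X,\mathfrak{g}\right)=0\right\}$ be its radical, so that $\dim\mathbf{O}_{\lambda}=\dim\mathfrak{g}-\dim\mathfrak{g}\left(\lambda\right)$. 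A purely linear-algebra argument (a maximal isotropic subspace of a form of this dimension must contain the radical) shows that the polarization hypothesis $\dim\mathfrak{p}=\frac{1}{2}\left(\dim\mathfrak{g}+\dim\mathfrak{g}\left(\lambda\right)\right)$ forces $\mathfrak{g}\left(\lambda\right)\subseteq\mathfrak{p}$. I would also note that, because $\mathfrak{p}$ is an ideal subordinated to $\lambda$, the affine space $\lambda+\mathfrak{p}^{\bot}$ is $P$-invariant and $Ad^{\ast}P\left(\lambda\right)\subseteq\lambda+\mathfrak{p}^{\bot}$ (using $e^{-\mathrm{ad}X}Y\equiv Y$ modulo $\ker\lambda$ for $X,Y\in\mathfrak{p}$), and that \emph{every} $\mu\in\lambda+\mathfrak{p}^{\bot}$ is automatically subordinated to $\mathfrak{p}$, since $\mathfrak{p}^{\bot}$ annihilates $\left[\mathfrak{p},\mathfrak{p}\right]\subseteq\mathfrak{p}$.

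For $(1)\Rightarrow(3)$ I would argue as follows. Given $\lambda+\mathfrak{p}^{\bot}\subseteq\mathbf{O}_{\lambda}$, each $\mu\in\lambda+\mathfrak{p}^{\bot}$ can be written as $\mu=Ad^{\ast}\left(g^{-1}\right)\lambda$ for some $g\in G$. The equivariance identity $\mathfrak{g}\left(Ad^{\ast}\left(g^{-1}\right)\lambda\right)=Ad\left(g^{-1}\right)\mathfrak{g}\left(\lambda\right)$ then shows that $\dim\mathfrak{g}\left(\mu\right)=\dim\mathfrak{g}\left(\lambda\right)$, and moreover $\mathfrak{g}\left(\mu\right)=Ad\left(g^{-1}\right)\mathfrak{g}\left(\lambda\right)\subseteq Ad\left(g^{-1}\right)\mathfrak{p}=\mathfrak{p}$ because $\mathfrak{p}$ is an ideal and hence $G$-stable. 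Combined with the automatic subordination noted above, this says exactly that $\mathfrak{p}$ is a polarization ideal for $\mu$, which is $(3)$.

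For $(3)\Rightarrow(2)$ I would use connectedness. Under $(3)$ we have $\mathfrak{g}\left(\mu\right)\subseteq\mathfrak{p}$ at every $\mu\in\lambda+\mathfrak{p}^{\bot}$, so the tangent space to the $P$-orbit through $\mu$, namely $\left\{\mathrm{ad}^{\ast}\left(X\right)\mu:X\in\mathfrak{p}\right\}$, has dimension $\dim\mathfrak{p}-\dim\mathfrak{g}\left(\mu\right)=\frac{1}{2}\dim\mathbf{O}_{\lambda}=\dim\mathfrak{p}^{\bot}$. By the orbit–stabilizer (constant rank) theorem each $P$-orbit is therefore open in the $P$-invariant set $\lambda+\mathfrak{p}^{\bot}$. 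Since these open orbits partition the connected affine space $\lambda+\mathfrak{p}^{\bot}$, each orbit is clopen, so there is exactly one; that is $Ad^{\ast}P\left(\lambda\right)=\lambda+\mathfrak{p}^{\bot}=\lambda+\mathfrak{m}^{\ast}$, which is $(2)$. Finally $(2)\Rightarrow(1)$ is immediate, since $Ad^{\ast}P\left(\lambda\right)\subseteq Ad^{\ast}G\left(\lambda\right)=\mathbf{O}_{\lambda}$ closes the cycle.

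The step I expect to be the main obstacle is the openness claim in $(3)\Rightarrow(2)$: one must know that the $P$-orbits are honest immersed submanifolds of $\lambda+\mathfrak{p}^{\bot}$ with the expected tangent space, so that equality of dimensions genuinely yields openness. This is handled by the orbit–stabilizer theorem for the smooth $P$-action, and then the clopen/connectedness argument is elementary; the remaining ingredients (the inclusion $\mathfrak{g}\left(\lambda\right)\subseteq\mathfrak{p}$ and the equivariance of $\mathfrak{g}\left(\cdot\right)$ under $Ad^{\ast}$) are routine. Alternatively, if one prefers to avoid the smooth-orbit machinery, the same conclusion follows from the fact that for a simply connected completely solvable $G$ the coadjoint orbit $\mathbf{O}_{\lambda}$ is closed, so $Ad^{\ast}P\left(\lambda\right)$, being open in $\lambda+\mathfrak{p}^{\bot}$ and equal to $\mathbf{O}_{\lambda}\cap\left(\lambda+\mathfrak{p}^{\bot}\right)$ under $(1)$, is also closed and hence all of $\lambda+\mathfrak{p}^{\bot}$.
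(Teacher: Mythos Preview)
The paper does not prove this lemma at all; it simply cites it as a known result from Leptin--Ludwig \cite{Heptin}, page~43, and moves on. Your proof is correct and is essentially the standard argument one finds in that reference: the dimension count in $(3)\Rightarrow(2)$ showing that each $P$-orbit in $\lambda+\mathfrak{p}^{\bot}$ is open, followed by the connectedness argument forcing a single orbit, is exactly the classical route, and your handling of $(1)\Rightarrow(3)$ via the equivariance $\mathfrak{g}(Ad^{\ast}(g^{-1})\lambda)=Ad(g^{-1})\mathfrak{g}(\lambda)$ together with the $G$-stability of the ideal $\mathfrak{p}$ is clean.
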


\begin{lemma}
The irreducibility of $\pi_{\lambda}$ implies that $M$ acts freely on the
linear functional $\lambda.$
\end{lemma}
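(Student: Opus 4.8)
The plan is to reduce the freeness of the $M$-action on $\lambda\in\mathfrak{p}^{\ast}$ to the single inclusion $G_{\lambda}\subseteq P$, where $G_{\lambda}$ denotes the full coadjoint stabilizer of an extension of $\lambda$. First I would fix the extension $\widehat{\lambda}\in\mathfrak{g}^{\ast}$ of $\lambda$ that vanishes on $\mathfrak{m}$, so that $\widehat{\lambda}+\mathfrak{p}^{\bot}$ is exactly the affine space of all functionals on $\mathfrak{g}$ restricting to $\lambda$ on $\mathfrak{p}$. Since $P$ is normal, $Ad(m)$ preserves $\mathfrak{p}$ for every $m\in M$, so $M$ acts on $\mathfrak{p}^{\ast}$ via $(Ad(m^{-1})|_{\mathfrak{p}})^{\ast}$, and the statement that $m$ stabilizes $\lambda$ means $(Ad(m)|_{\mathfrak{p}})^{\ast}\lambda=\lambda$. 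Restricting the coadjoint image $Ad(m^{-1})^{\ast}\widehat{\lambda}$ back to $\mathfrak{p}$ then reproduces this same functional $\lambda=\widehat{\lambda}|_{\mathfrak{p}}$, so $Ad(m^{-1})^{\ast}\widehat{\lambda}-\widehat{\lambda}$ kills $\mathfrak{p}$, i.e. $Ad(m^{-1})^{\ast}\widehat{\lambda}\in\widehat{\lambda}+\mathfrak{p}^{\bot}$.

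Next I would assemble the two structural facts that convert this into a global conclusion. By the preceding lemma $\mathfrak{p}$ is a polarization ideal subordinated to $\lambda$; and it is standard that the radical $\mathfrak{g}(\lambda)=\{X\in\mathfrak{g}:\lambda([X,\mathfrak{g}])=0\}$ of the alternating form $(X,Y)\mapsto\lambda([X,Y])$ is contained in every polarization at $\lambda$, so $\mathfrak{g}(\lambda)\subseteq\mathfrak{p}$. Because $G$ is completely solvable, $\exp$ is a global diffeomorphism and the coadjoint stabilizer $G_{\widehat{\lambda}}$ is connected, whence $G_{\widehat{\lambda}}=\exp\mathfrak{g}(\lambda)\subseteq\exp\mathfrak{p}=P$. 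Finally, Lemma \ref{Leptin}(2) identifies $\widehat{\lambda}+\mathfrak{p}^{\bot}$ with the full $P$-orbit $Ad^{\ast}(P)\widehat{\lambda}$ of $\widehat{\lambda}$.

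Combining these, the membership $Ad(m^{-1})^{\ast}\widehat{\lambda}\in\widehat{\lambda}+\mathfrak{p}^{\bot}=Ad^{\ast}(P)\widehat{\lambda}$ yields some $p_{0}\in P$ with $Ad(m^{-1})^{\ast}\widehat{\lambda}=Ad(p_{0}^{-1})^{\ast}\widehat{\lambda}$; since the coadjoint map is a genuine action, $p_{0}^{-1}m$ fixes $\widehat{\lambda}$, so $p_{0}^{-1}m\in G_{\widehat{\lambda}}\subseteq P$, and therefore $m\in P$. As $G=P\rtimes M$ is an internal semidirect product, $P\cap M=\{e\}$, forcing $m=e$; hence the stabilizer of $\lambda$ in $M$ is trivial and the action is free. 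The one delicate point I would flag is the passage from an infinitesimally trivial (hence discrete) stabilizer to a genuinely trivial one: this is precisely where complete solvability is essential, since it guarantees the connectedness of $G_{\widehat{\lambda}}$ and thus the inclusion $G_{\widehat{\lambda}}\subseteq P$, which together with $P\cap M=\{e\}$ eliminates any stray discrete components that a non-completely-solvable group could otherwise produce.
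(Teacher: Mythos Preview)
Your argument is correct, but it differs substantially from the paper's. The paper proceeds by dimension counting: from the polarization dimension formula it deduces $\dim\mathbf{O}_{\lambda}=2\dim\mathfrak{m}$, and since Lemma~\ref{Leptin} gives $\dim\left(Ad^{\ast}P(\lambda)\right)=\dim\mathfrak{m}$, the $M$-orbit of $\lambda$ in $\mathfrak{p}^{\ast}$ must also have dimension $\dim\mathfrak{m}$. This forces the stabilizer in $M$ to be discrete, and the paper then invokes the real-triangular form of $Ad^{\ast}(m)$ (from complete solvability) to argue that no non-trivial discrete stabilizer can occur. Your route is structural rather than dimensional: you show directly that any $m\in M$ fixing $\lambda$ must lie in $P\cdot G_{\widehat{\lambda}}\subseteq P$, using Lemma~\ref{Leptin}(2) to identify $\widehat{\lambda}+\mathfrak{p}^{\bot}$ with the $P$-orbit and the inclusion $\mathfrak{g}(\widehat{\lambda})\subseteq\mathfrak{p}$ together with connectedness of $G_{\widehat{\lambda}}$ to place the full stabilizer inside $P$; the semidirect product condition $P\cap M=\{e\}$ then finishes it. Your approach has the advantage of making the ``discrete $\Rightarrow$ trivial'' step fully explicit---the paper's triangularization remark is correct in spirit but somewhat terse on exactly why a discrete stabilizer in a simply connected completely solvable group must be trivial---whereas the paper's dimension argument is more elementary in that it avoids the general fact about connectedness of coadjoint stabilizers in exponential groups. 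One small notational point: when you write $\mathfrak{g}(\lambda)$ and the form $(X,Y)\mapsto\lambda([X,Y])$ on $\mathfrak{g}$, you mean $\widehat{\lambda}$ rather than $\lambda\in\mathfrak{p}^{\ast}$; this is the same conflation the paper makes, but it is worth keeping straight.
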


\begin{proof}
Since $\pi_{\lambda}$ is irreducible, then $\lambda+\mathfrak{p}^{\bot}%
\subset\mathbf{O}_{\lambda}.$ Next,
\[
\dim\mathfrak{p}=\dim\mathfrak{g}-\frac{\dim\left(  \mathbf{O}_{\lambda
}\right)  }{2}.
\]
Note that
\begin{align*}
\left.  \dim\mathfrak{p}=\dim\mathfrak{g}-\frac{\dim\left(  \mathbf{O}%
_{\lambda}\right)  }{2}\right.   &  \Rightarrow-\frac{\dim\left(
\mathbf{O}_{\lambda}\right)  }{2}=\dim\mathfrak{p}-\dim\mathfrak{g}\\
&  \Rightarrow\frac{1}{2}\dim\left(  \mathbf{O}_{\lambda}\right)
=\dim\mathfrak{g}-\dim\mathfrak{p}\\
&  \Rightarrow\frac{1}{2}\dim\left(  \mathbf{O}_{\lambda}\right)
=\dim\mathfrak{m}\\
&  \Rightarrow\dim\mathbf{O}_{\lambda}=2\dim\mathfrak{m.}%
\end{align*}
Thus, the coadjoint orbit of $\lambda$ is a $2\times\dim\mathfrak{m}%
$-dimensional manifold. Since $\dim\left(  Ad^{\ast}P\left(  \lambda\right)
\right)  =\dim\mathfrak{m}$ it is now clear that
\[
\dim\left(  Ad^{\ast}M\left(  \lambda\right)  \right)  =\dim\left(
\mathfrak{m}\right)  =\dim\left(  \theta_{\lambda}\left(  \mathfrak{m}\right)
\right)  .
\]
Next, since $M$ is a completely solvable Lie group, there exists a basis for
$\mathfrak{p}$ such that a matrix representation of $Ad^{\ast}\left(
m\right)  $ (which we denote by $\left[  Ad^{\ast}\left(  m\right)  \right]
$) is a lower triangular matrix with only real eigenvalues. As such,
\[
\left\{  a\in%
\mathbb{R}
^{n_{2}}:\left(  \left.  \left(  e^{-\mathrm{ad}\left(  a_{n_{2}}A_{n_{2}%
}\right)  }\cdots e^{-\mathrm{ad}\left(  a_{2}A_{2}\right)  }e^{-\mathrm{ad}%
\left(  a_{1}A_{1}\right)  }\right)  \right\vert _{\mathfrak{p}}\right)
^{\ast}\lambda=\lambda\right\}
\]
cannot be a non-trivial subgroup discrete subgroup (or a lattice) of $%
\mathbb{R}
^{n_{2}}.$ Finally, since
\[
\dim\left(  Ad^{\ast}P\left(  \lambda\right)  \right)  =\dim\left(
\mathfrak{m}\right)
\]
it must then be the case that $M$ acts freely on $\lambda.$
\end{proof}

Define the map $\Phi_{\lambda}:M\rightarrow Ad\left(  M\right)  ^{\ast}%
\lambda\subset\mathfrak{p}^{\ast}$such that
\[
\Phi_{\lambda}\left(  m\right)  =Ad\left(  m\right)  ^{\ast}\lambda.
\]

\begin{lemma}
$\Phi_{\lambda}$ is a smooth map that has constant rank. Moreover,
$m\mapsto\Phi_{\lambda}\left(  m\right)  $ is an equivariant diffeomorphism
and the rank of $\Phi_{\lambda}$ is equal to the dimension of $M.$
\end{lemma}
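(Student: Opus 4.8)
The plan is to treat $\Phi_{\lambda}$ as the orbit map of the coadjoint action of $M$ and to read off all four assertions from a single equivariance identity together with the freeness established above. First I would record smoothness and equivariance. Since $\mathfrak{p}$ is an ideal, $Ad(m)\mathfrak{p}=\mathfrak{p}$ for every $m\in M$, so $Ad(m)^{\ast}$ is a well-defined linear automorphism of $\mathfrak{p}^{\ast}$ and $\Phi_{\lambda}(m)=Ad(m)^{\ast}\lambda$ is meaningful. In the coordinates $m=\mathbf{e}(a)$ the map is a finite composition of matrix exponentials applied to $\lambda$, a variant of $\theta_{\lambda}$, hence real-analytic and in particular smooth. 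Equivariance is the elementary identity
\[
\Phi_{\lambda}(mm')=Ad(mm')^{\ast}\lambda=Ad(m')^{\ast}Ad(m)^{\ast}\lambda=Ad(m')^{\ast}\Phi_{\lambda}(m),
\]
so that $\Phi_{\lambda}\circ R_{m'}=Ad(m')^{\ast}\circ\Phi_{\lambda}$, where $R_{m'}(m)=mm'$ denotes right translation.

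Next I would deduce constant rank. For fixed $m'\in M$ the right translation $R_{m'}$ is a diffeomorphism of $M$, and $\xi\mapsto Ad(m')^{\ast}\xi$ is a linear isomorphism of $\mathfrak{p}^{\ast}$, hence a diffeomorphism. Differentiating the equivariance identity gives
\[
(d\Phi_{\lambda})_{mm'}\circ(dR_{m'})_{m}=Ad(m')^{\ast}\circ(d\Phi_{\lambda})_{m},
\]
and since both outer maps are isomorphisms, $\mathrm{rank}\,(d\Phi_{\lambda})_{mm'}=\mathrm{rank}\,(d\Phi_{\lambda})_{m}$. Taking $m$ to be the identity and letting $m'$ range over $M$ shows that the rank at every point equals the rank at the identity; thus $\Phi_{\lambda}$ has constant rank.

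Then I would pin down the rank and injectivity. Because $M$ acts freely on $\lambda$ (established above), the stabilizer of $\lambda$ in $M$ is trivial; if $\Phi_{\lambda}(m_{1})=\Phi_{\lambda}(m_{2})$ then $Ad(m_{1}m_{2}^{-1})^{\ast}\lambda=\lambda$, forcing $m_{1}=m_{2}$, so $\Phi_{\lambda}$ is injective. A smooth map of constant rank that is injective cannot drop rank: by the constant rank theorem it is locally equivalent to a linear projection, and a projection of rank strictly less than $\dim M$ is locally non-injective. Hence the constant rank equals $\dim M$, so $\Phi_{\lambda}$ is an injective immersion whose image is the $\dim\mathfrak{m}$-dimensional orbit $Ad(M)^{\ast}\lambda$ already identified above.

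Finally, to upgrade this injective immersion to a diffeomorphism onto $Ad(M)^{\ast}\lambda$, I would pass to the homogeneous space: since the stabilizer $M_{\lambda}$ is trivial, the orbit map factors through $M/M_{\lambda}\cong M$ and the induced equivariant map onto the orbit is a diffeomorphism for the canonical smooth structure of the orbit. The one genuinely delicate point — the main obstacle — is that an injective immersion need not be an embedding (the orbit could a priori fail to be homeomorphic to $M$, as for an irrational winding), so equivariance and the rank count alone do not suffice. Here this pathology is excluded by complete solvability: the coadjoint orbits of a completely solvable Lie group are closed, whence the orbit map is proper and therefore a homeomorphism, and so a diffeomorphism, onto its image. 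Invoking this closedness (equivalently, properness of $\Phi_{\lambda}$) lets me conclude that $\Phi_{\lambda}$ is an equivariant diffeomorphism onto $Ad(M)^{\ast}\lambda$ of rank $\dim M$.
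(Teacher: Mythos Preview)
Your proof is correct and follows essentially the same approach as the paper: both establish the equivariance identity for $\Phi_{\lambda}$ and combine it with triviality of the stabilizer, the paper by directly citing Lee's Equivariant Rank Theorem (Theorem~7.5) and Theorem~7.19, you by unpacking those arguments by hand. One small caveat: the closedness result you invoke at the end is stated for \emph{coadjoint} orbits, but here $M$ is acting linearly on $\mathfrak{p}^{\ast}$ rather than on $\mathfrak{m}^{\ast}$, so the relevant fact is the more general one that orbits of an exponential solvable group under a linear representation are locally closed (equivalently, the orbit map is proper onto its image); your conclusion is unaffected.
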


\begin{proof}
For any given $n\in M,$ we have
\[
\Phi_{\lambda}\left(  nm\right)  =Ad\left(  nm\right)  ^{\ast}\lambda
=Ad\left(  n\right)  ^{\ast}Ad\left(  m\right)  ^{\ast}\lambda=Ad\left(
n\right)  ^{\ast}\Phi_{\lambda}\left(  m\right)  .
\]
Thus, the map $\Phi_{\lambda}$ is equivariant with respect to the
multiplication action of $M$ on itself and the coadjoint action of $M$ on
$Ad\left(  M\right)  ^{\ast}\lambda\mathfrak{.}$ Additionally, since the
multiplicative action of $M$ on itself is transitive, according to the
Equivariant Rank Theorem (see \cite{Lee}, Theorem $7.5$), $\Phi_{\lambda}$ is
a smooth map that has constant rank. Next, appealing to \cite{Lee}, Theorem
$7.19,$ since the isotropy group of $\lambda$ is trivial by assumption, the
map $m\mapsto\Phi_{\lambda}\left(  m\right)  $ is an equivariant
diffeomorphism and the rank of this map must be equal to the dimension of $M.$
\end{proof}

\vskip0.5cm \noindent Next, we define the map $\mathbf{e}:\mathfrak{m}%
\rightarrow M$ by
\begin{equation}
\mathbf{e}\left(
{\displaystyle\sum\limits_{k=1}^{n_{2}}}
a_{k}A_{k}\right)  =\exp\left(  a_{1}A_{1}\right)  \cdots\exp\left(  a_{n_{2}%
}A_{n_{2}}\right)  =\exp\left(  \nu\left(
{\displaystyle\sum\limits_{k=1}^{n_{2}}}
a_{k}A_{k}\right)  \right)
\end{equation}

Then $\mathbf{e}$ is bijective and bi-analytic. Next, its inverse defines a
global coordinate system on $G$ (see Chapter $1,$ \cite{Heptin})

\begin{lemma}
$\theta_{\lambda}=\Phi_{\lambda}\circ\mathbf{e.}$
\end{lemma}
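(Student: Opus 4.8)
The plan is to prove the identity by a direct unwinding of the three definitions involved, the whole matter reducing to the observation that the linear map $\mathbf{C}(a)$ is nothing other than the restriction of $\mathrm{Ad}(\mathbf{e}(A)^{-1})$ to the ideal $\mathfrak{p}$. This link was in fact already produced when the action of $\pi_{\lambda}$ was computed: writing $m=\mathbf{e}(A)=\exp(a_{1}A_{1})\cdots\exp(a_{n_{2}}A_{n_{2}})$ and $p=\exp X$ with $X\in\mathfrak{p}$, one has $m^{-1}pm=\exp(\mathrm{Ad}(m^{-1})X)$, so that $\log(m^{-1}pm)=\mathrm{Ad}(m^{-1})X$, and this is exactly the vector to which $\lambda$ was applied in the formula defining $\mathbf{Q}(a,\lambda,X)$.

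First I would compute $\mathrm{Ad}(m^{-1})$ explicitly. Since $m^{-1}=\exp(-a_{n_{2}}A_{n_{2}})\cdots\exp(-a_{1}A_{1})$, the multiplicativity of $\mathrm{Ad}$ together with $\mathrm{Ad}(\exp Y)=e^{\mathrm{ad}(Y)}$ gives $\mathrm{Ad}(m^{-1})=e^{-\mathrm{ad}(a_{n_{2}}A_{n_{2}})}\cdots e^{-\mathrm{ad}(a_{1}A_{1})}$. Because $\pi_{\lambda}$ is irreducible, the earlier lemma guarantees that $\mathfrak{p}=\log P$ is an ideal; hence each $\mathrm{Ad}(\exp(a_{k}A_{k}))$, and therefore $\mathrm{Ad}(m^{-1})$, maps $\mathfrak{p}$ into itself, and restriction to $\mathfrak{p}$ is compatible with composition. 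Restricting, we obtain precisely $\left.\mathrm{Ad}(m^{-1})\right\vert_{\mathfrak{p}}=\mathbf{C}(a)$.

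It then remains to dualize. Adopting the coadjoint convention used throughout the paper --- the same convention under which the orbit is written $\mathbf{O}_{\lambda}=\{\mathrm{Ad}(x^{-1})^{\ast}\lambda\}$ and under which the equivariance computation $\Phi_{\lambda}(nm)=\mathrm{Ad}(n)^{\ast}\Phi_{\lambda}(m)$ was carried out --- the functional $\Phi_{\lambda}(m)=\mathrm{Ad}(m)^{\ast}\lambda$ acts on $X\in\mathfrak{p}$ by $\langle\Phi_{\lambda}(m),X\rangle=\langle\lambda,\mathrm{Ad}(m^{-1})X\rangle$. Substituting the computation of the previous step, $\langle\Phi_{\lambda}(\mathbf{e}(A)),X\rangle=\langle\lambda,\mathbf{C}(a)X\rangle=\langle\mathbf{C}(a)^{\ast}\lambda,X\rangle=\langle\theta_{\lambda}(A),X\rangle$ for every $X\in\mathfrak{p}$, and since both sides lie in $\mathfrak{p}^{\ast}$ this proves $\Phi_{\lambda}\circ\mathbf{e}=\theta_{\lambda}$.

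The genuinely delicate point is purely bookkeeping: the coadjoint action introduces both an order reversal and the passage from $m$ to $m^{-1}$, and one must make sure the two cancellations --- the reversal of the product $\exp(a_{1}A_{1})\cdots\exp(a_{n_{2}}A_{n_{2}})$ upon inversion, and the reversal again upon transposition --- are tracked consistently with the sign convention $e^{-\mathrm{ad}}$ appearing in $\mathbf{C}(a)$. I would also be careful to record once and for all that the restriction of the coadjoint action to $\mathfrak{p}^{\ast}$ is well defined; this is the only place where the ideal property of $\mathfrak{p}$ is actually used, after which the equality is a one-line verification.
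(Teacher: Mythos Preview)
Your proposal is correct and follows the same approach as the paper: both prove the identity by direct unwinding of the definitions, the key observation being that $\mathbf{C}(a)=\left.\mathrm{Ad}(\mathbf{e}(A)^{-1})\right\vert_{\mathfrak{p}}$. The paper's proof is a bare one-line chain of equalities, while you spell out explicitly the computation of $\mathrm{Ad}(m^{-1})$, the coadjoint convention needed to make $\Phi_{\lambda}(m)=\mathrm{Ad}(m)^{\ast}\lambda$ pair with $\mathrm{Ad}(m^{-1})$, and the role of $\mathfrak{p}$ being an ideal; these are exactly the bookkeeping points the paper suppresses, so your version is simply a more careful rendition of the same argument.
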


\begin{proof}
Given $a_{1},\cdots,a_{n_{2}}\in%
\mathbb{R}
,$%
\begin{align*}
\Phi_{\lambda}\left(  \mathbf{e}\left(
{\displaystyle\sum\limits_{k=1}^{n_{2}}}
a_{k}A_{k}\right)  \right)   &  =\Phi_{\lambda}\left(  \exp\left(  a_{1}%
A_{1}\right)  \cdots\exp\left(  a_{n_{2}}A_{n_{2}}\right)  \right) \\
&  =\mathbf{C}\left(  a_{1},\cdots,a_{n_{2}}\right)  ^{\ast}\lambda
=\theta_{\lambda}\left(
{\displaystyle\sum\limits_{k=1}^{n_{2}}}
a_{k}A_{k}\right)  .
\end{align*}

\end{proof}

\begin{lemma}
\label{immersion}$\theta_{\lambda}$ defines an immersion of $\mathfrak{m}$
into $\mathfrak{p}^{\ast}$ of constant rank $\dim\mathfrak{m}$.
\end{lemma}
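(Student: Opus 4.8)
The plan is to exploit the factorization $\theta_{\lambda}=\Phi_{\lambda}\circ\mathbf{e}$ established in the preceding lemma, combined with the properties of the two factors already proven. First I would recall that $\mathbf{e}\colon\mathfrak{m}\rightarrow M$ is a bijective bi-analytic map; consequently its differential $D\mathbf{e}_{A}$ is a linear isomorphism from $\mathfrak{m}$ onto the tangent space $T_{\mathbf{e}\left(  A\right)  }M$ at every point $A\in\mathfrak{m}$. In particular $D\mathbf{e}_{A}$ is injective and has rank $\dim\mathfrak{m}=n_{2}$ at each $A$.

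Next I would invoke the lemma asserting that $\Phi_{\lambda}$ is a constant-rank equivariant diffeomorphism of $M$ onto $Ad\left(  M\right)  ^{\ast}\lambda$, with rank equal to $\dim M$. Since a diffeomorphism onto its image is in particular an immersion, the differential $D\left(  \Phi_{\lambda}\right)  _{m}$ is injective at every $m\in M$, with rank $\dim M=n_{2}$. Thus both factors in the composition $\theta_{\lambda}=\Phi_{\lambda}\circ\mathbf{e}$ have differentials of rank exactly $n_{2}$, one being an isomorphism and the other being injective.

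Then I apply the chain rule. For every $A\in\mathfrak{m}$ we have
\[
D\left(  \theta_{\lambda}\right)  _{A}=D\left(  \Phi_{\lambda}\right)
_{\mathbf{e}\left(  A\right)  }\circ D\mathbf{e}_{A}.
\]
Because $D\mathbf{e}_{A}$ is an isomorphism and $D\left(  \Phi_{\lambda}\right)  _{\mathbf{e}\left(  A\right)  }$ is injective, their composition is injective; hence $D\left(  \theta_{\lambda}\right)  _{A}$ is injective for every $A$, which is precisely the assertion that $\theta_{\lambda}$ is an immersion. Moreover, precomposing a linear map with an isomorphism preserves its rank, so the rank of $D\left(  \theta_{\lambda}\right)  _{A}$ equals the rank of $D\left(  \Phi_{\lambda}\right)  _{\mathbf{e}\left(  A\right)  }$, namely $\dim M=\dim\mathfrak{m}$, and this value is independent of the base point $A$. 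This yields the constant rank $\dim\mathfrak{m}$ and completes the argument.

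There is no deep obstacle here: everything reduces to the chain rule once the two structural lemmas are in hand. The only points requiring a moment's care are the observations that $\mathbf{e}$ being bi-analytic forces $D\mathbf{e}_{A}$ to be a genuine linear isomorphism (not merely injective), and that $\Phi_{\lambda}$ being a diffeomorphism onto its image automatically furnishes an injective differential; both are standard consequences of the hypotheses recorded above, and rank invariance under composition with an isomorphism handles the constancy claim.
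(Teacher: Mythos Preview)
Your proof is correct and follows essentially the same approach as the paper: both exploit the factorization $\theta_{\lambda}=\Phi_{\lambda}\circ\mathbf{e}$ together with the previously established facts that $\mathbf{e}$ is a bi-analytic bijection and $\Phi_{\lambda}$ has constant rank $\dim M$. The paper's own proof is a terse one-liner that simply invokes this factorization, whereas you have spelled out the chain-rule reasoning explicitly, but the underlying argument is identical.
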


\begin{proof}
Since $\theta_{\lambda}=\Phi_{\lambda}\circ\mathbf{e}$, it follows that the
rank of the smooth map $\theta_{\lambda}$ is equal to $\dim\mathfrak{m}$ at
every $A\in\mathfrak{m}$.
\end{proof}

\vskip0.5cm \noindent Next, we recall that $D_{\theta_{\lambda}}%
=\mathrm{Jac}_{\theta_{\lambda}}\left(  0\right)  $ is the Jacobian of the map
$\theta_{\lambda}$ at the zero element in $\mathfrak{m}$. Let
\[
\mathfrak{B}_{\mathfrak{p}}=\left(  X_{1},\cdots,X_{n_{1}}\right)
\]
be a strong Malcev basis for $\mathfrak{p.}$ Fix
\begin{equation}
\boldsymbol{J}\left(  \lambda\right)  =\left(  j_{1},\cdots,j_{n_{2}}\right)
\in\mathcal{T} \label{J}%
\end{equation}
such that $D_{\theta_{\lambda}}\left(  j_{1},\cdots,j_{n_{2}}\right)
\in\mathcal{A}$. Define the map $\mathbf{P}_{\boldsymbol{J}\left(
\lambda\right)  }:\mathfrak{p}\rightarrow\mathfrak{p}$ such that
\[
\mathbf{P}_{\boldsymbol{J}\left(  \lambda\right)  }\left(  X_{k}\right)
=\left\{
\begin{array}
[c]{c}%
X_{k}\text{ if }k\in\boldsymbol{J}\left(  \lambda\right) \\
0\text{ if }k\notin\boldsymbol{J}\left(  \lambda\right)
\end{array}
\right.  .
\]
Put $a=\left(  a_{1},\cdots,a_{n_{2}}\right)  ,$ and
\[
A\left(  a\right)  =A=%
{\displaystyle\sum\limits_{k=1}^{n_{2}}}
a_{k}A_{k}.
\]
From the definition of $\mathbf{P}_{\boldsymbol{J}\left(  \lambda\right)  },$
it is not hard to verify that $\mathbf{P}_{\boldsymbol{J}\left(
\lambda\right)  }$ is a linear map. Moreover, $\mathbf{P}_{\boldsymbol{J}%
\left(  \lambda\right)  }^{2}=\mathbf{P}_{\boldsymbol{J}\left(  \lambda
\right)  }$ and the nullspace and range of $\mathbf{P}_{\boldsymbol{J}\left(
\lambda\right)  }$ are orthogonal to each other with respect to the natural
dot product on $\mathfrak{p.}$ Therefore, $\mathbf{P}_{\boldsymbol{J}\left(
\lambda\right)  }$ defines an orthogonal projection of rank $n_{2}$ on
$\mathfrak{p.}$ $\mathbf{P}_{\boldsymbol{J}\left(  \lambda\right)  }$ is an
orthogonal projection of rank $n_{2}$ on the Lie algebra $\mathfrak{p.}$

\begin{lemma}
Let $X$ be an element of the range of $\mathbf{P}_{\boldsymbol{J}\left(
\lambda\right)  }$. Then given $\left(  a_{1},\cdots,a_{n_{2}}\right)  \in%
\mathbb{R}
^{n_{2}},$
\[
\left\langle \mathbf{C}\left(  a_{1},\cdots,a_{n_{2}}\right)  ^{\ast}%
\lambda,X\right\rangle =\left\langle \mathbf{P}_{\boldsymbol{J}\left(
\lambda\right)  }^{\ast}\mathbf{C}\left(  a_{1},\cdots,a_{n_{2}}\right)
^{\ast}\lambda,X\right\rangle .
\]

\end{lemma}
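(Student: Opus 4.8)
The plan is to reduce the asserted identity to the defining property of an idempotent projection together with the elementary duality between a linear map and its adjoint. First, I would recall from the discussion immediately preceding the lemma that $\mathbf{P}_{\boldsymbol{J}\left(\lambda\right)}$ is idempotent, that is $\mathbf{P}_{\boldsymbol{J}\left(\lambda\right)}^{2}=\mathbf{P}_{\boldsymbol{J}\left(\lambda\right)}$. Consequently, any vector $X$ lying in the range of $\mathbf{P}_{\boldsymbol{J}\left(\lambda\right)}$ is a fixed point of the projection:
\[
\mathbf{P}_{\boldsymbol{J}\left(\lambda\right)}X=X.
\]

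Next, I would invoke the defining property of the adjoint (dual) map $\mathbf{P}_{\boldsymbol{J}\left(\lambda\right)}^{\ast}:\mathfrak{p}^{\ast}\rightarrow\mathfrak{p}^{\ast}$, namely that for every linear functional $\phi\in\mathfrak{p}^{\ast}$ and every $X\in\mathfrak{p}$ one has $\left\langle \mathbf{P}_{\boldsymbol{J}\left(\lambda\right)}^{\ast}\phi,X\right\rangle =\left\langle \phi,\mathbf{P}_{\boldsymbol{J}\left(\lambda\right)}X\right\rangle$. Taking $\phi=\mathbf{C}\left(a_{1},\cdots,a_{n_{2}}\right)^{\ast}\lambda$ and using the fixed-point property above, I would compute
\[
\left\langle \mathbf{P}_{\boldsymbol{J}\left(\lambda\right)}^{\ast}\mathbf{C}\left(a_{1},\cdots,a_{n_{2}}\right)^{\ast}\lambda,X\right\rangle =\left\langle \mathbf{C}\left(a_{1},\cdots,a_{n_{2}}\right)^{\ast}\lambda,\mathbf{P}_{\boldsymbol{J}\left(\lambda\right)}X\right\rangle =\left\langle \mathbf{C}\left(a_{1},\cdots,a_{n_{2}}\right)^{\ast}\lambda,X\right\rangle,
\]
which is exactly the claimed equality read from right to left.

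There is no genuine obstacle in this argument: the statement is a direct consequence of the fact that $X$ is fixed by the projection $\mathbf{P}_{\boldsymbol{J}\left(\lambda\right)}$. The only point demanding any attention is the bookkeeping of where each operator acts—$\mathbf{P}_{\boldsymbol{J}\left(\lambda\right)}$ operates on $\mathfrak{p}$ while its adjoint $\mathbf{P}_{\boldsymbol{J}\left(\lambda\right)}^{\ast}$ operates on the dual space $\mathfrak{p}^{\ast}$—but once the pairing is written out explicitly the identity follows in a single line. I would therefore present the computation exactly in this order, with the idempotency fact cited from the paragraph just above the lemma.
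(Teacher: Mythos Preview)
Your proposal is correct and follows essentially the same approach as the paper: both arguments reduce the identity to the idempotency of $\mathbf{P}_{\boldsymbol{J}(\lambda)}$ (so that $\mathbf{P}_{\boldsymbol{J}(\lambda)}X=X$ for $X$ in the range) together with the defining duality $\langle \mathbf{P}_{\boldsymbol{J}(\lambda)}^{\ast}\phi,X\rangle=\langle\phi,\mathbf{P}_{\boldsymbol{J}(\lambda)}X\rangle$. The paper writes $X=\mathbf{P}_{\boldsymbol{J}(\lambda)}X'$ and inserts an extra $\mathbf{P}_{\boldsymbol{J}(\lambda)}$ via $\mathbf{P}_{\boldsymbol{J}(\lambda)}^{2}=\mathbf{P}_{\boldsymbol{J}(\lambda)}$ before moving it across the pairing, whereas you use the fixed-point formulation directly; the two presentations are equivalent.
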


\begin{proof}
If $X$ belongs to the range of $\mathbf{P}_{\boldsymbol{J}\left(
\lambda\right)  }$ then $X=\mathbf{P}_{\boldsymbol{J}\left(  \lambda\right)
}X^{\prime}$ for some element $X^{\prime}$ in the Lie algebra $\mathfrak{p.}$
Consequently,
\begin{align*}
\left\langle \mathbf{C}\left(  a_{1},\cdots,a_{n_{2}}\right)  ^{\ast}%
\lambda,X\right\rangle  &  =\left\langle \mathbf{C}\left(  a_{1}%
,\cdots,a_{n_{2}}\right)  ^{\ast}\lambda,\mathbf{P}_{\boldsymbol{J}\left(
\lambda\right)  }X^{\prime}\right\rangle \\
&  =\left\langle \mathbf{C}\left(  a_{1},\cdots,a_{n_{2}}\right)  ^{\ast
}\lambda,\mathbf{P}_{\boldsymbol{J}\left(  \lambda\right)  }\mathbf{P}%
_{\boldsymbol{J}\left(  \lambda\right)  }X^{\prime}\right\rangle \\
&  =\left\langle \mathbf{P}_{\boldsymbol{J}\left(  \lambda\right)  }^{\ast
}\mathbf{C}\left(  a_{1},\cdots,a_{n_{2}}\right)  ^{\ast}\lambda
,\mathbf{P}_{\boldsymbol{J}\left(  \lambda\right)  }X^{\prime}\right\rangle \\
&  =\left\langle \mathbf{P}_{\boldsymbol{J}\left(  \lambda\right)  }^{\ast
}\mathbf{C}\left(  a_{1},\cdots,a_{n_{2}}\right)  ^{\ast}\lambda
,X\right\rangle .
\end{align*}

\end{proof}

Appealing to the preceding lemma, it is clear that if $X$ is an element of the
range of $\mathbf{P}_{\boldsymbol{J}\left(  \lambda\right)  }$ then
\[
\left[  \pi_{\lambda}\left(  \exp X\right)  \mathbf{f}\right]  \left(
\mathbf{e}\left(  A\right)  \right)  =e^{2\pi i\left\langle \mathbf{P}%
_{\boldsymbol{J}\left(  \lambda\right)  }^{\ast}\mathbf{C}\left(  a\right)
^{\ast}\lambda,X\right\rangle }\mathbf{f}\left(  \mathbf{e}\left(  A\right)
\right)  .
\]

Next, let
\begin{equation}
\beta_{\boldsymbol{J}\left(  \lambda\right)  }:\mathfrak{m}\rightarrow
\sum_{k\in\boldsymbol{J}\left(  \lambda\right)  }%
\mathbb{R}
X_{k}^{\ast}=\mathbf{P}_{\boldsymbol{J}\left(  \lambda\right)  }^{\ast}\left(
\mathfrak{p}^{\ast}\right)  \label{betalamda}%
\end{equation}
such that
\[
\beta_{\boldsymbol{J}\left(  \lambda\right)  }\left(  A\right)  =\mathbf{P}%
_{\boldsymbol{J}\left(  \lambda\right)  }^{\ast}\mathbf{C}\left(  a\right)
^{\ast}\lambda=\mathbf{P}_{\boldsymbol{J}\left(  \lambda\right)  }^{\ast
}\theta_{\lambda}\left(  A\right)  .
\]

\begin{lemma}
\label{U}There exists an open set $\mathcal{O}$ around the neutral element in
$\mathfrak{m}$ such that the restriction of $\beta_{\boldsymbol{J}\left(
\lambda\right)  }$ to $\mathcal{O}$ defines a diffeomorphism between
$\mathcal{O}$ and $\beta_{\boldsymbol{J}\left(  \lambda\right)  }\left(
\mathcal{O}\right)  .$ Moreover, there exists $\epsilon>0$ such that given
\begin{equation}
\mathcal{O}_{\epsilon}=\left(  -\frac{\epsilon}{2},\frac{\epsilon}{2}\right)
A_{1}+\cdots+\left(  -\frac{\epsilon}{2},\frac{\epsilon}{2}\right)  A_{n_{2}%
}\subset\mathfrak{m}%
\end{equation}
the restriction of $\beta_{\boldsymbol{J}\left(  \lambda\right)  }$ to
$\mathcal{O}_{\epsilon}$ determines a diffeomorphism between $\mathcal{O}%
_{\epsilon}$ and $\beta_{\boldsymbol{J}\left(  \lambda\right)  }\left(
\mathcal{O}_{\epsilon}\right)  .$
\end{lemma}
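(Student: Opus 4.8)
The plan is to reduce the statement to a direct application of the Inverse Function Theorem, once I have verified that the differential of $\beta_{\boldsymbol{J}\left(\lambda\right)}$ at the origin is invertible. The crucial structural observation is that $\beta_{\boldsymbol{J}\left(\lambda\right)}$ maps $\mathfrak{m}$, a space of dimension $n_{2}$, into $\mathbf{P}_{\boldsymbol{J}\left(\lambda\right)}^{\ast}\left(\mathfrak{p}^{\ast}\right)=\sum_{j\in\boldsymbol{J}\left(\lambda\right)}\mathbb{R}X_{j}^{\ast}$, which also has dimension $n_{2}$. Hence $\beta_{\boldsymbol{J}\left(\lambda\right)}$ is a smooth map between manifolds of equal dimension, and it suffices to show that its differential at $0$ is a linear isomorphism.

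First I would compute this differential. Since $\beta_{\boldsymbol{J}\left(\lambda\right)}=\mathbf{P}_{\boldsymbol{J}\left(\lambda\right)}^{\ast}\circ\theta_{\lambda}$ and $\mathbf{P}_{\boldsymbol{J}\left(\lambda\right)}^{\ast}$ is a linear map, the chain rule shows that the Jacobian of $\beta_{\boldsymbol{J}\left(\lambda\right)}$ at $0$ equals $\mathbf{P}_{\boldsymbol{J}\left(\lambda\right)}^{\ast}$ applied to $\mathrm{Jac}_{\theta_{\lambda}}\left(0\right)=D_{\theta_{\lambda}}$. Because $\mathbf{P}_{\boldsymbol{J}\left(\lambda\right)}^{\ast}$ simply retains the coordinates indexed by $\boldsymbol{J}\left(\lambda\right)$ and annihilates the others, this Jacobian is precisely the $n_{2}\times n_{2}$ submatrix $D_{\theta_{\lambda}}\left(\boldsymbol{J}\left(\lambda\right)\right)$ obtained from the $n_{1}\times n_{2}$ matrix $D_{\theta_{\lambda}}$ by keeping the rows $j_{1},\cdots,j_{n_{2}}$.

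Next I would invoke the defining property of $\boldsymbol{J}\left(\lambda\right)$, namely that it was fixed so that $D_{\theta_{\lambda}}\left(\boldsymbol{J}\left(\lambda\right)\right)\in\mathcal{A}$, which by definition means $\det\left(D_{\theta_{\lambda}}\left(\boldsymbol{J}\left(\lambda\right)\right)\right)\neq0$. Such a choice is available because, by Lemma \ref{immersion}, $\theta_{\lambda}$ is an immersion of rank $n_{2}$, so $D_{\theta_{\lambda}}$ has full column rank $n_{2}$, forcing some $n_{2}$ of its $n_{1}$ rows to be linearly independent and hence $\mathcal{A}$ to be non-empty. Consequently the differential of $\beta_{\boldsymbol{J}\left(\lambda\right)}$ at $0$ is invertible, and the Inverse Function Theorem (\cite{Lee}, Theorem $5.11$) produces an open neighborhood $\mathcal{O}$ of $0$ in $\mathfrak{m}$ on which $\beta_{\boldsymbol{J}\left(\lambda\right)}$ restricts to a diffeomorphism onto its image $\beta_{\boldsymbol{J}\left(\lambda\right)}\left(\mathcal{O}\right)$.

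Finally, to obtain the box $\mathcal{O}_{\epsilon}$, I would use that $\mathcal{O}$ is open and contains $0$: there exists $\epsilon>0$ with $\mathcal{O}_{\epsilon}=\sum_{k=1}^{n_{2}}\left(-\frac{\epsilon}{2},\frac{\epsilon}{2}\right)A_{k}\subseteq\mathcal{O}$, and the restriction of a diffeomorphism to an open subset remains a diffeomorphism onto its image. The only step requiring genuine care is the identification of the Jacobian of $\beta_{\boldsymbol{J}\left(\lambda\right)}$ with the selected submatrix $D_{\theta_{\lambda}}\left(\boldsymbol{J}\left(\lambda\right)\right)$; this is the conceptual heart of the argument, but it follows immediately from the linearity of $\mathbf{P}_{\boldsymbol{J}\left(\lambda\right)}^{\ast}$ together with the definition of the coordinate projection, so I do not anticipate a substantive obstacle. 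In effect the lemma is bookkeeping about the coadjoint Jacobian combined with the Inverse Function Theorem.
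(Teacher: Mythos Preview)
Your proposal is correct and follows essentially the same approach as the paper: verify that the Jacobian of $\beta_{\boldsymbol{J}(\lambda)}$ at $0$ is invertible (which the paper simply asserts, while you explain via the identification with the submatrix $D_{\theta_{\lambda}}(\boldsymbol{J}(\lambda))\in\mathcal{A}$), then apply the Inverse Function Theorem and shrink to a box. The only cosmetic difference is that the paper routes through constant rank and invokes Proposition~5.16 in \cite{Lee}, whereas you invoke the Inverse Function Theorem (Theorem~5.11) directly; your version is in fact slightly more explicit about why the Jacobian is invertible.
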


\begin{proof}
Since the Jacobian of the map $\beta_{\boldsymbol{J}\left(  \lambda\right)  }$
at zero is invertible, there exists a neighborhood of the zero element such
that the restriction of $\beta_{\boldsymbol{J}\left(  \lambda\right)  }$ to
such a neighborhood has constant rank. Thus, appealing to Proposition $5.16$,
\cite{Lee}, there exists an open set $\mathcal{O}$ around the neutral element
in $\mathfrak{m}$ such that the restriction of $\beta_{\boldsymbol{J}\left(
\lambda\right)  }$ to $\mathcal{O}$ defines a diffeomorphism between
$\mathcal{O}$ and its range $\beta_{\boldsymbol{J}\left(  \lambda\right)
}\left(  \mathcal{O}\right)  .$ The second part of the lemma is trivial and we
shall omit its proof.
\end{proof}

\begin{proposition}
\label{tilesM}Let $\epsilon>0$ and set
\[
\Omega_{\epsilon}=\exp\left(  \left[  -\frac{\epsilon}{2},\frac{\epsilon}%
{2}\right)  A_{1}\right)  \exp\left(  \left[  -\frac{\epsilon}{2}%
,\frac{\epsilon}{2}\right)  A_{2}\right)  \cdots\exp\left(  \left[
\frac{\epsilon}{2},\frac{\epsilon}{2}\right)  A_{n_{2}}\right)  .
\]
Then
\[
\left\{  \gamma^{-1}\Omega_{\epsilon}:\gamma\in\exp\left(  \epsilon%
\mathbb{Z}
A_{1}\right)  \exp\left(  \epsilon%
\mathbb{Z}
A_{2}\right)  \cdots\exp\left(  \epsilon%
\mathbb{Z}
A_{n_{2}}\right)  \right\}
\]
is a measurable partition of $M.$
\end{proposition}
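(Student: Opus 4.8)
The plan is to prove the statement by induction on $n_{2}=\dim M$, exploiting the diffeomorphism $\mathbf{e}:\mathfrak{m}\to M$ furnished by the canonical coordinates of the second kind together with the flag of ideals $\mathfrak{m}_{i}=\mathbb{R}\text{-span}\{A_{1},\dots,A_{i}\}$ attached to the strong Malcev basis. In these coordinates $\Omega_{\epsilon}=\mathbf{e}(F_{\epsilon})$, where $F_{\epsilon}=\prod_{k=1}^{n_{2}}[-\tfrac{\epsilon}{2},\tfrac{\epsilon}{2})$ is a half-open box, and $\Gamma_{M}^{\epsilon}=\{\exp(\epsilon k_{1}A_{1})\cdots\exp(\epsilon k_{n_{2}}A_{n_{2}}):k\in\mathbb{Z}^{n_{2}}\}$. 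Because $\mathbf{e}$ is a diffeomorphism, $\Omega_{\epsilon}$ and each translate $\gamma^{-1}\Omega_{\epsilon}$ are Borel sets, so measurability is automatic; what must be shown is that every $m\in M$ can be written as $m=\gamma^{-1}\omega$ with $\gamma\in\Gamma_{M}^{\epsilon}$ and $\omega\in\Omega_{\epsilon}$ in exactly one way. For the base case $n_{2}=1$ the group $M\cong\mathbb{R}$ is abelian, and the claim reduces to the fact that the half-open intervals $[-\tfrac{\epsilon}{2}-\epsilon k,\tfrac{\epsilon}{2}-\epsilon k)$, $k\in\mathbb{Z}$, tile $\mathbb{R}$.

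For the inductive step I would peel off the last coordinate. Since $\mathfrak{m}_{n_{2}-1}$ is an ideal, $N=\exp(\mathfrak{m}_{n_{2}-1})$ is a closed connected normal subgroup of $M$ with $M/N\cong\mathbb{R}$; let $q:M\to M/N\cong\mathbb{R}$ be the quotient homomorphism normalized so that $q(\exp(sA_{n_{2}}))=s$ and $q|_{N}\equiv 0$, whence $q(\mathbf{e}(a))=a_{n_{2}}$. Every $\gamma\in\Gamma_{M}^{\epsilon}$ factors uniquely as $\gamma=\gamma'\exp(\epsilon kA_{n_{2}})$ with $\gamma'\in\Gamma'=\exp(\epsilon\mathbb{Z}A_{1})\cdots\exp(\epsilon\mathbb{Z}A_{n_{2}-1})\subset N$ and $k\in\mathbb{Z}$, and every $\omega\in\Omega_{\epsilon}$ factors uniquely as $\omega=\omega'\exp(bA_{n_{2}})$ with $\omega'\in\Omega'=\exp([-\tfrac{\epsilon}{2},\tfrac{\epsilon}{2})A_{1})\cdots\exp([-\tfrac{\epsilon}{2},\tfrac{\epsilon}{2})A_{n_{2}-1})\subset N$ and $b\in[-\tfrac{\epsilon}{2},\tfrac{\epsilon}{2})$. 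Applying $q$ to $\gamma^{-1}\omega=\exp(-\epsilon kA_{n_{2}})(\gamma')^{-1}\omega'\exp(bA_{n_{2}})$ and using that $q$ is a homomorphism vanishing on $N$ yields $q(\gamma^{-1}\omega)=b-\epsilon k$. Hence $m=\gamma^{-1}\omega$ forces $b-\epsilon k=a_{n_{2}}$, and by the one-dimensional tiling of $\mathbb{R}$ this determines the pair $(k,b)\in\mathbb{Z}\times[-\tfrac{\epsilon}{2},\tfrac{\epsilon}{2})$ uniquely.

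With $k$ and $b$ fixed, the equation $m=\gamma^{-1}\omega$ rearranges to $\exp(\epsilon kA_{n_{2}})\,m\,\exp(-bA_{n_{2}})=(\gamma')^{-1}\omega'$. Setting $m':=\exp(\epsilon kA_{n_{2}})\,m\,\exp(-bA_{n_{2}})$, the choice of $(k,b)$ gives $q(m')=\epsilon k+a_{n_{2}}-b=0$, so $m'\in N$. The residual problem, to write $m'\in N$ as $(\gamma')^{-1}\omega'$ with $\gamma'\in\Gamma'$ and $\omega'\in\Omega'$, is precisely the present proposition for the simply connected completely solvable group $N$ equipped with the strong Malcev basis $(A_{1},\dots,A_{n_{2}-1})$ of $\mathfrak{m}_{n_{2}-1}$. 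By the induction hypothesis there is a unique such $\gamma'$, hence a unique $\omega'$, and unwinding the factorizations recovers $\gamma^{-1}\omega=m$. Thus existence and uniqueness hold for $M$, completing the induction and exhibiting $\{\gamma^{-1}\Omega_{\epsilon}:\gamma\in\Gamma_{M}^{\epsilon}\}$ as a measurable partition of $M$.

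The step I expect to be the main obstacle is the reduction in the third paragraph: verifying that $m'$ genuinely lands in $N$ and that the remaining equation is exactly the $(n_{2}-1)$-dimensional instance of the statement. This rests on $N$ being normal, an automatic consequence of $\mathfrak{m}_{n_{2}-1}$ being an ideal of the strong Malcev basis, and on the homomorphism property of $q$, which together decouple the $A_{n_{2}}$-direction; it also uses the compatibility of the half-open box with the factorization $F_{\epsilon}=F_{\epsilon}'\times[-\tfrac{\epsilon}{2},\tfrac{\epsilon}{2})$, ensuring that $\Omega'$ and $\Gamma'$ are precisely the lower-dimensional data to which the induction hypothesis applies.
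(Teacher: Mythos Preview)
Your proof is correct and follows essentially the same inductive strategy as the paper: induction on $n_{2}$, peeling off the last one-parameter factor, and using that $N=\exp\mathfrak{m}_{n_{2}-1}$ is normal in $M$. The only organizational difference is that you invoke the quotient homomorphism $q:M\to M/N\cong\mathbb{R}$ to pin down $(k,b)$ first and then apply the induction hypothesis once to $m'=\exp(\epsilon kA_{n_{2}})\,m\,\exp(-bA_{n_{2}})\in N$, whereas the paper first decomposes both the $M_{1}$- and $H_{1}$-parts of $m$ separately, then conjugates by $\gamma_{n_{2}}$ and re-applies the induction hypothesis to the conjugated element; your route makes the uniqueness of the decomposition slightly more transparent, while the paper's route avoids introducing the quotient map explicitly.
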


\begin{proof}
We shall prove this claim by induction on the dimension of $M.$ Clearly for
$n_{2}=1$ the base case holds. Next, let
\[
M_{1}=\exp\left(
\mathbb{R}
A_{1}\right)  \cdots\exp\left(
\mathbb{R}
A_{n_{2}-1}\right)  \text{ and }H_{1}=\exp\left(
\mathbb{R}
A_{n_{2}}\right)
\]
Put $M=M_{1}H_{1}$,
\[
\Omega_{1}=\exp\left[  -\frac{\epsilon}{2},\frac{\epsilon}{2}\right)
A_{1}\cdots\exp\left[  -\frac{\epsilon}{2},\frac{\epsilon}{2}\right)
A_{n_{2}-1}%
\]
and $\Gamma_{M\text{ }}^{1}=\exp\left(  \epsilon%
\mathbb{Z}
A_{1}\right)  \cdots\exp\left(  \epsilon%
\mathbb{Z}
A_{n_{2}-1}\right)  .$ Next define
\[
\Omega=\exp\left[  -\frac{\epsilon}{2},\frac{\epsilon}{2}\right)  A_{1}%
\cdots\exp\left[  -\frac{\epsilon}{2},\frac{\epsilon}{2}\right)  A_{n_{2}}%
\]
and $\Gamma_{M\text{ }}=\exp\left(  \epsilon\mathbb{Z}A_{1}\right)  \cdots
\exp\left(  \epsilon\mathbb{Z}A_{n_{2}}\right)  .$ Let $m\in M.$ Then, there
exists a unique pair $\left(  m_{n_{2}-1},m_{n_{2}}\right)  \in M_{1}\times
H_{1}$ such that $m=m_{n_{2}-1}m_{n_{2}}.$ Appealing to the inductive
hypothesis, $m=\left(  \gamma_{n_{2}-1}^{-1}\omega_{n_{2}-1}\right)  \left(
\gamma_{n_{2}}^{-1}\omega_{n_{2}}\right)  $ where
\[
\gamma_{n_{2}-1}\in\Gamma_{M\text{ }}^{1},\omega_{n_{2}-1}\in\Omega_{1}%
,\gamma_{n_{2}}\in\exp\left(  \epsilon%
\mathbb{Z}
A_{n_{2}}\right)  ,\omega_{n_{2}}\in H_{1}.
\]
Next, let $e$ be the neutral element in $\Gamma_{M\text{ }}.$ Then
\begin{align*}
m  &  =e\left(  \gamma_{n_{2}-1}^{-1}\omega_{n_{2}-1}\right)  \left(
\gamma_{n_{2}}^{-1}\omega_{n_{2}}\right) \\
&  =\left(  \gamma_{n_{2}}^{-1}\gamma_{n_{2}}\right)  \left(  \gamma_{n_{2}%
-1}^{-1}\omega_{n_{2}-1}\right)  \left(  \gamma_{n_{2}}^{-1}\omega_{n_{2}%
}\right) \\
&  =\gamma_{n_{2}}^{-1}\left(  \gamma_{n_{2}}\left(  \gamma_{n_{2}-1}%
^{-1}\omega_{n_{2}-1}\right)  \gamma_{n_{2}}^{-1}\right)  \omega_{n_{2}}.
\end{align*}
Now, since $M_{1}$ is a normal subgroup of $M,$ $\gamma_{n_{2}-1}^{-1}%
\omega_{n_{2}-1}\in M_{1}$ it follows that $\gamma_{n_{2}}\left(
\gamma_{n_{2}-1}^{-1}\omega_{n_{2}-1}\right)  \gamma_{n_{2}}^{-1}\in M_{1}.$
Thus, there exists a unique pair $\left(  \widetilde{\gamma}_{n_{2}%
-1},\widetilde{\omega}_{n_{2}-1}\right)  \in\Gamma_{M\text{ }}^{1}\times
\Omega_{1}$ such that
\[
\gamma_{n_{2}}\left(  \gamma_{n_{2}-1}^{-1}\omega_{n_{2}-1}\right)
\gamma_{n_{2}}^{-1}=\widetilde{\gamma}_{n_{2}-1}^{-1}\widetilde{\omega}%
_{n_{2}-1}.
\]
Consequently,
\begin{align*}
m  &  =\gamma_{n_{2}}^{-1}\underset{=\widetilde{\gamma}_{n_{2}-1}%
^{-1}\widetilde{\omega}_{n_{2}-1}}{\underbrace{\left(  \gamma_{n_{2}}\left(
\gamma_{n_{2}-1}^{-1}\omega_{n_{2}-1}\right)  \gamma_{n_{2}}^{-1}\right)  }%
}\omega_{n_{2}}\\
&  =\gamma_{n_{2}}^{-1}\left(  \widetilde{\gamma}_{n_{2}-1}^{-1}%
\widetilde{\omega}_{n_{2}-1}\right)  \omega_{n_{2}}\\
&  =\left(  \gamma_{n_{2}}^{-1}\widetilde{\gamma}_{n_{2}-1}^{-1}\right)
\left(  \widetilde{\omega}_{n_{2}-1}\omega_{n_{2}}\right) \\
&  =\left(  \widetilde{\gamma}_{n_{2}-1}\gamma_{n_{2}}\right)  ^{-1}\left(
\widetilde{\omega}_{n_{2}-1}\omega_{n_{2}}\right)  .
\end{align*}
Since the factorization above is unique,
\[
\left\{  \gamma^{-1}\Omega_{\epsilon}:\gamma\in\exp\left(  \epsilon%
\mathbb{Z}
A_{1}\right)  \exp\left(  \epsilon%
\mathbb{Z}
A_{2}\right)  \cdots\exp\left(  \epsilon%
\mathbb{Z}
A_{n_{2}}\right)  \right\}
\]
forms a measurable partition of $M.$
\end{proof}

\noindent Fix $\epsilon\in\mathbf{L,}$%
\begin{equation}
\mathbf{L}=\left\{  s>0:\sum_{k=1}^{n_{2}}\left[  -\frac{s}{2},\frac{s}%
{2}\right]  A_{k}\subset\mathcal{O}\right\}  \subset%
\mathbb{R}
.
\end{equation}
Next, we define
\begin{equation}
\Omega_{\epsilon}=\exp\left[  -\frac{\epsilon}{2},\frac{\epsilon}{2}\right)
A_{1}\exp\left[  -\frac{\epsilon}{2},\frac{\epsilon}{2}\right)  A_{2}%
\cdots\exp\left[  \frac{\epsilon}{2},\frac{\epsilon}{2}\right)  A_{n_{2}}%
\end{equation}
such that
\begin{equation}
\mathcal{O}_{\epsilon}=\left(  -\frac{\epsilon}{2},\frac{\epsilon}{2}\right)
A_{1}+\cdots+\left(  -\frac{\epsilon}{2},\frac{\epsilon}{2}\right)  A_{n_{2}%
}\subset\mathfrak{m}%
\end{equation}
is an open set around the neutral element in $\mathfrak{m}$ such that the
restriction of $\beta_{\boldsymbol{J}\left(  \lambda\right)  }$ to
$\mathcal{O}_{\epsilon}$ defines a diffeomorphism between $\mathcal{O}%
_{\epsilon}$ and its image set $\beta_{\boldsymbol{J}\left(  \lambda\right)
}\left(  \mathcal{O}_{\epsilon}\right)  .$ Fix a discrete set $\Gamma_{M\text{
}}\subset M,$
\[
\Gamma_{M\text{ }}=\Gamma_{M\text{ }}^{\epsilon}=\exp\left(  \epsilon%
\mathbb{Z}
A_{1}\right)  \exp\left(  \epsilon%
\mathbb{Z}
A_{2}\right)  \cdots\exp\left(  \epsilon%
\mathbb{Z}
A_{n_{2}}\right)
\]
such that
\[
\left\{  \gamma^{-1}\Omega_{\epsilon}:\gamma\in\Gamma_{M\text{ }}\right\}
\]
is a $d\mu_{M}$-measurable partition of $M.$ Next, put
\[
\digamma_{\epsilon}=%
{\displaystyle\sum\limits_{k=1}^{n_{2}}}
\left[  -\frac{\epsilon}{2},\frac{\epsilon}{2}\right)  A_{k}\subset
\mathfrak{m}%
\]
such that $\mathbf{e}\left(  \digamma_{\epsilon}\right)  =\Omega_{\epsilon}.$
Fix $\boldsymbol{J}\left(  \lambda\right)  =\left(  j_{1}<\cdots<j_{m}\right)
$ such that $D_{\theta_{\lambda}}\left(  \boldsymbol{J}\left(  \lambda\right)
\right)  \in\mathcal{A}$ where
\[
\mathcal{A}=\left\{  D_{\theta_{\lambda}}\left(  \boldsymbol{I}\right)
:\boldsymbol{I}\in\mathcal{T}\text{ and }\mathrm{\det}\left(  D_{\theta
_{\lambda}}\left(  \boldsymbol{I}\right)  \right)  \neq0\right\}  .
\]
Finally, define $V_{\boldsymbol{J}\left(  \lambda\right)  }=%
\mathbb{R}
$-span$\left\{  X_{j}:j\in\boldsymbol{J}\left(  \lambda\right)  \right\}  $
and
\[
\mathbf{T}^{\epsilon}=\left\{
\begin{array}
[c]{c}%
\mathcal{T}:\mathcal{T}\text{ is a full-rank lattice of }\mathbf{P}%
_{\boldsymbol{J}\left(  \lambda\right)  }^{\ast}\left(  \mathfrak{p}^{\ast
}\right)  \text{ and }\\
\sum_{\kappa\in\mathcal{T}}\mathbf{1}_{\beta_{\boldsymbol{J}\left(
\lambda\right)  }\left(  \digamma_{\epsilon}\right)  }\left(  \xi
+\kappa\right)  \leq1\text{ for every }\xi\in\mathbf{P}_{\boldsymbol{J}\left(
\lambda\right)  }^{\ast}\left(  \mathfrak{p}^{\ast}\right)
\end{array}
\right\}  .
\]
Moreover, for a linear operator $\mathcal{L}^{\epsilon}:V_{\boldsymbol{J}%
\left(  \lambda\right)  }\rightarrow V_{\boldsymbol{J}\left(  \lambda\right)
}$ we define $\Lambda_{\epsilon}\left(  \mathcal{L}^{\epsilon}\right)  =%
\mathbb{Z}
$-span$\left\{  \mathcal{L}^{\epsilon}X_{j}:j\in\boldsymbol{J}\left(
\lambda\right)  \right\}  $ and
\[
\Lambda_{\epsilon}^{\star}\left(  \mathcal{L}^{\epsilon}\right)  =%
\mathbb{Z}
\text{-span}\left\{  \left(  \mathcal{L}^{\epsilon}\right)  ^{\top}X_{j}%
^{\ast}:j\in\boldsymbol{J}\left(  \lambda\right)  \right\}  .
\]

\begin{lemma}
\label{GammaP}$\mathbf{T}^{\epsilon}$ is a non-empty set.\ In other words,
there exists an invertible linear operator $\mathcal{L}^{\epsilon
}:V_{\boldsymbol{J}\left(  \lambda\right)  }\rightarrow V_{\boldsymbol{J}%
\left(  \lambda\right)  }$ such that $\beta_{\boldsymbol{J}\left(
\lambda\right)  }\left(  \digamma_{\epsilon}\right)  $ is contained in a
measurable fundamental domain of a dual lattice $\Lambda_{\epsilon}^{\star
}\left(  \mathcal{L}^{\epsilon}\right)  .$
\end{lemma}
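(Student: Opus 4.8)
The plan is to reduce the statement to the elementary geometric fact that a bounded subset of a finite-dimensional vector space fits inside a fundamental domain of any sufficiently coarse lattice, and then to realize such a coarse lattice as a dual lattice $\Lambda_{\epsilon}^{\star}\left(  \mathcal{L}^{\epsilon}\right)  $ for an explicitly chosen contracting operator $\mathcal{L}^{\epsilon}$. Throughout, let $V^{\ast}=\mathbf{P}_{\boldsymbol{J}\left(  \lambda\right)  }^{\ast}\left(  \mathfrak{p}^{\ast}\right)  =\sum_{j\in\boldsymbol{J}\left(  \lambda\right)  }\mathbb{R}X_{j}^{\ast}$, an $n_{2}$-dimensional real vector space.

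First I would record that $\beta_{\boldsymbol{J}\left(  \lambda\right)  }\left(  \digamma_{\epsilon}\right)  $ is bounded. Since $\digamma_{\epsilon}\subset\overline{\mathcal{O}_{\epsilon}}$ and $\beta_{\boldsymbol{J}\left(  \lambda\right)  }$ is smooth, hence continuous on the compact set $\overline{\mathcal{O}_{\epsilon}}$, the image $\beta_{\boldsymbol{J}\left(  \lambda\right)  }\left(  \digamma_{\epsilon}\right)  $ is contained in the compact set $\beta_{\boldsymbol{J}\left(  \lambda\right)  }\left(  \overline{\mathcal{O}_{\epsilon}}\right)  \subset V^{\ast}$. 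Identifying $V^{\ast}$ with $\mathbb{R}^{n_{2}}$ through the ordered basis $\left\{  X_{j}^{\ast}:j\in\boldsymbol{J}\left(  \lambda\right)  \right\}  $, there is a real number $R>0$ with $\beta_{\boldsymbol{J}\left(  \lambda\right)  }\left(  \digamma_{\epsilon}\right)  \subset\left[  -R,R\right]  ^{n_{2}}$.

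Next I would define the operator explicitly. Set $\mathcal{L}^{\epsilon}X_{j}=\left(  2R+1\right)  ^{-1}X_{j}$ for every $j\in\boldsymbol{J}\left(  \lambda\right)  $; this is an invertible linear operator on $V_{\boldsymbol{J}\left(  \lambda\right)  }$. Its inverse transpose acts by $\left(  \mathcal{L}^{\epsilon}\right)  ^{\top}X_{j}^{\ast}=\left(  2R+1\right)  X_{j}^{\ast}$, so that the dual lattice is $\Lambda_{\epsilon}^{\star}\left(  \mathcal{L}^{\epsilon}\right)  =\mathbb{Z}\text{-span}\left\{  \left(  2R+1\right)  X_{j}^{\ast}:j\in\boldsymbol{J}\left(  \lambda\right)  \right\}  $, which corresponds to $\left(  2R+1\right)  \mathbb{Z}^{n_{2}}$ under the identification above. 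The half-open box $D=\prod_{j\in\boldsymbol{J}\left(  \lambda\right)  }\left[  -\left(  2R+1\right)  /2,\left(  2R+1\right)  /2\right)  $ is a measurable fundamental domain for $\Lambda_{\epsilon}^{\star}\left(  \mathcal{L}^{\epsilon}\right)  $, and since $\left(  2R+1\right)  /2=R+1/2>R$ we have $\left[  -R,R\right]  ^{n_{2}}\subset D$ and therefore $\beta_{\boldsymbol{J}\left(  \lambda\right)  }\left(  \digamma_{\epsilon}\right)  \subset D$. Because the translates $\left\{  D+\kappa:\kappa\in\Lambda_{\epsilon}^{\star}\left(  \mathcal{L}^{\epsilon}\right)  \right\}  $ partition $V^{\ast}$, the translates of the subset $\beta_{\boldsymbol{J}\left(  \lambda\right)  }\left(  \digamma_{\epsilon}\right)  $ are pairwise disjoint, that is, $\sum_{\kappa}\mathbf{1}_{\beta_{\boldsymbol{J}\left(  \lambda\right)  }\left(  \digamma_{\epsilon}\right)  }\left(  \xi+\kappa\right)  \leq1$ for every $\xi\in V^{\ast}$. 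Hence $\Lambda_{\epsilon}^{\star}\left(  \mathcal{L}^{\epsilon}\right)  \in\mathbf{T}^{\epsilon}$, so $\mathbf{T}^{\epsilon}\neq\emptyset$.

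I do not expect any serious obstacle: the whole content is the boundedness of $\beta_{\boldsymbol{J}\left(  \lambda\right)  }\left(  \digamma_{\epsilon}\right)  $, already available from Lemma \ref{U}, combined with the packing of a bounded set under a coarse lattice. The only point demanding care is the bookkeeping between $\mathcal{L}^{\epsilon}$ and the lattice generated by its inverse transpose: one must scale $\mathcal{L}^{\epsilon}$ so that the \emph{dual} lattice becomes coarse, which forces $\mathcal{L}^{\epsilon}$ itself to be contracting. Note that no integrability hypothesis on $\pi_{\lambda}$ and no finer information about the orbit map enter this argument.
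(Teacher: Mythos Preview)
Your proposal is correct and follows essentially the same route as the paper: both arguments observe that $\beta_{\boldsymbol{J}\left(\lambda\right)}\left(\digamma_{\epsilon}\right)$ is bounded (by continuity on the compact closure $\overline{\mathcal{O}_{\epsilon}}$) and then take $\mathcal{L}^{\epsilon}$ to be a scalar multiple of the identity so that the dual lattice is coarse enough to pack the image. The paper uses the scaling factor $\left(2\delta_{\epsilon}\right)^{-1}$ with $\delta_{\epsilon}=\sup\left\{\left\Vert\beta_{\boldsymbol{J}\left(\lambda\right)}\left(A\right)\right\Vert_{\max}:A\in\digamma_{\epsilon}\right\}$, while you use $\left(2R+1\right)^{-1}$; your extra ``$+1$'' is a harmless safety margin that cleanly guarantees the closed cube $\left[-R,R\right]^{n_{2}}$ sits inside the half-open fundamental box.
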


\begin{proof}
First, we claim that $\beta_{\boldsymbol{J}\left(  \lambda\right)  }\left(
\digamma_{\epsilon}\right)  $ is a bounded set. Indeed, since the compact set
\[
\sum_{k=1}^{n_{2}}\left[  -\frac{\epsilon}{2},\frac{\epsilon}{2}\right]  A_{k}%
\]
is properly contained in $\mathcal{O}$ and because the restriction of
$\beta_{\boldsymbol{J}\left(  \lambda\right)  }$ to $\mathcal{O}$ is a
diffeomorphism, it follows that the image of $\sum_{k=1}^{n_{2}}\left[
-\frac{\epsilon}{2},\frac{\epsilon}{2}\right]  A_{k}$ under $\beta
_{\boldsymbol{J}\left(  \lambda\right)  }$ is compact as well. Moreover, the
fact that $\beta_{\boldsymbol{J}\left(  \lambda\right)  }\left(
\digamma_{\epsilon}\right)  $ is a subset of
\[
\beta_{\boldsymbol{J}\left(  \lambda\right)  }\left(  \sum_{k=1}^{n_{2}%
}\left[  -\frac{\epsilon}{2},\frac{\epsilon}{2}\right]  A_{k}\right)
\]
implies that $\beta_{\boldsymbol{J}\left(  \lambda\right)  }\left(
\digamma_{\epsilon}\right)  $ is a bounded set. Next, let
\begin{equation}
\delta_{\epsilon}=\sup\left\{  \left\Vert \beta_{\boldsymbol{J}\left(
\lambda\right)  }\left(
{\displaystyle\sum\limits_{k=1}^{n_{2}}}
a_{k}A_{k}\right)  \right\Vert _{\max}:\left(  a_{1},\cdots,a_{n_{2}}\right)
\in\left[  -\frac{\epsilon}{2},\frac{\epsilon}{2}\right)  ^{n_{2}}\right\}
>0. \label{delta}%
\end{equation}
Secondly, put
\[
\mathfrak{F}=\sum_{j\in\boldsymbol{J}\left(  \lambda\right)  }\left[
-\delta_{\epsilon},\delta_{\epsilon}\right)  X_{j}^{\ast}.
\]
Note that $\mathfrak{F}$ is a fundamental domain for the lattice $\sum
_{j\in\boldsymbol{J}\left(  \lambda\right)  }2\delta_{\epsilon}%
\mathbb{Z}
X_{j}^{\ast}.$ Thirdly, from the definition of $\delta_{\epsilon},$ it is
clear that $\beta_{\boldsymbol{J}\left(  \lambda\right)  }\left(
\digamma_{\epsilon}\right)  \subseteq\mathfrak{F}$. Define the linear map
$\mathcal{L}^{\epsilon}:V_{\boldsymbol{J}\left(  \lambda\right)  }\rightarrow
V_{\boldsymbol{J}\left(  \lambda\right)  }$ such that
\[
\mathcal{L}^{\epsilon}\left(  \sum_{j\in\boldsymbol{J}\left(  \lambda\right)
}x_{j}X_{j}\right)  =\sum_{j\in\boldsymbol{J}\left(  \lambda\right)  }%
\frac{x_{j}}{2\delta_{\epsilon}}X_{j}.
\]
In other words, the matrix representation of $\mathcal{L}^{\epsilon}$ with
respect to the ordered basis $\left(  X_{j}\right)  _{j\in\boldsymbol{J}%
\left(  \lambda\right)  }$ is given by
\[
\left[  \mathcal{L}^{\epsilon}\right]  _{\left(  X_{j}\right)  _{j\in
\boldsymbol{J}\left(  \lambda\right)  }}=\left[
\begin{array}
[c]{ccc}%
\frac{1}{2\delta_{\epsilon}} &  & \\
& \ddots & \\
&  & \frac{1}{2\delta_{\epsilon}}%
\end{array}
\right]  ,\text{ and }\left[  \left(  \mathcal{L}^{\epsilon}\right)  ^{\top
}\right]  _{\left(  X_{j}^{\ast}\right)  _{j\in\boldsymbol{J}\left(
\lambda\right)  }}=\left[
\begin{array}
[c]{ccc}%
2\delta_{\epsilon} &  & \\
& \ddots & \\
&  & 2\delta_{\epsilon}%
\end{array}
\right]  .
\]
Thus, for every $\xi\in\mathbf{P}_{\boldsymbol{J}\left(  \lambda\right)
}^{\ast}\left(  \mathfrak{p}^{\ast}\right)  ,$
\[
\sum_{\kappa\in\Lambda_{\epsilon}^{\star}\left(  \mathcal{L}^{\epsilon
}\right)  }\mathbf{1}_{\beta_{\boldsymbol{J}\left(  \lambda\right)  }\left(
\digamma_{\epsilon}\right)  }\left(  \xi+\kappa\right)  \leq1.
\]

\end{proof}

\begin{example}
Let $G$ be a connected, simply connected solvable Lie group with Lie algebra
$\mathfrak{g}$ spanned by $\left\{  X_{1},X_{2},X_{3},A_{1},A_{2}\right\}  $
with non-trivial Lie brackets
\[
\left[  A_{1},X_{2}\right]  =X_{1},\left[  A_{2},X_{1}\right]  =X_{1},\left[
A_{2},X_{2}\right]  =X_{2},\left[  A_{2},X_{3}\right]  =-2X_{3}%
\]
and
\[
\left[  \mathrm{ad}X_{1}\right]  _{\left(  X_{1},X_{2},X_{3}\right)  }=\left[
\begin{array}
[c]{ccc}%
0 & 1 & 0\\
0 & 0 & 0\\
0 & 0 & 0
\end{array}
\right]  ,\left[  \mathrm{ad}X_{2}\right]  _{\left(  X_{1},X_{2},X_{3}\right)
}=\left[
\begin{array}
[c]{ccc}%
1 & 0 & 0\\
0 & 1 & 0\\
0 & 0 & -2
\end{array}
\right]  .
\]
Fix $\lambda=X_{1}^{\ast}+X_{2}^{\ast}+X_{3}^{\ast}.$ With straightforward
computations, we obtain
\[
\beta_{\boldsymbol{J}\left(  \lambda\right)  }\left(  t_{1},t_{2}\right)
=\left\{
\begin{array}
[c]{c}%
\left(  e^{-t_{2}},e^{-t_{2}}\left(  t_{1}-1\right)  \right)  \text{ if
}\boldsymbol{J}\left(  \lambda\right)  =\left(  1,2\right) \\
\left(  e^{-t_{2}}\left(  t_{1}-1\right)  ,e^{2t_{2}}\right)  \text{ if
}\boldsymbol{J}\left(  \lambda\right)  =\left(  2,3\right)
\end{array}
\right.  .
\]
Finally, the orbital data corresponding to the linear functional $\lambda$ is
given by $\left\{  \beta_{\left(  1,2\right)  },\beta_{\left(  2,3\right)
}\right\}  .$ Next, fix the map
\[
\beta_{\left(  2,3\right)  }\left(  t\right)  =\left(  e^{-t_{2}}\left(
t_{1}-1\right)  ,e^{2t_{2}}\right)
\]
in the orbital data corresponding to $\lambda.$ Clearly $\beta_{\left(
2,3\right)  }$ defines a diffeomorphism between $%
\mathbb{R}
^{2}$ and $\beta_{\left(  2,3\right)  }\left(
\mathbb{R}
^{2}\right)  .$ It follows that
\[
\left\Vert \beta_{\left(  2,3\right)  }\left(  t\right)  \right\Vert _{\max
}=\max\left\{  e^{2t_{2}},\frac{\left\vert t_{1}-1\right\vert }{e^{t_{2}}%
}\right\}
\]
and
\[
\delta_{\epsilon}=\sup\left\{  \max\left\{  e^{2t_{2}},\frac{\left\vert
t_{1}-1\right\vert }{e^{t_{2}}}\right\}  :t\in\left[  -\frac{\epsilon}%
{2},\frac{\epsilon}{2}\right)  ^{2}\right\}  .
\]
Thus, $\beta_{\left(  2,3\right)  }\left(  \left[  -\frac{\epsilon}{2}%
,\frac{\epsilon}{2}\right)  ^{2}\right)  $ is contained in a fundamental
domain of $2\delta_{\epsilon}%
\mathbb{Z}
X_{2}^{\ast}+2\delta_{\epsilon}%
\mathbb{Z}
X_{3}^{\ast}.$
\end{example}

\section{Proofs of main theorems}

\subsection{Proof of Theorem \ref{Main}}

Fix $\epsilon\in\mathbf{L}$ and%
\[
\Omega_{\epsilon}=\exp\left(  \left[  -\frac{\epsilon}{2},\frac{\epsilon}%
{2}\right)  A_{1}\right)  \exp\left(  \left[  -\frac{\epsilon}{2}%
,\frac{\epsilon}{2}\right)  A_{2}\right)  \cdots\exp\left(  \left[
\frac{\epsilon}{2},\frac{\epsilon}{2}\right)  A_{n_{2}}\right)  .
\]
Fix $\mathcal{L}^{\epsilon}\in\mathbf{T}^{\epsilon}$ and put $\Gamma
_{P}^{\epsilon}=\exp\left(  \Lambda_{^{\epsilon}}\right)  .$ For any given
positive function $r$ defined on $\Omega_{\epsilon},$ let
\[
\mathbf{f}_{r,\epsilon}=r\times\mathbf{1}_{\Omega_{\epsilon}}%
\]
and let
\[
b\left(  \xi\right)  =\left[  \beta_{\boldsymbol{J}\left(  \lambda\right)
}\right]  ^{-1}\left(  \xi\right)  .
\]
Recall that $\rho$ is the Radon-Nikodym derivative given by%
\begin{equation}
d\mu_{M}\left(  \mathbf{e}\left(
{\displaystyle\sum\limits_{k=1}^{n_{2}}}
a_{k}A_{k}\right)  \right)  =d\mu_{M}\left(  \exp\left(  a_{1}A_{1}\right)
\cdots\exp\left(  a_{n_{2}}A_{n_{2}}\right)  \right)  =\rho\left(  A\right)
dA
\end{equation}
where $d\mu_{M}$ is a left Haar measure on the solvable group $M$ and $dA$ is
the Lebesgue measure on $\mathfrak{m}=\mathbb{R}^{n_{2}}$.

\begin{lemma}
\label{sumconv} Let $\mathbf{h}\in L^{2}\left(  \Omega_{\epsilon},d\mu
_{M}\right)  \cap C\left(  \Omega_{\epsilon}\right)  $. If
\[
\xi\mapsto r\left(  \mathbf{e}\left(  b\left(  \xi\right)  \right)  \right)
\rho\left(  b\left(  \xi\right)  \right)  \Theta_{\lambda}\left(  \xi\right)
\]
is square-integrable over $\beta_{\boldsymbol{J}\left(  \lambda\right)
}\left(  \digamma_{\epsilon}\right)  $ with respect to the Lebesgue measure
then
\begin{align*}
&
{\displaystyle\sum\limits_{\alpha\in\Gamma_{P}^{\epsilon}}}
\left\vert \left\langle \mathbf{h},\pi_{\lambda}\left(  \alpha\right)
\mathbf{f}_{r,\epsilon}\right\rangle _{L^{2}\left(  \Omega_{\epsilon},d\mu
_{M}\right)  }\right\vert ^{2}\\
& =\left\vert \det\left(  \mathcal{L}^{\epsilon}\right)  ^{\top}\right\vert
\int_{\digamma_{\epsilon}}\left\vert \mathbf{h}\left(  \mathbf{e}\left(
A\right)  \right)  \right\vert ^{2}\left[  \left\vert r\left(  \mathbf{e}%
\left(  A\right)  \right)  \right\vert ^{2}\rho\left(  A\right)  ^{2}%
\Theta_{\lambda}\left(  \beta_{\boldsymbol{J}\left(  \lambda\right)  }\left(
A\right)  \right)  \right]  dA.
\end{align*}

\end{lemma}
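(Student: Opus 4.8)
The plan is to reduce the left-hand sum to a Parseval identity for a Fourier orthonormal basis attached to the lattice $\Lambda_\epsilon$. First I would record that every $\alpha\in\Gamma_P^\epsilon$ has the form $\alpha=\exp X$ with $X=\sum_{j\in\boldsymbol{J}(\lambda)}n_j\mathcal{L}^\epsilon X_j$ lying in the range of $\mathbf{P}_{\boldsymbol{J}(\lambda)}$; hence the character formula established for such $X$ just before \eqref{betalamda} gives $[\pi_\lambda(\alpha)\mathbf{f}_{r,\epsilon}](\mathbf{e}(A))=e^{2\pi i\langle\beta_{\boldsymbol{J}(\lambda)}(A),X\rangle}\,r(\mathbf{e}(A))\,\mathbf{1}_{\digamma_\epsilon}(A)$. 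Since $\pi_\lambda(\alpha)$ acts by a unimodular multiplier it preserves the support $\Omega_\epsilon=\mathbf{e}(\digamma_\epsilon)$, so writing the inner product in the coordinates $m=\mathbf{e}(A)$ and using $d\mu_M(\mathbf{e}(A))=\rho(A)\,dA$ yields
$$\langle\mathbf{h},\pi_\lambda(\alpha)\mathbf{f}_{r,\epsilon}\rangle=\int_{\digamma_\epsilon}\mathbf{h}(\mathbf{e}(A))\,\overline{r(\mathbf{e}(A))}\,\rho(A)\,e^{-2\pi i\langle\beta_{\boldsymbol{J}(\lambda)}(A),X\rangle}\,dA.$$
By Lemma \ref{U}, together with $\epsilon\in\mathbf{L}$, the map $\beta_{\boldsymbol{J}(\lambda)}$ restricts to a diffeomorphism on the open set $\mathcal{O}$, which contains $\overline{\mathcal{O}_\epsilon}$ and hence $\digamma_\epsilon$; so the substitution $\xi=\beta_{\boldsymbol{J}(\lambda)}(A)$, $A=b(\xi)$, $dA=\Theta_\lambda(\xi)\,d\xi$ rewrites the integral as $\int_{\beta_{\boldsymbol{J}(\lambda)}(\digamma_\epsilon)}G(\xi)\,e^{-2\pi i\langle\xi,X\rangle}\,d\xi$, where $G(\xi)=\mathbf{h}(\mathbf{e}(b(\xi)))\,\overline{r(\mathbf{e}(b(\xi)))}\,\rho(b(\xi))\,\Theta_\lambda(\xi)$.

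The heart of the argument is to recognise these integrals as Fourier coefficients against an orthonormal basis. I would check the duality $\langle\mathcal{L}^\epsilon X_j,(\mathcal{L}^\epsilon)^\top X_{j'}^\ast\rangle=\delta_{j,j'}$ (a one-line matrix computation using that $(\mathcal{L}^\epsilon)^\top$ is the inverse transpose), which exhibits $\Lambda_\epsilon$ as precisely the dual lattice of $\Lambda_\epsilon^\star$ and shows that $\Lambda_\epsilon^\star$ has covolume $|\det(\mathcal{L}^\epsilon)^\top|$. Consequently, for any measurable fundamental domain $\mathcal{F}$ of $\Lambda_\epsilon^\star$ the family $\{\,|\det(\mathcal{L}^\epsilon)^\top|^{-1/2}e^{2\pi i\langle\cdot,X\rangle}:X\in\Lambda_\epsilon\,\}$ is an orthonormal basis of $L^2(\mathcal{F})$. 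By Lemma \ref{GammaP} the set $\beta_{\boldsymbol{J}(\lambda)}(\digamma_\epsilon)$ is contained in such an $\mathcal{F}$, so extending $G$ by zero and invoking the stated hypothesis (together with the continuity of $\mathbf{h}$) to guarantee $G\in L^2(\mathcal{F})$, Parseval's identity gives
$$\sum_{\alpha\in\Gamma_P^\epsilon}\bigl|\langle\mathbf{h},\pi_\lambda(\alpha)\mathbf{f}_{r,\epsilon}\rangle\bigr|^2=\sum_{X\in\Lambda_\epsilon}\Bigl|\int_{\mathcal{F}}G(\xi)\,e^{-2\pi i\langle\xi,X\rangle}\,d\xi\Bigr|^2=|\det(\mathcal{L}^\epsilon)^\top|\int_{\beta_{\boldsymbol{J}(\lambda)}(\digamma_\epsilon)}|G(\xi)|^2\,d\xi.$$

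Finally I would undo the substitution. Changing variables back by $\xi=\beta_{\boldsymbol{J}(\lambda)}(A)$, so that $b(\xi)=A$ and $d\xi=\Theta_\lambda(\beta_{\boldsymbol{J}(\lambda)}(A))^{-1}\,dA$ (the Jacobian of $\beta_{\boldsymbol{J}(\lambda)}$ being the reciprocal of that of its inverse), the two powers $\Theta_\lambda(\xi)^2$ carried by $|G(\xi)|^2$ collapse to a single $\Theta_\lambda(\beta_{\boldsymbol{J}(\lambda)}(A))$, producing exactly
$$|\det(\mathcal{L}^\epsilon)^\top|\int_{\digamma_\epsilon}|\mathbf{h}(\mathbf{e}(A))|^2\Bigl[|r(\mathbf{e}(A))|^2\,\rho(A)^2\,\Theta_\lambda(\beta_{\boldsymbol{J}(\lambda)}(A))\Bigr]\,dA,$$
which is the asserted identity. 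The two genuinely delicate points are the bookkeeping of the Jacobian factor $\Theta_\lambda$ across the two changes of variables (one power is created in passing to $G$ and one is consumed on the way back), and the verification that the square-integrability hypothesis, combined with continuity of $\mathbf{h}$, places $G$ in $L^2(\mathcal{F})$ so that Parseval applies; the dual-lattice and fundamental-domain facts needed are already furnished by Lemmas \ref{U} and \ref{GammaP}.
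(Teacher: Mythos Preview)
Your proposal is correct and follows essentially the same route as the paper: write the inner product over $\digamma_\epsilon$ in the coordinates $m=\mathbf{e}(A)$ with $d\mu_M=\rho(A)\,dA$, substitute $\xi=\beta_{\boldsymbol{J}(\lambda)}(A)$ so that the inner products become Fourier coefficients against the exponentials indexed by $\Lambda_\epsilon$, apply Parseval using that $\beta_{\boldsymbol{J}(\lambda)}(\digamma_\epsilon)$ sits inside a fundamental domain of $\Lambda_\epsilon^\star$, and then undo the change of variables to recover the $\Theta_\lambda$ factor in the claimed form. The paper phrases the Parseval step as a Parseval frame statement for $L^2(\beta_{\boldsymbol{J}(\lambda)}(\digamma_\epsilon))$ rather than an orthonormal basis on a full fundamental domain, and it inserts a brief H\"older argument to justify integrability of the product before applying Parseval, but these are cosmetic differences; the substance is identical to what you wrote.
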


\begin{proof}
Let $\mathbf{f}_{r,\epsilon},\mathbf{h}\ $be as defined in the statement of
the lemma. Then
\begin{align*}
&
{\displaystyle\sum\limits_{\alpha\in\Gamma_{P}^{\epsilon}}}
\left\vert \left\langle \mathbf{h},\pi_{\lambda}\left(  \alpha\right)
\mathbf{f}_{r,\epsilon}\right\rangle _{L^{2}\left(  \Omega_{\epsilon},d\mu
_{M}\right)  }\right\vert ^{2}\\
&  =%
{\displaystyle\sum\limits_{\exp\left(  X\right)  \in\Gamma_{P}^{\epsilon}}}
\left\vert \left\langle \mathbf{h},\pi_{\lambda}\left(  \exp X\right)
\mathbf{f}_{r,\epsilon}\right\rangle _{L^{2}\left(  \Omega_{\epsilon},d\mu
_{M}\right)  }\right\vert ^{2}\\
&  =%
{\displaystyle\sum\limits_{\exp\left(  X\right)  \in\Gamma_{P}^{\epsilon}}}
\left\vert \int_{M}\mathbf{h}\left(  m\right)  e^{-2\pi i\left\langle
\lambda,\log\left(  m^{-1}\exp\left(  X\right)  m\right)  \right\rangle
}\overline{\mathbf{f}_{r,\epsilon}\left(  m\right)  }d\mu_{M}\left(  m\right)
\right\vert ^{2}.
\end{align*}
Since the support of $\mathbf{f}_{r,\epsilon}$ is equal to $\Omega_{\epsilon
},$ it follows that
\begin{align*}
&
{\displaystyle\sum\limits_{\alpha\in\Gamma_{P}^{\epsilon}}}
\left\vert \left\langle \mathbf{h},\pi_{\lambda}\left(  \alpha\right)
\mathbf{f}_{r,\epsilon}\right\rangle _{L^{2}\left(  \Omega_{\epsilon},d\mu
_{M}\right)  }\right\vert ^{2}\\
&  =%
{\displaystyle\sum\limits_{\exp\left(  X\right)  \in\Gamma_{P}^{\epsilon}}}
\left\vert \int_{\Omega_{\epsilon}}\mathbf{h}\left(  m\right)  e^{-2\pi
i\left\langle \lambda,\log\left(  m^{-1}\exp\left(  X\right)  m\right)
\right\rangle }\overline{\mathbf{f}_{r,\epsilon}\left(  m\right)  }d\mu
_{M}\left(  m\right)  \right\vert ^{2}.
\end{align*}
Next, since
\[
\mathbf{e}\left(  \digamma_{\epsilon}\right)  =\Omega_{\epsilon},
\]
for a given $m\in\Omega_{\epsilon},$ there exists a unique $A\in
\digamma_{\epsilon}$ such that
\[
m=\mathbf{e}\left(  A\right)
\]
and
\begin{align*}
&
{\displaystyle\sum\limits_{\alpha\in\Gamma_{P}^{\epsilon}}}
\left\vert \left\langle \mathbf{h},\pi_{\lambda}\left(  \alpha\right)
\mathbf{f}_{r,\epsilon}\right\rangle _{L^{2}\left(  \Omega_{\epsilon},d\mu
_{M}\right)  }\right\vert ^{2}\\
&  =%
{\displaystyle\sum\limits_{\exp\left(  X\right)  \in\Gamma_{P}^{\epsilon}}}
\left\vert \int_{\digamma_{\epsilon}}\mathbf{h}\left(  \mathbf{e}\left(
A\right)  \right)  e^{-2\pi i\left\langle D_{\boldsymbol{J}\left(
\lambda\right)  }\mathbf{C}\left(  a\right)  ^{\ast}\lambda,X\right\rangle
}\overline{\mathbf{f}_{r,\epsilon}\left(  \mathbf{e}\left(  A\right)  \right)
}\underset{=\rho\left(  A\right)  dA}{\underbrace{d\mu_{M}\left(
\mathbf{e}\left(  A\right)  \right)  }}\right\vert ^{2}\\
&  =%
{\displaystyle\sum\limits_{\exp\left(  X\right)  \in\Gamma_{P}^{\epsilon}}}
\left\vert \int_{\digamma_{\epsilon}}\mathbf{h}\left(  \mathbf{e}\left(
A\right)  \right)  e^{-2\pi i\left\langle D_{\boldsymbol{J}\left(
\lambda\right)  }\mathbf{C}\left(  a\right)  ^{\ast}\lambda,X\right\rangle
}\overline{\mathbf{f}_{r,\epsilon}\left(  \mathbf{e}\left(  A\right)  \right)
}\rho\left(  A\right)  dA\right\vert ^{2}\\
&  =%
{\displaystyle\sum\limits_{\exp\left(  X\right)  \in\Gamma_{P}^{\epsilon}}}
\left\vert \int_{\digamma_{\epsilon}}e^{-2\pi i\left\langle \beta
_{\boldsymbol{J}\left(  \lambda\right)  }\left(  A\right)  ,X\right\rangle
}\left(  r\left(  \mathbf{e}\left(  A\right)  \right)  \times\mathbf{h}\left(
\mathbf{e}\left(  A\right)  \right)  \rho\left(  A\right)  \right)
dA\right\vert ^{2}=\left(  \ast\right)
\end{align*}
Set $\xi=\beta_{\boldsymbol{J}\left(  \lambda\right)  }\left(  A\right)  $ and
let
\[
\Psi_{r,\boldsymbol{J}\left(  \lambda\right)  }\left(  \xi\right)  =r\left(
\mathbf{e}\left(  b\left(  \xi\right)  \right)  \right)  \mathbf{h}\left(
\mathbf{e}\left(  b\left(  \xi\right)  \right)  \right)  \rho\left(  b\left(
\xi\right)  \right)  .
\]
The change of variable $A=b\left(  \xi\right)  $ gives%
\[
\left(  \ast\right)  =%
{\displaystyle\sum\limits_{\exp\left(  X\right)  \in\Gamma_{P}^{\epsilon}}}
\left\vert \int_{\beta_{\boldsymbol{J}\left(  \lambda\right)  }\left(
\digamma_{\epsilon}\right)  }e^{-2\pi i\left\langle \xi,X\right\rangle }%
\Psi_{r,\boldsymbol{J}\left(  \lambda\right)  }\left(  \xi\right)  d\left(
b\left(  \xi\right)  \right)  \right\vert ^{2}.
\]
Recall that $\Theta_{\lambda}\left(  \xi\right)  $ is the Radon-Nikodym
derivative given by
\[
\frac{d\left(  b\left(  \xi\right)  \right)  }{d\xi}=\Theta_{\lambda}\left(
\xi\right)  .
\]
Since $\beta_{\boldsymbol{J}\left(  \lambda\right)  }\left(  \digamma
_{\epsilon}\right)  $ is contained in a fundamental domain of
\[
\Lambda_{\epsilon}^{\star}\left(  \mathcal{L}^{\epsilon}\right)  =%
\mathbb{Z}
\text{-span}\left\{  \left(  \mathcal{L}^{\epsilon}\right)  ^{\top}X_{j}%
^{\ast}:j\in\boldsymbol{J}\left(  \lambda\right)  \right\}  ,
\]
the system
\begin{equation}
\left\{  \frac{e^{-2\pi i\left\langle \xi,X\right\rangle }\times
\mathbf{1}_{\beta_{\boldsymbol{J}\left(  \lambda\right)  }\left(
\digamma_{\epsilon}\right)  }\left(  \xi\right)  }{\left\vert \det\left(
\mathcal{L}^{\epsilon}\right)  ^{\top}\right\vert ^{1/2}}:X\in\Lambda
_{^{\epsilon}}\right\}
\end{equation}
is a Parseval frame for $L^{2}\left(  \beta_{\boldsymbol{J}\left(
\lambda\right)  }\left(  \digamma_{\epsilon}\right)  \right)  .$ Appealing to
H\"{o}lder's inequality,
\[
\Psi_{r,\boldsymbol{J}\left(  \lambda\right)  }\left(  \xi\right)
\Theta_{\lambda}\left(  \xi\right)  =\mathbf{h}\left(  \mathbf{e}\left(
b\left(  \xi\right)  \right)  \right)  r\left(  \mathbf{e}\left(  b\left(
\xi\right)  \right)  \right)  \times\rho\left(  b\left(  \xi\right)  \right)
\Theta_{\lambda}\left(  \xi\right)
\]
is an element of $L^{1}\left(  \beta_{\boldsymbol{J}\left(  \lambda\right)
}\left(  \digamma_{\epsilon}\right)  \right)  .$ Consequently,
\begin{align*}
\left(  \ast\right)   &  =%
{\displaystyle\sum\limits_{\exp\left(  X\right)  \in\Gamma_{P}^{\epsilon}}}
\left\vert \int_{\beta_{\boldsymbol{J}\left(  \lambda\right)  }\left(
\digamma_{\epsilon}\right)  }e^{-2\pi i\left\langle \xi,X\right\rangle }%
\Psi_{r,\boldsymbol{J}\left(  \lambda\right)  }\left(  \xi\right)
\Theta_{\lambda}\left(  \xi\right)  d\xi\right\vert ^{2}\\
&  =%
{\displaystyle\sum\limits_{\exp\left(  X\right)  \in\Gamma_{P}^{\epsilon}}}
\left\vert \int_{\beta_{\boldsymbol{J}\left(  \lambda\right)  }\left(
\digamma_{\epsilon}\right)  }\left\vert \det\left(  \mathcal{L}^{\epsilon
}\right)  ^{\top}\right\vert ^{1/2}\left[  \frac{e^{-2\pi i\left\langle
\xi,X\right\rangle }}{\left\vert \det\left(  \mathcal{L}^{\epsilon}\right)
^{\top}\right\vert ^{1/2}}\right]  \Psi_{r,\boldsymbol{J}\left(
\lambda\right)  }\left(  \xi\right)  \Theta_{\lambda}\left(  \xi\right)
d\xi\right\vert ^{2}=\left(  \ast\ast\right)
\end{align*}
Next,
\begin{align*}
\left(  \ast\ast\right)   &  =%
{\displaystyle\sum\limits_{\exp\left(  X\right)  \in\Gamma_{P}^{\epsilon}}}
\left\vert \int_{\beta_{\boldsymbol{J}\left(  \lambda\right)  }\left(
\digamma_{\epsilon}\right)  }\left\vert \det\left(  \mathcal{L}^{\epsilon
}\right)  ^{\top}\right\vert ^{1/2}\left[  \frac{e^{-2\pi i\left\langle
\xi,X\right\rangle }}{\left\vert \det\left(  \mathcal{L}^{\epsilon}\right)
^{\top}\right\vert ^{1/2}}\right]  r\left(  \mathbf{e}\left(  b\left(
\xi\right)  \right)  \right)  \right. \\
&  \times\left.  \mathbf{h}\left(  \mathbf{e}\left(  b\left(  \xi\right)
\right)  \right)  \rho\left(  b\left(  \xi\right)  \right)  \Theta_{\lambda
}\left(  \xi\right)  d\xi\right\vert ^{2}\\
&  =\int_{\beta_{\boldsymbol{J}\left(  \lambda\right)  }\left(  \digamma
_{\epsilon}\right)  }\left\vert \left\vert \det\left(  \mathcal{L}^{\epsilon
}\right)  ^{\top}\right\vert ^{1/2}r\left(  \mathbf{e}\left(  b\left(
\xi\right)  \right)  \right)  \right\vert ^{2}\left\vert \mathbf{h}\left(
\mathbf{e}\left(  b\left(  \xi\right)  \right)  \right)  \rho\left(  b\left(
\xi\right)  \right)  \Theta_{\lambda}\left(  \xi\right)  \right\vert ^{2}%
d\xi\\
&  =\left(  \ast\ast\ast\right)
\end{align*}
and
\[
\left(  \ast\ast\ast\right)  =\int_{\beta_{\boldsymbol{J}\left(
\lambda\right)  }\left(  \digamma_{\epsilon}\right)  }\left\vert \left\vert
\det\left(  \mathcal{L}^{\epsilon}\right)  ^{\top}\right\vert ^{1/2}%
\mathbf{h}\left(  \mathbf{e}\left(  b\left(  \xi\right)  \right)  \right)
\right\vert ^{2}\left\vert r\left(  \mathbf{e}\left(  b\left(  \xi\right)
\right)  \right)  \rho\left(  b\left(  \xi\right)  \right)  \Theta_{\lambda
}\left(  \xi\right)  \right\vert ^{2}d\xi.
\]
Finally, the change of variable
\[
A=b\left(  \xi\right)  =\left[  \beta_{\boldsymbol{J}\left(  \lambda\right)
}\right]  ^{-1}\left(  \xi\right)
\]
yields
\begin{align*}
\left(  \ast\ast\ast\right)   &  =\left\vert \det\left(  \mathcal{L}%
^{\epsilon}\right)  ^{\top}\right\vert \int_{\digamma_{\epsilon}}\left\vert
\mathbf{h}\left(  \mathbf{e}\left(  A\right)  \right)  \right\vert ^{2}\left[
\left\vert r\left(  \mathbf{e}\left(  A\right)  \right)  \rho\left(  A\right)
\Theta_{\lambda}\left(  \beta_{\boldsymbol{J}\left(  \lambda\right)  }\left(
A\right)  \right)  \right\vert ^{2}\right]  d\left(  \beta_{\boldsymbol{J}%
\left(  \lambda\right)  }\left(  A\right)  \right) \\
&  =\left\vert \det\left(  \mathcal{L}^{\epsilon}\right)  ^{\top}\right\vert
\int_{\digamma_{\epsilon}}\left\vert \mathbf{h}\left(  \mathbf{e}\left(
A\right)  \right)  \right\vert ^{2}\left[  \left\vert r\left(  \mathbf{e}%
\left(  A\right)  \right)  \rho\left(  A\right)  \Theta_{\lambda}\left(
\beta_{\boldsymbol{J}\left(  \lambda\right)  }\left(  A\right)  \right)
\right\vert \right]  \Theta_{\lambda}\left(  \beta_{\boldsymbol{J}\left(
\lambda\right)  }\left(  A\right)  \right)  d\left(  \beta_{\boldsymbol{J}%
\left(  \lambda\right)  }\left(  A\right)  \right) \\
&  =\left(  \ast\ast\ast\ast\right)
\end{align*}
Since $\Theta_{\lambda}\left(  \xi\right)  $ is the absolute value of the
determinant of the Jacobian of $\left[  \beta_{\boldsymbol{J}\left(
\lambda\right)  }|_{\mathcal{O}}\right]  ^{-1},$
\[
d\left(  \left[  \beta_{\boldsymbol{J}\left(  \lambda\right)  }|_{\mathcal{O}%
}\right]  ^{-1}\left(  \xi\right)  \right)  =\Theta_{\lambda}\left(
\xi\right)  d\xi.
\]
As such,
\[
dA=d\left(  \left[  \beta_{\boldsymbol{J}\left(  \lambda\right)
}|_{\mathcal{O}}\right]  ^{-1}\left(  \beta_{\boldsymbol{J}\left(
\lambda\right)  }\left(  A\right)  \right)  \right)  =\Theta_{\lambda}\left(
\beta_{\boldsymbol{J}\left(  \lambda\right)  }\left(  A\right)  \right)
d\left(  \beta_{\boldsymbol{J}\left(  \lambda\right)  }\left(  A\right)
\right)  ,
\]
and%
\[
\left(  \ast\ast\ast\ast\right)  =\left\vert \det\left(  \mathcal{L}%
^{\epsilon}\right)  ^{\top}\right\vert \int_{\digamma_{\epsilon}}\left\vert
\mathbf{h}\left(  \mathbf{e}\left(  A\right)  \right)  \right\vert ^{2}\left[
\left\vert r\left(  \mathbf{e}\left(  A\right)  \right)  \right\vert ^{2}%
\rho\left(  A\right)  ^{2}\Theta_{\lambda}\left(  \beta_{\boldsymbol{J}\left(
\lambda\right)  }\left(  A\right)  \right)  \right]  dA.
\]

\end{proof}

\begin{lemma}
\label{compact} The function $A\mapsto\left(  \Theta_{\lambda}\left(
\beta_{\boldsymbol{J}\left(  \lambda\right)  }\left(  A\right)  \right)
\right)  ^{-1}$ is integrable on every compact measurable subset of
$\mathcal{O}.$
\end{lemma}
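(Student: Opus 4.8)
The plan is to reduce the claim to the boundedness of a single continuous function. First I would unwind the definition of $\Theta_{\lambda}$: by construction $\Theta_{\lambda}\left(\xi\right)$ is the absolute value of the determinant of the Jacobian of $\left[\beta_{\boldsymbol{J}\left(\lambda\right)}|_{\mathcal{O}}\right]^{-1}$. By Lemma \ref{U} the restriction $\beta_{\boldsymbol{J}\left(\lambda\right)}|_{\mathcal{O}}$ is a diffeomorphism onto its image, so the Inverse Function Theorem gives, for every $A\in\mathcal{O}$,
\[
D\left(\left[\beta_{\boldsymbol{J}\left(\lambda\right)}|_{\mathcal{O}}\right]^{-1}\right)\left(\beta_{\boldsymbol{J}\left(\lambda\right)}\left(A\right)\right)=\left(D\beta_{\boldsymbol{J}\left(\lambda\right)}\left(A\right)\right)^{-1}.
\]
Taking determinants and absolute values on both sides yields $\Theta_{\lambda}\left(\beta_{\boldsymbol{J}\left(\lambda\right)}\left(A\right)\right)=\left\vert\det\mathrm{Jac}_{\beta_{\boldsymbol{J}\left(\lambda\right)}}\left(A\right)\right\vert^{-1}$, and hence the function under scrutiny is simply
\[
\left(\Theta_{\lambda}\left(\beta_{\boldsymbol{J}\left(\lambda\right)}\left(A\right)\right)\right)^{-1}=\left\vert\det\mathrm{Jac}_{\beta_{\boldsymbol{J}\left(\lambda\right)}}\left(A\right)\right\vert.
\]

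Next I would observe that $\beta_{\boldsymbol{J}\left(\lambda\right)}=\mathbf{P}_{\boldsymbol{J}\left(\lambda\right)}^{\ast}\circ\theta_{\lambda}$ is real-analytic on all of $\mathfrak{m}$, since it is assembled from the matrix exponentials $e^{-\mathrm{ad}\left(a_{k}A_{k}\right)}$, the fixed linear functional $\lambda$, and the linear projection $\mathbf{P}_{\boldsymbol{J}\left(\lambda\right)}^{\ast}$ (this smoothness is exactly what underlies Lemma \ref{immersion}). Consequently $A\mapsto\det\mathrm{Jac}_{\beta_{\boldsymbol{J}\left(\lambda\right)}}\left(A\right)$ is continuous, and so is its absolute value $\left\vert\det\mathrm{Jac}_{\beta_{\boldsymbol{J}\left(\lambda\right)}}\right\vert$.

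Finally, let $K$ be any compact measurable subset of $\mathcal{O}$. A continuous function attains a finite maximum on the compact set $K$, so $\left\vert\det\mathrm{Jac}_{\beta_{\boldsymbol{J}\left(\lambda\right)}}\right\vert$ is bounded on $K$; since $K$ has finite Lebesgue measure, the bounded measurable function $\left(\Theta_{\lambda}\circ\beta_{\boldsymbol{J}\left(\lambda\right)}\right)^{-1}$ is integrable over $K$ with respect to $dA$, which proves the claim. The only point requiring any care is the bookkeeping in the first step: one must evaluate the Jacobian of the inverse map precisely at $\xi=\beta_{\boldsymbol{J}\left(\lambda\right)}\left(A\right)$ so that it is exactly the inverse of $\mathrm{Jac}_{\beta_{\boldsymbol{J}\left(\lambda\right)}}\left(A\right)$; once that identification is in place, the integrability follows immediately from continuity together with the finiteness of $\left\vert K\right\vert$. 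In particular, no integrability hypothesis on $\pi_{\lambda}$ enters here.
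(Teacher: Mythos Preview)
Your argument is correct. You use the inverse function theorem to identify $\left(\Theta_{\lambda}\circ\beta_{\boldsymbol{J}(\lambda)}\right)^{-1}$ with $\left\vert\det\mathrm{Jac}_{\beta_{\boldsymbol{J}(\lambda)}}\right\vert$ and then invoke continuity on the compact set $K$ to get boundedness and hence integrability. The paper instead performs the change of variable $\xi=\beta_{\boldsymbol{J}(\lambda)}(A)$ directly in the integral, so that the factor $\Theta_{\lambda}(\xi)$ cancels against the Jacobian of the substitution and one is left with $\int_{\beta_{\boldsymbol{J}(\lambda)}(K)}d\xi=\left\vert\beta_{\boldsymbol{J}(\lambda)}(K)\right\vert$, which is finite because the diffeomorphism sends the compact set $K$ to a compact (hence bounded) set. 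The two arguments rest on the same Jacobian identity; yours is a bit more direct, while the paper's route has the minor advantage of computing the integral exactly as the Lebesgue measure of the image, a formula that fits naturally with the later change-of-variable manipulations in the proof of Theorem~\ref{Main}.
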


\begin{proof}
We shall prove that if $\mathbf{K}$ is a compact Lebesgue measurable subset of
$\mathcal{O}$ then
\[
\int_{\mathbf{K}}\frac{1}{\Theta_{\lambda}\left(  \beta_{\boldsymbol{J}\left(
\lambda\right)  }\left(  A\right)  \right)  }dA<\infty.
\]
Letting $\xi=\beta_{\boldsymbol{J}\left(  \lambda\right)  }\left(  A\right)
,$%
\begin{align*}
\int_{\mathbf{K}}\frac{1}{\Theta_{\lambda}\left(  \beta_{\boldsymbol{J}\left(
\lambda\right)  }\left(  A\right)  \right)  }dA  &  =\int_{\beta
_{\boldsymbol{J}\left(  \lambda\right)  }\left(  \mathbf{K}\right)  }\frac
{1}{\Theta_{\lambda}\left(  \xi\right)  }d\left(  b\left(  \xi\right)  \right)
\\
&  =\int_{\beta_{\boldsymbol{J}\left(  \lambda\right)  }\left(  \mathbf{K}%
\right)  }\frac{\Theta_{\lambda}\left(  \xi\right)  }{\Theta_{\lambda}\left(
\xi\right)  }d\xi\\
&  =\int_{\beta_{\boldsymbol{J}\left(  \lambda\right)  }\left(  \mathbf{K}%
\right)  }d\xi\\
&  =\left\vert \beta_{\boldsymbol{J}\left(  \lambda\right)  }\left(
\mathbf{K}\right)  \right\vert .
\end{align*}
Using the fact that $\beta_{\boldsymbol{J}\left(  \lambda\right)  }$ is a
diffeomorphism between $\mathcal{O}$ and $\beta_{\boldsymbol{J}\left(
\lambda\right)  }\left(  \mathcal{O}\right)  $ together with the assumption
that $\mathbf{K}$ is a compact measurable subset of $\mathcal{O}\mathfrak{,}$
it follows that $\beta_{\boldsymbol{J}\left(  \lambda\right)  }\left(
\mathbf{K}\right)  $ is compact. Consequently,
\[
\int_{\mathbf{K}}\frac{dA}{\Theta_{\lambda}\left(  \beta_{\boldsymbol{J}%
\left(  \lambda\right)  }\left(  A\right)  \right)  }=\left\vert
\beta_{\boldsymbol{J}\left(  \lambda\right)  }\left(  \mathbf{K}\right)
\right\vert <\infty.
\]

\end{proof}

\begin{lemma}
If $r\left(  \mathbf{e}\left(  A\right)  \right)  =\left(  \rho\left(
A\right)  \Theta_{\lambda}\left(  \beta_{\boldsymbol{J}\left(  \lambda\right)
}\left(  A\right)  \right)  \right)  ^{-1/2}$ then $\mathbf{f}_{r,\epsilon
}=r\times\mathbf{1}_{\Omega_{\epsilon}}$ is square-integrable with respect to
the Haar measure on $M.$
\end{lemma}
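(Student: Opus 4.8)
The plan is to evaluate the squared norm $\left\Vert \mathbf{f}_{r,\epsilon}\right\Vert_{L^{2}\left(  M,d\mu_{M}\right)  }^{2}$ explicitly, transport the integral from $\Omega_{\epsilon}$ down to the Lie algebra $\mathfrak{m}$ via the diffeomorphism $\mathbf{e}$, substitute the prescribed formula for $r$, and recognize the resulting integral as precisely the one shown to be finite in Lemma \ref{compact}.

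First I would use that $\mathbf{f}_{r,\epsilon}$ is supported on $\Omega_{\epsilon}$ and that $\mathbf{e}\left(  \digamma_{\epsilon}\right)  =\Omega_{\epsilon}$, together with the Radon--Nikodym relation $d\mu_{M}\left(  \mathbf{e}\left(  A\right)  \right)  =\rho\left(  A\right)  \,dA$, to write
\[
\left\Vert \mathbf{f}_{r,\epsilon}\right\Vert_{L^{2}\left(  M,d\mu_{M}\right)  }^{2}=\int_{\Omega_{\epsilon}}\left\vert r\left(  m\right)  \right\vert^{2}\,d\mu_{M}\left(  m\right)  =\int_{\digamma_{\epsilon}}\left\vert r\left(  \mathbf{e}\left(  A\right)  \right)  \right\vert^{2}\rho\left(  A\right)  \,dA.
\]
Next I would substitute $\left\vert r\left(  \mathbf{e}\left(  A\right)  \right)  \right\vert^{2}=\left(  \rho\left(  A\right)  \,\Theta_{\lambda}\left(  \beta_{\boldsymbol{J}\left(  \lambda\right)  }\left(  A\right)  \right)  \right)^{-1}$, so that the factor $\rho\left(  A\right)  $ cancels and the norm collapses to
\[
\left\Vert \mathbf{f}_{r,\epsilon}\right\Vert_{L^{2}\left(  M,d\mu_{M}\right)  }^{2}=\int_{\digamma_{\epsilon}}\frac{1}{\Theta_{\lambda}\left(  \beta_{\boldsymbol{J}\left(  \lambda\right)  }\left(  A\right)  \right)  }\,dA.
\]

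The remaining task is to see that this integral is finite, which is exactly where Lemma \ref{compact} enters. The set $\digamma_{\epsilon}=\sum_{k=1}^{n_{2}}\left[  -\tfrac{\epsilon}{2},\tfrac{\epsilon}{2}\right)  A_{k}$ has compact closure $\overline{\digamma_{\epsilon}}=\sum_{k=1}^{n_{2}}\left[  -\tfrac{\epsilon}{2},\tfrac{\epsilon}{2}\right]  A_{k}$, and since $\epsilon\in\mathbf{L}$ the defining condition of $\mathbf{L}$ guarantees $\overline{\digamma_{\epsilon}}\subset\mathcal{O}$. Thus $\digamma_{\epsilon}$ is a measurable subset of a compact subset of $\mathcal{O}$, and Lemma \ref{compact} yields $\int_{\digamma_{\epsilon}}\left(  \Theta_{\lambda}\left(  \beta_{\boldsymbol{J}\left(  \lambda\right)  }\left(  A\right)  \right)  \right)^{-1}\,dA<\infty$. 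Hence $\mathbf{f}_{r,\epsilon}\in L^{2}\left(  M,d\mu_{M}\right)  $.

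There is essentially no analytic obstacle here; the whole content is bookkeeping and a single change of variables. The one point that must be checked with care is that the closure of $\digamma_{\epsilon}$ genuinely lies inside the diffeomorphism domain $\mathcal{O}$: this is precisely the reason $\mathbf{L}$ was defined using the \emph{closed} cubes $\left[  -\tfrac{s}{2},\tfrac{s}{2}\right]  A_{k}$ rather than half-open ones, so that compactness of $\overline{\digamma_{\epsilon}}$ together with its containment in $\mathcal{O}$ is available for invoking Lemma \ref{compact}.
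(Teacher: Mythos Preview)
Your proof is correct and follows essentially the same route as the paper: compute $\left\Vert \mathbf{f}_{r,\epsilon}\right\Vert_{L^{2}(M,d\mu_{M})}^{2}$ by passing to $\digamma_{\epsilon}$ via $\mathbf{e}$ and the density $\rho$, cancel $\rho(A)$ against the factor in $r$, and reduce to $\int_{\digamma_{\epsilon}}\Theta_{\lambda}(\beta_{\boldsymbol{J}(\lambda)}(A))^{-1}\,dA$, which is finite by Lemma~\ref{compact}. Your explicit remark that $\epsilon\in\mathbf{L}$ forces $\overline{\digamma_{\epsilon}}\subset\mathcal{O}$, so that Lemma~\ref{compact} genuinely applies, is a clarification the paper leaves implicit.
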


\begin{proof}
We note that
\begin{align*}
&  \left.  \left\vert r\left(  \mathbf{e}\left(  A\right)  \right)
\right\vert ^{2}\rho\left(  A\right)  ^{2}\Theta_{\lambda}\left(
\beta_{\boldsymbol{J}\left(  \lambda\right)  }\left(  A\right)  \right)
=\rho\left(  A\right)  \right. \\
&  \Leftrightarrow\left\vert r\left(  \mathbf{e}\left(  A\right)  \right)
\right\vert ^{2}\Theta_{\lambda}\left(  \beta_{\boldsymbol{J}\left(
\lambda\right)  }\left(  A\right)  \right)  =\frac{1}{\rho\left(  A\right)
}\\
&  \Leftrightarrow\left\vert r\left(  \mathbf{e}\left(  A\right)  \right)
\right\vert =\left(  \frac{1}{\rho\left(  A\right)  \Theta_{\lambda}\left(
\beta_{\boldsymbol{J}\left(  \lambda\right)  }\left(  A\right)  \right)
}\right)  ^{1/2}.
\end{align*}
Next, it is clear that\ $\mathbf{f}_{r,\epsilon}\in L^{2}\left(
\Omega_{\epsilon},d\mu_{M}\right)  $ if and only if $\frac{1}{\theta_{\lambda
}\left(  \beta_{\boldsymbol{J}\left(  \lambda\right)  }\left(  A\right)
\right)  }$ is integrable over $\digamma_{\epsilon}.$ Indeed,
\[
\int_{\digamma_{\epsilon}}\left\vert \mathbf{f}_{r,\epsilon}\left(
\mathbf{e}\left(  A\right)  \right)  \right\vert ^{2}\rho\left(  A\right)
dA=\int_{\digamma_{\epsilon}}\frac{dA}{\Theta_{\lambda}\left(  \beta
_{\boldsymbol{J}\left(  \lambda\right)  }\left(  A\right)  \right)  }.
\]
Appealing to Lemma \ref{compact},
\[
\int_{\digamma_{\epsilon}}\frac{1}{\Theta_{\lambda}\left(  \beta
_{\boldsymbol{J}\left(  \lambda\right)  }\left(  A\right)  \right)  }%
dA<\infty.
\]

\end{proof}

\begin{lemma}
Let $\mathbf{h}\in L^{2}\left(  \Omega_{\epsilon},d\mu_{M}\right)  \cap
C\left(  \Omega_{\epsilon}\right)  $. If
\[
r\left(  \mathbf{e}\left(  A\right)  \right)  =\frac{1}{\sqrt{\rho\left(
A\right)  \Theta_{\lambda}\left(  \beta_{\boldsymbol{J}\left(  \lambda\right)
}\left(  A\right)  \right)  \text{ }}}%
\]
then
\[%
{\displaystyle\sum\limits_{\alpha\in\Gamma_{P}^{\epsilon}}}
\left\vert \left\langle \mathbf{h},\pi_{\lambda}\left(  \alpha\right)
\mathbf{f}_{r,\epsilon}\right\rangle _{L^{2}\left(  \Omega_{\epsilon},d\mu
_{M}\right)  }\right\vert ^{2}=\left\vert \det\left(  \mathcal{L}^{\epsilon
}\right)  ^{\top}\right\vert \times\left\Vert \mathbf{h}\right\Vert
_{L^{2}\left(  \Omega_{\epsilon},d\mu_{M}\right)  }^{2}.
\]

\end{lemma}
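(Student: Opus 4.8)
The plan is to obtain this identity as an immediate specialization of Lemma \ref{sumconv}, once the integrability hypothesis of that lemma is verified for the prescribed choice of $r$. First I would check that hypothesis. Writing $b(\xi)=[\beta_{\boldsymbol{J}\left(\lambda\right)}]^{-1}(\xi)$ so that $\beta_{\boldsymbol{J}\left(\lambda\right)}(b(\xi))=\xi$, and substituting $r(\mathbf{e}(A))=(\rho(A)\Theta_{\lambda}(\beta_{\boldsymbol{J}\left(\lambda\right)}(A)))^{-1/2}$, the function appearing in the hypothesis simplifies to
\[
r(\mathbf{e}(b(\xi)))\,\rho(b(\xi))\,\Theta_{\lambda}(\xi)=\left(\rho(b(\xi))\,\Theta_{\lambda}(\xi)\right)^{1/2}.
\]
Its square is $\rho(b(\xi))\Theta_{\lambda}(\xi)$, and the change of variables $A=b(\xi)$, under which $\Theta_{\lambda}(\xi)\,d\xi=dA$ because $\Theta_{\lambda}$ is the Jacobian of $b$, turns the integral of this square over $\beta_{\boldsymbol{J}\left(\lambda\right)}(\digamma_{\epsilon})$ into $\int_{\digamma_{\epsilon}}\rho(A)\,dA=\mu_{M}(\Omega_{\epsilon})$, which is finite since $\Omega_{\epsilon}$ is relatively compact. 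Hence the function is square-integrable over $\beta_{\boldsymbol{J}\left(\lambda\right)}(\digamma_{\epsilon})$ and the hypothesis of Lemma \ref{sumconv} holds; this is in fact exactly the content of the preceding lemma, which establishes $\mathbf{f}_{r,\epsilon}\in L^{2}(\Omega_{\epsilon},d\mu_{M})$ by way of Lemma \ref{compact}.

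Next I would invoke Lemma \ref{sumconv} directly, yielding
\[
\sum_{\alpha\in\Gamma_{P}^{\epsilon}}\left\vert\left\langle\mathbf{h},\pi_{\lambda}(\alpha)\mathbf{f}_{r,\epsilon}\right\rangle_{L^{2}(\Omega_{\epsilon},d\mu_{M})}\right\vert^{2}=\left\vert\det(\mathcal{L}^{\epsilon})^{\top}\right\vert\int_{\digamma_{\epsilon}}\left\vert\mathbf{h}(\mathbf{e}(A))\right\vert^{2}\left[\left\vert r(\mathbf{e}(A))\right\vert^{2}\rho(A)^{2}\Theta_{\lambda}(\beta_{\boldsymbol{J}\left(\lambda\right)}(A))\right]dA,
\]
and then substitute $\left\vert r(\mathbf{e}(A))\right\vert^{2}=(\rho(A)\Theta_{\lambda}(\beta_{\boldsymbol{J}\left(\lambda\right)}(A)))^{-1}$. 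The bracketed factor collapses precisely to $\rho(A)$, so that the right-hand side becomes $\left\vert\det(\mathcal{L}^{\epsilon})^{\top}\right\vert\int_{\digamma_{\epsilon}}\left\vert\mathbf{h}(\mathbf{e}(A))\right\vert^{2}\rho(A)\,dA$.

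Finally I would recognize the surviving integral as the squared norm of $\mathbf{h}$: since $\mathbf{e}(\digamma_{\epsilon})=\Omega_{\epsilon}$ and $d\mu_{M}(\mathbf{e}(A))=\rho(A)\,dA$, the change of variables $m=\mathbf{e}(A)$ gives $\int_{\digamma_{\epsilon}}\left\vert\mathbf{h}(\mathbf{e}(A))\right\vert^{2}\rho(A)\,dA=\int_{\Omega_{\epsilon}}\left\vert\mathbf{h}(m)\right\vert^{2}d\mu_{M}(m)=\left\Vert\mathbf{h}\right\Vert_{L^{2}(\Omega_{\epsilon},d\mu_{M})}^{2}$, which delivers the claimed formula. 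There is no genuine obstacle in this argument; the only step calling for a moment's care is the verification of the integrability hypothesis in the first paragraph, and that reduces to the finiteness of $\mu_{M}(\Omega_{\epsilon})$, which is guaranteed by the relative compactness of $\Omega_{\epsilon}$.
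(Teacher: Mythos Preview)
Your proposal is correct and follows essentially the same route as the paper: verify the square-integrability hypothesis of Lemma~\ref{sumconv} by computing $r(\mathbf{e}(b(\xi)))\rho(b(\xi))\Theta_{\lambda}(\xi)=(\rho(b(\xi))\Theta_{\lambda}(\xi))^{1/2}$ and reducing its $L^{2}$-norm to $\mu_{M}(\Omega_{\epsilon})$, then apply Lemma~\ref{sumconv} and simplify the weight to $\rho(A)$. The only quibble is your aside that this verification ``is exactly the content of the preceding lemma''; that lemma establishes $\mathbf{f}_{r,\epsilon}\in L^{2}(\Omega_{\epsilon},d\mu_{M})$ via Lemma~\ref{compact}, which is a related but distinct integrability statement, so the paper (like your main argument) performs the hypothesis check directly rather than by citing the preceding lemma.
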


\begin{proof}
First, if $r\left(  \mathbf{e}\left(  A\right)  \right)  =\left(  \rho\left(
A\right)  \Theta_{\lambda}\left(  \beta_{\boldsymbol{J}\left(  \lambda\right)
}\left(  A\right)  \right)  \right)  ^{-1/2}$ then
\begin{align*}
r\left(  \mathbf{e}\left(  b\left(  \xi\right)  \right)  \right)  \rho\left(
b\left(  \xi\right)  \right)  \Theta_{\lambda}\left(  \xi\right)   &
=\frac{\rho\left(  b\left(  \xi\right)  \right)  \Theta_{\lambda}\left(
\xi\right)  }{\left(  \rho\left(  b\left(  \xi\right)  \right)  \Theta
_{\lambda}\left(  \beta_{\boldsymbol{J}\left(  \lambda\right)  }\left(
\left(  \beta_{\boldsymbol{J}\left(  \lambda\right)  }\right)  ^{-1}\left(
\xi\right)  \right)  \right)  \right)  ^{1/2}}\\
&  =\frac{\rho\left(  b\left(  \xi\right)  \right)  \Theta_{\lambda}\left(
\xi\right)  }{\rho\left(  b\left(  \xi\right)  \right)  ^{1/2}\Theta_{\lambda
}\left(  \xi\right)  ^{1/2}}\\
&  =\rho\left(  b\left(  \xi\right)  \right)  ^{1/2}\Theta_{\lambda}\left(
\xi\right)  ^{1/2}.
\end{align*}
Thus
\begin{align*}
\int_{\beta_{\boldsymbol{J}\left(  \lambda\right)  }\left(  \digamma
_{\epsilon}\right)  }\left\vert r\left(  \mathbf{e}\left(  b\left(
\xi\right)  \right)  \right)  \rho\left(  b\left(  \xi\right)  \right)
\Theta_{\lambda}\left(  \xi\right)  \right\vert ^{2}d\xi &  =\int
_{\beta_{\boldsymbol{J}\left(  \lambda\right)  }\left(  \digamma_{\epsilon
}\right)  }\rho\left(  b\left(  \xi\right)  \right)  \Theta_{\lambda}\left(
\xi\right)  d\xi\\
&  =\int_{\beta_{\boldsymbol{J}\left(  \lambda\right)  }\left(  \digamma
_{\epsilon}\right)  }\rho\left(  b\left(  \xi\right)  \right)  d\left(
b\left(  \xi\right)  \right) \\
&  =\int_{\digamma_{\epsilon}}\rho\left(  A\right)  d\left(  A\right)
=\left(  \ast\right)
\end{align*}
Next $d\mu_{M}\left(  \mathbf{e}\left(  A\right)  \right)  =\rho\left(
A\right)  dA$ implies that
\[
\left(  \ast\right)  =\int_{\digamma_{\epsilon}}d\mu_{M}\left(  \mathbf{e}%
\left(  A\right)  \right)  =\int_{\Omega_{\epsilon}}d\mu_{M}\left(  m\right)
.
\]
Consequently,
\[
\int_{\beta_{\boldsymbol{J}\left(  \lambda\right)  }\left(  \digamma
_{\epsilon}\right)  }\left\vert r\left(  \mathbf{e}\left(  b\left(
\xi\right)  \right)  \right)  \rho\left(  b\left(  \xi\right)  \right)
\Theta_{\lambda}\left(  \xi\right)  \right\vert ^{2}d\xi<\infty.
\]
Thus, the function
\[
\xi\mapsto r\left(  \mathbf{e}\left(  b\left(  \xi\right)  \right)  \right)
\rho\left(  b\left(  \xi\right)  \right)  \Theta_{\lambda}\left(  \xi\right)
\in L^{2}\left(  \beta_{\boldsymbol{J}\left(  \lambda\right)  }\left(
\digamma_{\epsilon}\right)  \right)  .
\]
Appealing to Lemma \ref{sumconv}, we obtain%
\[%
{\displaystyle\sum\limits_{\alpha\in\Gamma_{P}^{\epsilon}}}
\left\vert \left\langle \mathbf{h},\pi_{\lambda}\left(  \alpha\right)
\mathbf{f}_{r,\epsilon}\right\rangle \right\vert ^{2}=\left\vert \det\left(
\mathcal{L}^{\epsilon}\right)  ^{\top}\right\vert \int_{\digamma_{\epsilon}%
}\left\vert \mathbf{h}\left(  \mathbf{e}\left(  A\right)  \right)  \right\vert
^{2}\left[  \left\vert r\left(  \mathbf{e}\left(  A\right)  \right)
\right\vert ^{2}\rho\left(  A\right)  ^{2}\Theta_{\lambda}\left(
\beta_{\boldsymbol{J}\left(  \lambda\right)  }\left(  A\right)  \right)
\right]  dA.
\]
Next,
\[
\left\vert r\left(  \mathbf{e}\left(  A\right)  \right)  \right\vert ^{2}%
\rho\left(  A\right)  ^{2}\Theta_{\lambda}\left(  \beta_{\boldsymbol{J}\left(
\lambda\right)  }\left(  A\right)  \right)  =\frac{\rho\left(  A\right)
^{2}\Theta_{\lambda}\left(  \beta_{\boldsymbol{J}\left(  \lambda\right)
}\left(  A\right)  \right)  }{\rho\left(  A\right)  \Theta_{\lambda}\left(
\beta_{\boldsymbol{J}\left(  \lambda\right)  }\left(  A\right)  \right)
}=\rho\left(  A\right)  .
\]
Finally%
\begin{align*}%
{\displaystyle\sum\limits_{\alpha\in\Gamma_{P}^{\epsilon}}}
\left\vert \left\langle \mathbf{h},\pi_{\lambda}\left(  \alpha\right)
\mathbf{f}_{r,\epsilon}\right\rangle \right\vert ^{2}  &  =\left\vert
\det\left(  \mathcal{L}^{\epsilon}\right)  ^{\top}\right\vert \int
_{\digamma_{\epsilon}}\left\vert \mathbf{h}\left(  \mathbf{e}\left(  A\right)
\right)  \right\vert ^{2}\underset{=\rho\left(  A\right)  }{\underbrace
{\left\vert r\left(  \mathbf{e}\left(  A\right)  \right)  \right\vert ^{2}%
\rho\left(  A\right)  ^{2}\Theta_{\lambda}\left(  \beta_{\boldsymbol{J}\left(
\lambda\right)  }\left(  A\right)  \right)  }}dA\\
&  =\left\vert \det\left(  \mathcal{L}^{\epsilon}\right)  ^{\top}\right\vert
\int_{\digamma_{\epsilon}}\left\vert \mathbf{h}\left(  \mathbf{e}\left(
A\right)  \right)  \right\vert ^{2}\rho\left(  A\right)  dA\\
&  =\left\vert \det\left(  \mathcal{L}^{\epsilon}\right)  ^{\top}\right\vert
\times\left\Vert \mathbf{h}\right\Vert _{L^{2}\left(  \Omega_{\epsilon}%
,d\mu_{M}\right)  }^{2}.
\end{align*}

\end{proof}

Since $L^{2}\left(  \Omega_{\epsilon},d\mu_{M}\right)  \cap C\left(
\Omega_{\epsilon}\right)  $ is dense in $L^{2}\left(  \Omega_{\epsilon}%
,d\mu_{M}\right)  $ it follows that if $\mathbf{z}\in L^{2}\left(
\Omega_{\epsilon},d\mu_{M}\right)  $ and
\[
r\left(  \mathbf{e}\left(  A\right)  \right)  =\frac{1}{\sqrt{\rho\left(
A\right)  \Theta_{\lambda}\left(  \beta_{\boldsymbol{J}\left(  \lambda\right)
}\left(  A\right)  \right)  \text{ }}}%
\]
then
\[%
{\displaystyle\sum\limits_{\alpha\in\Gamma_{P}^{\epsilon}}}
\left\vert \left\langle \mathbf{z},\pi_{\lambda}\left(  \alpha\right)
\mathbf{f}_{r,\epsilon}\right\rangle _{L^{2}\left(  \Omega_{\epsilon},d\mu
_{M}\right)  }\right\vert ^{2}=\left\vert \det\left(  \mathcal{L}^{\epsilon
}\right)  ^{\top}\right\vert \times\left\Vert \mathbf{z}\right\Vert
_{L^{2}\left(  \Omega_{\epsilon},d\mu_{M}\right)  }^{2}.
\]
Next, we recall that
\[
\left\{  \gamma^{-1}\Omega_{\epsilon}:\gamma\in\Gamma_{M\text{ }}^{\epsilon
}=\exp\left(  \epsilon%
\mathbb{Z}
A_{1}\right)  \exp\left(  \epsilon%
\mathbb{Z}
A_{2}\right)  \cdots\exp\left(  \epsilon%
\mathbb{Z}
A_{n_{2}}\right)  \right\}
\]
is a partition of $M.$ Finally, put
\[
\Gamma^{\epsilon}=\left[  \Gamma_{M\text{ }}^{\epsilon}\right]  ^{-1}%
\Gamma_{P}^{\epsilon}.
\]
Given $r$ such that
\[
r\left(  \mathbf{e}\left(  A\right)  \right)  =\frac{1}{\sqrt{\rho\left(
A\right)  \Theta_{\lambda}\left(  \beta_{\boldsymbol{J}\left(  \lambda\right)
}\left(  A\right)  \right)  }}%
\]
it follows that
\[
\left\{  \pi_{\lambda}\left(  \kappa\right)  \left(  r\times\mathbf{1}%
_{\Omega_{\epsilon}}\right)  :\kappa\in\Gamma^{\epsilon}\right\}
\]
is a tight frame for $L^{2}\left(  M,\mu_{M}\right)  $ with frame bound
$\left\vert \det\left(  \mathcal{L}^{\epsilon}\right)  ^{\top}\right\vert .$

\subsection{Proof of Theorem \ref{smooth frames}}

Fix $\epsilon\in\mathbf{L}$ and define%
\[
\Omega_{\epsilon}^{\circ}=\exp\left(  \left(  -\frac{\epsilon}{2}%
,\frac{\epsilon}{2}\right)  A_{1}\right)  \cdots\exp\left(  \left(
-\frac{\epsilon}{2},\frac{\epsilon}{2}\right)  A_{n_{2}}\right)  \subset M
\]
such that
\[
\mathcal{O}_{\epsilon}=\left(  -\frac{\epsilon}{2},\frac{\epsilon}{2}\right)
A_{1}+\cdots+\left(  -\frac{\epsilon}{2},\frac{\epsilon}{2}\right)  A_{n_{2}%
}\subset\mathfrak{m}%
\]
is an open set around the neutral element in $\mathfrak{m}$. By assumption,
the restriction of $\beta_{\boldsymbol{J}\left(  \lambda\right)  }$ to
$\mathcal{O}_{\epsilon}$ defines a diffeomorphism between $\mathcal{O}%
_{\epsilon}$ and $\beta_{\boldsymbol{J}\left(  \lambda\right)  }\left(
\mathcal{O}_{\epsilon}\right)  .$ Next, we define the map
\[
\Phi_{\boldsymbol{J}\left(  \lambda\right)  }^{_{\epsilon}}:\Omega_{\epsilon
}^{\circ}\rightarrow\beta_{\boldsymbol{J}\left(  \lambda\right)  }\left(
\mathcal{O}_{\epsilon}\right)
\]
such that
\[
\Phi_{\boldsymbol{J}\left(  \lambda\right)  }^{_{\epsilon}}\left(
\mathbf{e}\left(
{\displaystyle\sum\limits_{k=1}^{n_{2}}}
a_{k}A_{k}\right)  \right)  =\beta_{\boldsymbol{J}\left(  \lambda\right)
}\left(
{\displaystyle\sum\limits_{k=1}^{n_{2}}}
a_{k}A_{k}\right)  .
\]
Thus,
\[
\Phi_{\boldsymbol{J}\left(  \lambda\right)  }^{_{\epsilon}}\circ
\mathbf{e}=\beta_{\boldsymbol{J}\left(  \lambda\right)  }%
\]
is a diffeomorphism (it is a composition of two diffeomorphisms:
$\beta_{\boldsymbol{J}\left(  \lambda\right)  }$ and $\mathbf{e}^{-1}.$) Let
$\mathbf{s}\in C_{c}^{\infty}\left(  M\right)  $ such that the support
$\mathrm{Supp}\left(  \mathbf{s}\right)  $ of $\mathbf{s}$ is a compact subset
of $\Omega_{\epsilon}^{\circ}.$ Put
\[
\Sigma_{\mathbf{s}}=\Phi_{\boldsymbol{J}\left(  \lambda\right)  }^{_{\epsilon
}}\left(  \mathrm{Supp}\left(  \mathbf{s}\right)  \right)  \subset
\beta_{\boldsymbol{J}\left(  \lambda\right)  }\left(  \mathcal{O}_{\epsilon
}\right)  .
\]
Since $\Phi_{\boldsymbol{J}\left(  \lambda\right)  }^{_{\epsilon}}$ is a
continuous function, it is clear that $\Sigma_{\mathbf{s}}$ is a compact
subset of $\beta_{\boldsymbol{J}\left(  \lambda\right)  }\left(
\mathcal{O}_{\epsilon}\right)  .$

\begin{lemma}
There exists an invertible linear operator $\mathcal{L}^{\epsilon}:\sum
_{j\in\boldsymbol{J}\left(  \lambda\right)  }%
\mathbb{R}
X_{j}\rightarrow\sum_{j\in\boldsymbol{J}\left(  \lambda\right)  }%
\mathbb{R}
X_{j}$ such that $\Sigma_{\mathbf{s}}$ is contained in a fundamental domain of
the lattice
\[
\Lambda_{\epsilon}^{\star}=\sum_{j\in\boldsymbol{J}\left(  \lambda\right)  }%
\mathbb{Z}
\left(  \mathcal{L}^{\epsilon}\right)  ^{\top}X_{j}^{\ast}\subset
\mathfrak{p}^{\ast}.
\]

\end{lemma}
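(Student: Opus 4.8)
The plan is to mimic the argument of Lemma \ref{GammaP} almost verbatim, with the compact set $\Sigma_{\mathbf{s}}$ playing the role that $\beta_{\boldsymbol{J}(\lambda)}(\digamma_{\epsilon})$ played there. The starting observation, already recorded in the paragraph preceding the statement, is that $\Sigma_{\mathbf{s}}=\Phi_{\boldsymbol{J}(\lambda)}^{\epsilon}(\mathrm{Supp}(\mathbf{s}))$ is a compact subset of $\beta_{\boldsymbol{J}(\lambda)}(\mathcal{O}_{\epsilon})\subset\mathbf{P}_{\boldsymbol{J}(\lambda)}^{\ast}(\mathfrak{p}^{\ast})$. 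In particular $\Sigma_{\mathbf{s}}$ is bounded in the max-norm, so the quantity $\sup\{\|\xi\|_{\max}:\xi\in\Sigma_{\mathbf{s}}\}$ is finite and, by compactness, is actually attained.

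First I would set $\delta_{\epsilon}$ to be a real number strictly larger than this supremum, for instance $\delta_{\epsilon}=1+\sup\{\|\xi\|_{\max}:\xi\in\Sigma_{\mathbf{s}}\}$. The reason for insisting on strict inequality (rather than taking the supremum itself, as in Lemma \ref{GammaP}) is that I want $\Sigma_{\mathbf{s}}$ to sit inside the half-open box $\mathfrak{F}=\sum_{j\in\boldsymbol{J}(\lambda)}[-\delta_{\epsilon},\delta_{\epsilon})X_{j}^{\ast}$; since the supremum is attained on the compact set $\Sigma_{\mathbf{s}}$, a strict bound guarantees $\|\xi\|_{\max}<\delta_{\epsilon}$ for every $\xi\in\Sigma_{\mathbf{s}}$, hence $\Sigma_{\mathbf{s}}\subseteq\mathfrak{F}$. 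The box $\mathfrak{F}$ is a measurable fundamental domain for the full-rank lattice $\sum_{j\in\boldsymbol{J}(\lambda)}2\delta_{\epsilon}\mathbb{Z}X_{j}^{\ast}$.

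Next I would produce the operator $\mathcal{L}^{\epsilon}$ exactly as in Lemma \ref{GammaP}, namely the invertible scalar map on $V_{\boldsymbol{J}(\lambda)}=\sum_{j\in\boldsymbol{J}(\lambda)}\mathbb{R}X_{j}$ defined by $\mathcal{L}^{\epsilon}(\sum_{j}x_{j}X_{j})=\sum_{j}(x_{j}/2\delta_{\epsilon})X_{j}$. Its transpose inverse $(\mathcal{L}^{\epsilon})^{\top}$ is then the scalar map with matrix $2\delta_{\epsilon}$ times the identity on the dual basis $(X_{j}^{\ast})_{j\in\boldsymbol{J}(\lambda)}$, so that the dual lattice is precisely $\Lambda_{\epsilon}^{\star}=\sum_{j\in\boldsymbol{J}(\lambda)}\mathbb{Z}(\mathcal{L}^{\epsilon})^{\top}X_{j}^{\ast}=\sum_{j\in\boldsymbol{J}(\lambda)}2\delta_{\epsilon}\mathbb{Z}X_{j}^{\ast}$, whose fundamental domain is the box $\mathfrak{F}$ from the previous step. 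Combining the two observations, $\Sigma_{\mathbf{s}}\subseteq\mathfrak{F}$ is contained in a fundamental domain of $\Lambda_{\epsilon}^{\star}$, which is exactly the assertion.

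I do not expect any genuine obstacle here, since the statement is a direct transcription of Lemma \ref{GammaP}. The only point requiring a little care — and the only place where compactness of $\Sigma_{\mathbf{s}}$ (rather than the merely bounded half-open image treated in Lemma \ref{GammaP}) is used — is the strict inequality $\|\xi\|_{\max}<\delta_{\epsilon}$, which is what places $\Sigma_{\mathbf{s}}$ inside the half-open fundamental domain rather than its closure; choosing $\delta_{\epsilon}$ strictly above the attained maximum disposes of this cleanly.
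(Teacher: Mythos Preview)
Your proposal is correct and follows essentially the same route as the paper: both argue that the compact set $\Sigma_{\mathbf{s}}$ is bounded, hence fits inside a half-open cube, and then take the lattice for which that cube is a fundamental domain. The paper's version is slightly terser (it writes the cube as $\sum_{j}[-\delta_{\epsilon}/2,\delta_{\epsilon}/2)X_{j}^{\ast}$ with lattice $\sum_{j}\delta_{\epsilon}\mathbb{Z}X_{j}^{\ast}$) and does not dwell on the strict-inequality issue you raise, but the substance is identical.
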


\begin{proof}
Since $\Sigma_{\mathbf{s}}$ is a compact subset of $\sum_{j\in\boldsymbol{J}%
\left(  \lambda\right)  }%
\mathbb{R}
X_{j}$ of positive Lebesgue measure, there exists a positive real number
$\delta_{\epsilon}$ such that
\[
\Sigma_{\mathbf{s}}\subset\sum_{j\in\boldsymbol{J}\left(  \lambda\right)
}\left[  -\frac{\delta_{\epsilon}}{2},\frac{\delta_{\epsilon}}{2}\right)
X_{j}^{\ast}.
\]
Next, the desired result holds by letting $\Lambda_{\epsilon}^{\star}$ be
equal to $\sum_{j\in\boldsymbol{J}\left(  \lambda\right)  }%
\mathbb{Z}
\delta_{\epsilon}X_{j}^{\ast}.$
\end{proof}

Fix $\mathcal{L}^{\epsilon}$ such that $\Sigma_{\mathbf{s}}$ is contained in a
fundamental domain of the lattice
\[
\Lambda_{\epsilon}^{\star}=\sum_{j\in\boldsymbol{J}\left(  \lambda\right)  }%
\mathbb{Z}
\left(  \mathcal{L}^{\epsilon}\right)  ^{\top}X_{j}^{\ast}\subset
\mathfrak{p}^{\ast}.
\]
Put $\Lambda_{\epsilon}=\sum_{j\in\boldsymbol{J}\left(  \lambda\right)  }%
\mathbb{Z}
\mathcal{L}^{\epsilon}X_{j}$ and let $\Gamma_{P}^{\epsilon}=\exp\left(
\Lambda_{\epsilon}\right)  \subset P.$ Recall that
\[
w\left(  A\right)  =\left\vert \det\left(  \frac{id-e^{-\mathrm{ad}\left(
A\right)  }}{\mathrm{ad}\left(  A\right)  }\right)  \right\vert .
\]
Put
\[
\mathbf{t}\left(  \xi\right)  =\nu\left(  \mathbf{e}^{-1}\left(  \left[
\Phi_{\boldsymbol{J}\left(  \lambda\right)  }^{_{\epsilon}}\right]
^{-1}\left(  \xi\right)  \right)  \right)  =\left(  \mathbf{t}_{1}\left(
\xi\right)  ,\cdots,\mathbf{t}_{n_{2}}\left(  \xi\right)  \right)
\]
and
\begin{equation}
\mathbf{d}\left(  \xi\right)  =\left\vert \det\left[
\begin{array}
[c]{ccc}%
\dfrac{\partial\left(  \mathbf{t}_{1}\left(  \xi\right)  \right)  }%
{\partial\xi_{j_{1}}} & \cdots & \dfrac{\partial\left(  \mathbf{t}_{1}\left(
\xi\right)  \right)  }{\partial\xi_{j_{n_{2}}}}\\
\vdots & \ddots & \vdots\\
\dfrac{\partial\left(  \mathbf{t}_{n_{2}}\left(  \xi\right)  \right)
}{\partial\xi_{j_{1}}} & \cdots & \dfrac{\partial\left(  \mathbf{t}_{n_{2}%
}\left(  \xi\right)  \right)  }{\partial\xi_{j_{n_{2}}}}%
\end{array}
\right]  \right\vert . \label{dfunc}%
\end{equation}

\begin{lemma}
If $\Psi_{\boldsymbol{J}\left(  \lambda\right)  }^{\epsilon}:\beta
_{\boldsymbol{J}\left(  \lambda\right)  }\left(  \mathcal{O}_{\epsilon
}\right)  \rightarrow\left(  0,\infty\right)  $ is a positive function given
by $\Psi_{\boldsymbol{J}\left(  \lambda\right)  }^{\epsilon}\left(
\xi\right)  d\xi=d\mu_{M}\left(  \left[  \Phi_{\boldsymbol{J}\left(
\lambda\right)  }^{_{\epsilon}}\right]  ^{-1}\left(  \xi\right)  \right)
\ $then%
\[
\Psi_{\boldsymbol{J}\left(  \lambda\right)  }^{\epsilon}\left(  \xi\right)
=w\left(  \nu\left(  \mathbf{e}^{-1}\left(  \left[  \Phi_{\boldsymbol{J}%
\left(  \lambda\right)  }^{_{\epsilon}}\right]  ^{-1}\left(  \xi\right)
\right)  \right)  \right)  \mathbf{d}\left(  \xi\right)  .
\]

\end{lemma}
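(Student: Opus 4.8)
The plan is to propagate the Haar measure through the explicit factorization of $\left[\Phi_{\boldsymbol{J}\left(\lambda\right)}^{_{\epsilon}}\right]^{-1}$ as a composition of maps whose individual measure-transformation laws are already recorded. First I would invert the identity $\Phi_{\boldsymbol{J}\left(\lambda\right)}^{_{\epsilon}}\circ\mathbf{e}=\beta_{\boldsymbol{J}\left(\lambda\right)}$ established just above, which yields
\[
\left[\Phi_{\boldsymbol{J}\left(\lambda\right)}^{_{\epsilon}}\right]^{-1}=\mathbf{e}\circ\left[\beta_{\boldsymbol{J}\left(\lambda\right)}\right]^{-1}
\]
on $\beta_{\boldsymbol{J}\left(\lambda\right)}\left(\mathcal{O}_{\epsilon}\right)$, both factors being diffeomorphisms. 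Writing $b\left(\xi\right)=\left[\beta_{\boldsymbol{J}\left(\lambda\right)}\right]^{-1}\left(\xi\right)\in\mathfrak{m}$ and invoking $\mathbf{e}\left(A\right)=\exp\left(\nu\left(A\right)\right)$, I would rewrite the point of $M$ as a single exponential,
\[
\left[\Phi_{\boldsymbol{J}\left(\lambda\right)}^{_{\epsilon}}\right]^{-1}\left(\xi\right)=\mathbf{e}\left(b\left(\xi\right)\right)=\exp\left(\nu\left(b\left(\xi\right)\right)\right)=\exp\left(\mathbf{t}\left(\xi\right)\right),
\]
the last equality being precisely the definition of $\mathbf{t}$, since $\mathbf{e}^{-1}\left(\left[\Phi_{\boldsymbol{J}\left(\lambda\right)}^{_{\epsilon}}\right]^{-1}\left(\xi\right)\right)=b\left(\xi\right)$ and hence $\mathbf{t}\left(\xi\right)=\nu\left(b\left(\xi\right)\right)$.

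With the argument of $\exp$ identified, the next step is to apply the explicit Haar-measure formula $d\mu_{M}\left(\exp\left(A\right)\right)=w\left(A\right)dA$ at the point $A=\mathbf{t}\left(\xi\right)$, which gives
\[
d\mu_{M}\left(\left[\Phi_{\boldsymbol{J}\left(\lambda\right)}^{_{\epsilon}}\right]^{-1}\left(\xi\right)\right)=w\left(\mathbf{t}\left(\xi\right)\right)d\left(\mathbf{t}\left(\xi\right)\right),
\]
where $d\left(\mathbf{t}\left(\xi\right)\right)$ denotes Lebesgue measure on $\mathfrak{m}$ pulled back along $\xi\mapsto\mathbf{t}\left(\xi\right)$. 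Since $\xi$ ranges over the $n_{2}$-dimensional space $\sum_{j\in\boldsymbol{J}\left(\lambda\right)}\mathbb{R}X_{j}^{\ast}$ with coordinates $\xi_{j_{1}},\cdots,\xi_{j_{n_{2}}}$, while $\mathbf{t}\left(\xi\right)=\left(\mathbf{t}_{1}\left(\xi\right),\cdots,\mathbf{t}_{n_{2}}\left(\xi\right)\right)$ has $n_{2}$ components, the change-of-variables formula gives $d\left(\mathbf{t}\left(\xi\right)\right)=\mathbf{d}\left(\xi\right)d\xi$, where $\mathbf{d}\left(\xi\right)$ is exactly the Jacobian modulus displayed in (\ref{dfunc}). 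Combining the two displays and comparing with the defining relation $\Psi_{\boldsymbol{J}\left(\lambda\right)}^{\epsilon}\left(\xi\right)d\xi=d\mu_{M}\left(\left[\Phi_{\boldsymbol{J}\left(\lambda\right)}^{_{\epsilon}}\right]^{-1}\left(\xi\right)\right)$ then produces
\[
\Psi_{\boldsymbol{J}\left(\lambda\right)}^{\epsilon}\left(\xi\right)=w\left(\mathbf{t}\left(\xi\right)\right)\mathbf{d}\left(\xi\right)=w\left(\nu\left(\mathbf{e}^{-1}\left(\left[\Phi_{\boldsymbol{J}\left(\lambda\right)}^{_{\epsilon}}\right]^{-1}\left(\xi\right)\right)\right)\right)\mathbf{d}\left(\xi\right),
\]
which is the asserted identity.

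This is essentially a careful bookkeeping of one change of variables, so there is no deep obstacle; the point demanding attention is the correct identification of which map carries the Lebesgue measure. The subtle step is recognizing that once $\left[\Phi_{\boldsymbol{J}\left(\lambda\right)}^{_{\epsilon}}\right]^{-1}\left(\xi\right)$ is rewritten as the single exponential $\exp\left(\mathbf{t}\left(\xi\right)\right)$, the relevant Jacobian is that of $\xi\mapsto\mathbf{t}\left(\xi\right)$ rather than of the coordinate-of-the-second-kind map $\mathbf{e}$; the weight $w$ already absorbs the discrepancy between the exponential coordinates and the canonical coordinates of the second kind, exactly as in the earlier derivation $\rho\left(A\right)dA=w\left(\nu\left(A\right)\right)d\left(\nu\left(A\right)\right)$. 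Positivity and smoothness of $\Psi_{\boldsymbol{J}\left(\lambda\right)}^{\epsilon}$ then follow at once, since $w$ is smooth and non-vanishing, $\mathbf{t}=\nu\circ\left[\beta_{\boldsymbol{J}\left(\lambda\right)}\right]^{-1}$ is a composition of diffeomorphisms, and $\mathbf{d}$ is the modulus of the Jacobian of the diffeomorphism $\xi\mapsto\mathbf{t}\left(\xi\right)$.
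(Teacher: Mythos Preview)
Your proposal is correct and follows essentially the same approach as the paper: both arguments track the Haar measure through the chain $\xi\mapsto b(\xi)\mapsto\nu(b(\xi))=\mathbf{t}(\xi)\mapsto\exp(\mathbf{t}(\xi))$, applying the formula $d\mu_{M}(\exp A)=w(A)\,dA$ and then the Jacobian $\mathbf{d}(\xi)$ of $\xi\mapsto\mathbf{t}(\xi)$. The only difference is presentational: the paper wraps the computation in a test-function integral $\int_{\Omega_{\epsilon}^{\circ}}\mathbf{f}(\Phi_{\boldsymbol{J}(\lambda)}^{\epsilon}(m))\,d\mu_{M}(m)$ and performs the changes of variables step by step, whereas you work directly at the level of densities.
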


\begin{proof}
For a suitable positive function $\mathbf{f}$ such that $\mathbf{f}\circ
\Phi_{\boldsymbol{J}\left(  \lambda\right)  }^{_{\epsilon}}\in L^{1}\left(
\Omega_{\epsilon}^{\circ},d\mu_{M}\left(  m\right)  \right)  ,$
\begin{align*}
&  \int_{\Omega_{\epsilon}^{\circ}}\mathbf{f}\left(  \Phi_{\boldsymbol{J}%
\left(  \lambda\right)  }^{_{\epsilon}}\left(  m\right)  \right)  \text{ }%
d\mu_{M}\left(  m\right)  \\
&  =\int_{\mathcal{O}_{\epsilon}}\mathbf{f}\left(  \Phi_{\boldsymbol{J}\left(
\lambda\right)  }^{_{\epsilon}}\left(  \mathbf{e}\left(  A\right)  \right)
\right)  \text{ }d\mu_{M}\left(  \mathbf{e}\left(  A\right)  \right)  \\
&  =\int_{\mathcal{O}_{\epsilon}}\mathbf{f}\left(  \Phi_{\boldsymbol{J}\left(
\lambda\right)  }^{_{\epsilon}}\left(  \exp\left(  \nu\left(  A\right)
\right)  \right)  \right)  \text{ }d\mu_{M}\left(  \exp\left(  \nu\left(
A\right)  \right)  \right)  \\
&  =\int_{\nu\left(  \mathcal{O}_{\epsilon}\right)  }\mathbf{f}\left(
\Phi_{\boldsymbol{J}\left(  \lambda\right)  }^{_{\epsilon}}\left(  \exp
A\right)  \right)  \text{ }d\mu_{M}\left(  \exp\left(  A\right)  \right)  \\
&  =\int_{\nu\left(  \mathcal{O}_{\epsilon}\right)  }\mathbf{f}\left(
\Phi_{\boldsymbol{J}\left(  \lambda\right)  }^{_{\epsilon}}\left(  \exp
A\right)  \right)  \text{ }\left\vert \det\left(  \frac{id-e^{-\mathrm{ad}%
\left(  A\right)  }}{\mathrm{ad}\left(  A\right)  }\right)  \right\vert dA.
\end{align*}
The first and second equality above follows from the change of variables
$m=\mathbf{e}\left(  A\right)  ,$ and $\mathbf{e}\left(  A\right)
=\exp\left(  \nu\left(  A\right)  \right)  $ respectively. Next,
\begin{align*}
&  \int_{\Omega_{\epsilon}^{\circ}}\mathbf{f}\left(  \Phi_{\boldsymbol{J}%
\left(  \lambda\right)  }^{_{\epsilon}}\left(  m\right)  \right)  \text{ }%
d\mu_{M}\left(  m\right)  \\
&  =\int_{\nu\left(  \mathcal{O}_{\epsilon}\right)  }\mathbf{f}\left(
\Phi_{\boldsymbol{J}\left(  \lambda\right)  }^{_{\epsilon}}\left(  \exp
A\right)  \right)  \text{ }w\left(  A\right)  dA\\
&  =\int_{\mathcal{O}_{\epsilon}}\mathbf{f}\left(  \Phi_{\boldsymbol{J}\left(
\lambda\right)  }^{_{\epsilon}}\left(  \exp\left(  \nu\left(  A\right)
\right)  \right)  \right)  \text{ }w\left(  \nu\left(  A\right)  \right)
d\left(  \nu\left(  A\right)  \right)  \\
&  =\int_{\mathcal{O}_{\epsilon}}\mathbf{f}\left(  \Phi_{\boldsymbol{J}\left(
\lambda\right)  }^{_{\epsilon}}\left(  \mathbf{e}\left(  A\right)  \right)
\right)  \text{ }w\left(  \nu\left(  A\right)  \right)  d\left(  \nu\left(
A\right)  \right)  \\
&  =\int_{\Omega_{\epsilon}^{\circ}}\mathbf{f}\left(  \Phi_{\boldsymbol{J}%
\left(  \lambda\right)  }^{_{\epsilon}}\left(  m\right)  \right)  \text{
}w\left(  \nu\left(  \mathbf{e}^{-1}\left(  m\right)  \right)  \right)
d\left(  \nu\left(  \mathbf{e}^{-1}\left(  m\right)  \right)  \right)  .
\end{align*}
Finally, the change of variable $\xi=\Phi_{\boldsymbol{J}\left(
\lambda\right)  }^{_{\epsilon}}\left(  m\right)  $ yields
\begin{align*}
&  \int_{\Omega_{\epsilon}^{\circ}}\mathbf{f}\left(  \Phi_{\boldsymbol{J}%
\left(  \lambda\right)  }^{_{\epsilon}}\left(  m\right)  \right)  \text{ }%
d\mu_{M}\left(  m\right)  \\
&  =\int_{\beta_{\boldsymbol{J}\left(  \lambda\right)  }\left(  \mathcal{O}%
_{\epsilon}\right)  }\mathbf{f}\left(  \xi\right)  \text{ }w\left(  \nu\left(
\mathbf{e}^{-1}\left(  \left[  \Phi_{\boldsymbol{J}\left(  \lambda\right)
}^{_{\epsilon}}\right]  ^{-1}\left(  \xi\right)  \right)  \right)  \right)
d\left(  \nu\left(  \mathbf{e}^{-1}\left(  \left[  \Phi_{\boldsymbol{J}\left(
\lambda\right)  }^{_{\epsilon}}\right]  ^{-1}\left(  \xi\right)  \right)
\right)  \right)  \\
&  =\int_{\beta_{\boldsymbol{J}\left(  \lambda\right)  }\left(  \mathcal{O}%
_{\epsilon}\right)  }\mathbf{f}\left(  \xi\right)  \text{ }\underset
{=\Psi_{\boldsymbol{J}\left(  \lambda\right)  }^{\epsilon}\left(  \xi\right)
d\xi}{\underbrace{w\left(  \nu\left(  \mathbf{e}^{-1}\left(  \left[
\Phi_{\boldsymbol{J}\left(  \lambda\right)  }^{_{\epsilon}}\right]
^{-1}\left(  \xi\right)  \right)  \right)  \right)  d\left(  \nu\left(
\mathbf{e}^{-1}\left(  \left[  \Phi_{\boldsymbol{J}\left(  \lambda\right)
}^{_{\epsilon}}\right]  ^{-1}\left(  \xi\right)  \right)  \right)  \right)  }}%
\end{align*}
From the definition of $\mathbf{d}$, it follows that
\[
\Psi_{\boldsymbol{J}\left(  \lambda\right)  }^{\epsilon}\left(  \xi\right)
=w\left(  \nu\left(  \mathbf{e}^{-1}\left(  \left[  \Phi_{\boldsymbol{J}%
\left(  \lambda\right)  }^{_{\epsilon}}\right]  ^{-1}\left(  \xi\right)
\right)  \right)  \right)  \mathbf{d}\left(  \xi\right)  .
\]

\end{proof}

\begin{proposition}
Let $\mathfrak{k}^{\ast}$ be a measurable subset of $\beta_{\boldsymbol{J}%
\left(  \lambda\right)  }\left(  \mathcal{O}_{\epsilon}\right)  .$ Then
\[
1_{\mathfrak{k}^{\ast}}\in L^{1}\left(  \mathcal{O}_{\epsilon},\Psi
_{\boldsymbol{J}\left(  \lambda\right)  }^{\epsilon}\left(  \xi\right)
d\xi\right)  \Leftrightarrow1_{\left(  \Phi_{\boldsymbol{J}\left(
\lambda\right)  }^{_{\epsilon}}\right)  ^{-1}\left(  \mathfrak{k}^{\ast
}\right)  }\in L^{1}\left(  \Omega_{\epsilon}^{\circ},d\mu_{M}\right)
\]

\end{proposition}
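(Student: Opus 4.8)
The plan is to read the assertion as nothing more than the transformation rule for a pushforward measure, applied to indicator functions. By its very definition the measure $\Psi_{\boldsymbol{J}\left(  \lambda\right)  }^{\epsilon}\left(  \xi\right)  d\xi$ on $\beta_{\boldsymbol{J}\left(  \lambda\right)  }\left(  \mathcal{O}_{\epsilon}\right)$ is the image of the Haar measure $d\mu_{M}$ under the diffeomorphism $\Phi_{\boldsymbol{J}\left(  \lambda\right)  }^{_{\epsilon}}:\Omega_{\epsilon}^{\circ}\rightarrow\beta_{\boldsymbol{J}\left(  \lambda\right)  }\left(  \mathcal{O}_{\epsilon}\right)$, since the defining relation $\Psi_{\boldsymbol{J}\left(  \lambda\right)  }^{\epsilon}\left(  \xi\right)  d\xi=d\mu_{M}\left(  \left[  \Phi_{\boldsymbol{J}\left(  \lambda\right)  }^{_{\epsilon}}\right]  ^{-1}\left(  \xi\right)  \right)$ is exactly the statement that $\Psi_{\boldsymbol{J}\left(  \lambda\right)  }^{\epsilon}\left(  \xi\right)  d\xi$ is the Radon--Nikodym realization of this pushforward.

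First I would record that, since $\Phi_{\boldsymbol{J}\left(  \lambda\right)  }^{_{\epsilon}}$ is a bijection, indicator functions pull back to indicator functions,
\[
\mathbf{1}_{\mathfrak{k}^{\ast}}\circ\Phi_{\boldsymbol{J}\left(  \lambda\right)  }^{_{\epsilon}}=\mathbf{1}_{\left(  \Phi_{\boldsymbol{J}\left(  \lambda\right)  }^{_{\epsilon}}\right)  ^{-1}\left(  \mathfrak{k}^{\ast}\right)  },
\]
and that, being a diffeomorphism, $\Phi_{\boldsymbol{J}\left(  \lambda\right)  }^{_{\epsilon}}$ is bimeasurable, so $\left(  \Phi_{\boldsymbol{J}\left(  \lambda\right)  }^{_{\epsilon}}\right)  ^{-1}\left(  \mathfrak{k}^{\ast}\right)$ is a measurable subset of $\Omega_{\epsilon}^{\circ}$ whenever $\mathfrak{k}^{\ast}$ is measurable.

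Next I would invoke the transformation formula for the pushforward measure in its unconditional, non-negative form: for every non-negative measurable $g$ on $\beta_{\boldsymbol{J}\left(  \lambda\right)  }\left(  \mathcal{O}_{\epsilon}\right)$ one has $\int g\,\Psi_{\boldsymbol{J}\left(  \lambda\right)  }^{\epsilon}\,d\xi=\int \left(  g\circ\Phi_{\boldsymbol{J}\left(  \lambda\right)  }^{_{\epsilon}}\right)  d\mu_{M}$ as an identity in $\left[  0,\infty\right]$. Taking $g=\mathbf{1}_{\mathfrak{k}^{\ast}}$ and using the previous display yields
\[
\int_{\beta_{\boldsymbol{J}\left(  \lambda\right)  }\left(  \mathcal{O}_{\epsilon}\right)  }\mathbf{1}_{\mathfrak{k}^{\ast}}\left(  \xi\right)  \Psi_{\boldsymbol{J}\left(  \lambda\right)  }^{\epsilon}\left(  \xi\right)  d\xi=\int_{\Omega_{\epsilon}^{\circ}}\mathbf{1}_{\left(  \Phi_{\boldsymbol{J}\left(  \lambda\right)  }^{_{\epsilon}}\right)  ^{-1}\left(  \mathfrak{k}^{\ast}\right)  }\left(  m\right)  d\mu_{M}\left(  m\right)  .
\]
Since the two sides are equal elements of $\left[  0,\infty\right]$, one is finite exactly when the other is, which is precisely the claimed equivalence.

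The only delicate point is to use the transformation law in the form valid for \emph{all} non-negative measurable integrands, rather than the $L^{1}$ version derived in the preceding lemma, because that version already presupposes integrability of $\mathbf{f}\circ\Phi_{\boldsymbol{J}\left(  \lambda\right)  }^{_{\epsilon}}$ --- exactly the condition we are trying to characterize. This causes no difficulty: the identity of the preceding lemma extends from bounded or integrable $\mathbf{f}$ to arbitrary non-negative measurable $\mathbf{f}$ by monotone convergence, and with this extension in hand the proposition is immediate.
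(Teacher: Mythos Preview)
Your proposal is correct and follows essentially the same route as the paper: both arguments establish the equality of $L^{1}$-norms
\[
\left\Vert 1_{\mathfrak{k}^{\ast}}\right\Vert_{L^{1}\left(\Psi_{\boldsymbol{J}\left(\lambda\right)}^{\epsilon}\left(\xi\right)d\xi\right)}=\left\Vert 1_{\left(\Phi_{\boldsymbol{J}\left(\lambda\right)}^{\epsilon}\right)^{-1}\left(\mathfrak{k}^{\ast}\right)}\right\Vert_{L^{1}\left(d\mu_{M}\right)}
\]
via the change of variable $m=\left(\Phi_{\boldsymbol{J}\left(\lambda\right)}^{\epsilon}\right)^{-1}\left(\xi\right)$, and then read off the equivalence. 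Your framing in terms of the pushforward measure and your remark that the transformation law must be invoked in its non-negative (rather than $L^{1}$) form are welcome clarifications, but do not change the substance of the argument.
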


\begin{proof}
First,
\[
\int_{\mathfrak{k}^{\ast}}\Psi_{\boldsymbol{J}\left(  \lambda\right)
}^{\epsilon}\left(  \xi\right)  d\xi=\int_{\mathfrak{k}^{\ast}}d\mu_{M}\left(
\left(  \Phi_{\boldsymbol{J}\left(  \lambda\right)  }^{_{\epsilon}}\right)
^{-1}\left(  \xi\right)  \right)  .
\]
Second, the change of variable $m=\left(  \Phi_{\boldsymbol{J}\left(
\lambda\right)  }^{_{\epsilon}}\right)  ^{-1}\left(  \xi\right)  $ yields%
\begin{align*}
\left\Vert 1_{\mathfrak{k}^{\ast}}\right\Vert _{L^{1}\left(  \mathcal{O}%
_{\epsilon},\Psi_{\boldsymbol{J}\left(  \lambda\right)  }^{\epsilon}\left(
\xi\right)  d\xi\right)  }  &  =\int_{\mathfrak{k}^{\ast}}\Psi_{\boldsymbol{J}%
\left(  \lambda\right)  }^{\epsilon}\left(  \xi\right)  d\xi\\
&  =\int_{\left(  \Phi_{\boldsymbol{J}\left(  \lambda\right)  }^{_{\epsilon}%
}\right)  ^{-1}\left(  \mathfrak{k}^{\ast}\right)  }d\mu_{M}\left(  m\right)
\\
&  =\left\Vert 1_{\left(  \Phi_{\boldsymbol{J}\left(  \lambda\right)
}^{_{\epsilon}}\right)  ^{-1}\left(  \mathfrak{k}^{\ast}\right)  }\right\Vert
_{L^{1}\left(  \Omega_{\epsilon}^{\circ},d\mu_{M}\right)  }.
\end{align*}

\end{proof}

\begin{example}
Let $G=\mathbb{R}\rtimes e^{\mathbb{R}}$ be a simply connected, connected
completely solvable Lie group with multiplication law
\[
\left(  x,e^{t}\right)  \left(  y,e^{s}\right)  =\left(  x+e^{t}%
y,e^{t+s}\right)  .
\]
Given a linear functional $\lambda=X_{1}^{\ast}$, we define the unitary
representation $\pi_{\lambda}$ as acting on $L^{2}\left(  \left(
0,\infty\right)  ,\frac{dh}{h}\right)  $ such that
\[
\left[  \pi_{\lambda}\left(  x,e^{t}\right)  \mathbf{f}\right]  \left(
h\right)  =e^{\frac{2\pi ix}{h}}\mathbf{f}\left(  \frac{h}{e^{t}}\right)  .
\]
Fix $\epsilon=2\ln2$ and define%
\[
\Omega_{2\ln2}^{\circ}=\left(  e^{-\ln2},e^{\ln2}\right)  =\left(  \frac{1}%
{2},2\right)
\]
and $\Phi_{\boldsymbol{J}\left(  \lambda\right)  }^{_{2\ln2}}\left(  h\right)
=h^{-1}.$ Next, let $\mathbf{s}\in C_{c}^{\infty}\left(  \left(
0,\infty\right)  \right)  $ such that the support of $\mathbf{s}$ is a compact
subset of $\left(  \frac{1}{2},2\right)  .$ Then
\[
\Sigma_{\mathbf{s}}=\Phi_{\boldsymbol{J}\left(  \lambda\right)  }^{_{\epsilon
}}\left(  \mathrm{Supp}\left(  \mathbf{s}\right)  \right)  \subset\left(
\frac{1}{2},2\right)
\]
and $\Sigma_{\mathbf{s}}$ is contained in a fundamental domain of
$\Lambda_{2\ln2}^{\star}=\frac{3}{2}%
\mathbb{Z}
$. Next, for a suitable function $\mathbf{f,}$
\[
\int_{\Omega_{\epsilon}^{\circ}}\mathbf{f}\left(  \Phi_{\boldsymbol{J}\left(
\lambda\right)  }^{_{\epsilon}}\left(  m\right)  \right)  \text{ }d\mu
_{M}\left(  m\right)  =\int_{\left(  \frac{1}{2},2\right)  }\mathbf{f}\left(
\frac{1}{h}\right)  \text{ }\frac{dh}{h}=\int_{\left(  \frac{1}{2},2\right)
}\mathbf{f}\left(  \xi\right)  \text{ }\frac{d\xi}{\xi}.
\]
Thus
\[
\Psi_{\boldsymbol{J}\left(  \lambda\right)  }^{2\ln2}\left(  \xi\right)
=\xi^{-1}%
\]
and
\[
\Psi_{\boldsymbol{J}\left(  \lambda\right)  }^{2\ln2}\left(  \Phi
_{\boldsymbol{J}\left(  \lambda\right)  }^{_{2\ln2}}\left(  h\right)  \right)
=\Psi_{\boldsymbol{J}\left(  \lambda\right)  }^{2\ln2}\left(  h^{-1}\right)
=h.
\]

\end{example}

\begin{proposition}
\label{integrable}If $\mathbf{K}$ is a compact subset of $\Omega_{\epsilon
}^{\circ}$ then
\[
m\mapsto\left[  \Psi_{\boldsymbol{J}\left(  \lambda\right)  }^{\epsilon
}\left(  \Phi_{\boldsymbol{J}\left(  \lambda\right)  }^{_{\epsilon}}\left(
m\right)  \right)  \right]  ^{-1}\in L^{1}\left(  \mathbf{K},d\mu_{M}\right)
.
\]

\end{proposition}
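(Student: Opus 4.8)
The plan is to exploit the defining change-of-variables property of $\Psi_{\boldsymbol{J}(\lambda)}^{\epsilon}$ and reduce the claimed integrability to the finiteness of the Lebesgue measure of a compact set, in exact parallel with the argument already used in Lemma \ref{compact}. First I would recall that, by construction, $\Phi_{\boldsymbol{J}(\lambda)}^{\epsilon}=\beta_{\boldsymbol{J}(\lambda)}\circ\mathbf{e}^{-1}$ is a diffeomorphism from $\Omega_{\epsilon}^{\circ}$ onto $\beta_{\boldsymbol{J}(\lambda)}(\mathcal{O}_{\epsilon})$, and that $\Psi_{\boldsymbol{J}(\lambda)}^{\epsilon}$ is a strictly positive smooth function on $\beta_{\boldsymbol{J}(\lambda)}(\mathcal{O}_{\epsilon})$ satisfying $\Psi_{\boldsymbol{J}(\lambda)}^{\epsilon}(\xi)\,d\xi=d\mu_{M}\left([\Phi_{\boldsymbol{J}(\lambda)}^{\epsilon}]^{-1}(\xi)\right)$. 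In particular the integrand $m\mapsto\left[\Psi_{\boldsymbol{J}(\lambda)}^{\epsilon}\left(\Phi_{\boldsymbol{J}(\lambda)}^{\epsilon}(m)\right)\right]^{-1}$ is a well-defined, positive, continuous function on $\Omega_{\epsilon}^{\circ}$, hence measurable on the compact set $\mathbf{K}$.

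The central step is the substitution $\xi=\Phi_{\boldsymbol{J}(\lambda)}^{\epsilon}(m)$, equivalently $m=[\Phi_{\boldsymbol{J}(\lambda)}^{\epsilon}]^{-1}(\xi)$. Writing $\Sigma=\Phi_{\boldsymbol{J}(\lambda)}^{\epsilon}(\mathbf{K})$, the defining relation for $\Psi_{\boldsymbol{J}(\lambda)}^{\epsilon}$ converts the Haar measure into Lebesgue measure, and the reciprocal of $\Psi_{\boldsymbol{J}(\lambda)}^{\epsilon}$ cancels exactly the resulting density factor:
\begin{align*}
\int_{\mathbf{K}}\left[\Psi_{\boldsymbol{J}(\lambda)}^{\epsilon}\left(\Phi_{\boldsymbol{J}(\lambda)}^{\epsilon}(m)\right)\right]^{-1}d\mu_{M}(m)
&=\int_{\Sigma}\left[\Psi_{\boldsymbol{J}(\lambda)}^{\epsilon}(\xi)\right]^{-1}d\mu_{M}\left([\Phi_{\boldsymbol{J}(\lambda)}^{\epsilon}]^{-1}(\xi)\right)\\
&=\int_{\Sigma}\left[\Psi_{\boldsymbol{J}(\lambda)}^{\epsilon}(\xi)\right]^{-1}\Psi_{\boldsymbol{J}(\lambda)}^{\epsilon}(\xi)\,d\xi=\int_{\Sigma}d\xi=|\Sigma|.
\end{align*}

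Finally I would observe that since $\mathbf{K}$ is a compact subset of $\Omega_{\epsilon}^{\circ}$ and $\Phi_{\boldsymbol{J}(\lambda)}^{\epsilon}$ is continuous, the image $\Sigma=\Phi_{\boldsymbol{J}(\lambda)}^{\epsilon}(\mathbf{K})$ is a compact, hence bounded, subset of $\beta_{\boldsymbol{J}(\lambda)}(\mathcal{O}_{\epsilon})$, so $|\Sigma|<\infty$. This gives $m\mapsto\left[\Psi_{\boldsymbol{J}(\lambda)}^{\epsilon}\left(\Phi_{\boldsymbol{J}(\lambda)}^{\epsilon}(m)\right)\right]^{-1}\in L^{1}(\mathbf{K},d\mu_{M})$, as required.

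As for difficulty, there is essentially no serious obstacle: the statement follows directly from the change-of-variables formula together with the compactness of $\mathbf{K}$, just as in Lemma \ref{compact}. The only points demanding care are to invoke the preceding lemma to guarantee that $\Phi_{\boldsymbol{J}(\lambda)}^{\epsilon}$ transports $d\mu_{M}$ to $\Psi_{\boldsymbol{J}(\lambda)}^{\epsilon}(\xi)\,d\xi$ on all of $\beta_{\boldsymbol{J}(\lambda)}(\mathcal{O}_{\epsilon})$, and to use the strict positivity of $\Psi_{\boldsymbol{J}(\lambda)}^{\epsilon}$ to ensure that the reciprocal stays finite throughout the compact image $\Sigma$; both are already in hand from the constructions leading up to this proposition.
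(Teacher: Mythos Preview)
Your proof is correct and follows essentially the same approach as the paper: perform the substitution $\xi=\Phi_{\boldsymbol{J}(\lambda)}^{\epsilon}(m)$, use the defining relation $d\mu_{M}\bigl([\Phi_{\boldsymbol{J}(\lambda)}^{\epsilon}]^{-1}(\xi)\bigr)=\Psi_{\boldsymbol{J}(\lambda)}^{\epsilon}(\xi)\,d\xi$ to cancel the integrand, and conclude by observing that the image of the compact set $\mathbf{K}$ under the continuous map $\Phi_{\boldsymbol{J}(\lambda)}^{\epsilon}$ has finite Lebesgue measure. The paper's argument is line-for-line the same.
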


\begin{proof}
Let $\mathbf{K}$ be a compact subset of $\Omega_{\epsilon}^{\circ}.$ In order
to prove this result, it suffices to establish that
\[
\int_{\mathbf{K}}\frac{d\mu_{M}\left(  m\right)  }{\Psi_{\boldsymbol{J}\left(
\lambda\right)  }^{\epsilon}\left(  \Phi_{\boldsymbol{J}\left(  \lambda
\right)  }^{_{\epsilon}}\left(  m\right)  \right)  }=\int_{\Phi
_{\boldsymbol{J}\left(  \lambda\right)  }^{_{\epsilon}}\left(  \mathbf{K}%
\right)  }d\xi.
\]
The change of variable $\xi=\Phi_{\boldsymbol{J}\left(  \lambda\right)
}^{_{\epsilon}}\left(  m\right)  $ yields%
\begin{align*}
\int_{\mathbf{K}}\frac{d\mu_{M}\left(  m\right)  }{\Psi_{\boldsymbol{J}\left(
\lambda\right)  }^{\epsilon}\left(  \Phi_{\boldsymbol{J}\left(  \lambda
\right)  }^{_{\epsilon}}\left(  m\right)  \right)  }  &  =\int_{\Phi
_{\boldsymbol{J}\left(  \lambda\right)  }^{_{\epsilon}}\left(  \mathbf{K}%
\right)  }\frac{d\mu_{M}\left(  \left(  \Phi_{\boldsymbol{J}\left(
\lambda\right)  }^{_{\epsilon}}\right)  ^{-1}\left(  \xi\right)  \right)
}{\Psi_{\boldsymbol{J}\left(  \lambda\right)  }^{\epsilon}\left(  \xi\right)
}\\
&  =\int_{\Phi_{\boldsymbol{J}\left(  \lambda\right)  }^{_{\epsilon}}\left(
\mathbf{K}\right)  }\frac{1}{\Psi_{\boldsymbol{J}\left(  \lambda\right)
}^{\epsilon}\left(  \xi\right)  }\Psi_{\boldsymbol{J}\left(  \lambda\right)
}^{\epsilon}\left(  \xi\right)  d\xi\\
&  =\int_{\Phi_{\boldsymbol{J}\left(  \lambda\right)  }^{_{\epsilon}}\left(
\mathbf{K}\right)  }d\xi.
\end{align*}
Since $\Phi_{\boldsymbol{J}\left(  \lambda\right)  }^{_{\epsilon}}$ is
continuous and $\mathbf{K}$ is compact, we obtain
\[
\int_{\mathbf{K}}\frac{d\mu_{M}\left(  m\right)  }{\Psi_{\boldsymbol{J}\left(
\lambda\right)  }^{\epsilon}\left(  \Phi_{\boldsymbol{J}\left(  \lambda
\right)  }^{_{\epsilon}}\left(  m\right)  \right)  }=\left\vert \Phi
_{\boldsymbol{J}\left(  \lambda\right)  }^{_{\epsilon}}\left(  \mathbf{K}%
\right)  \right\vert <\infty.
\]

\end{proof}

\begin{lemma}
Let $\mathbf{s}\in C_{c}^{\infty}\left(  M\right)  $ such that the support
$\mathrm{Supp}\left(  \mathbf{s}\right)  $ of $\mathbf{s}$ is a compact subset
of $\Omega_{\epsilon}^{\circ}.$ Then $\mathbf{s}$ is square-integrable with
respect to the Haar measure $d\mu_{M}\left(  m\right)  .$
\end{lemma}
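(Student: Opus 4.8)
The plan is to reduce the statement to the elementary fact that a continuous function of compact support on a locally compact group lies in every $L^p$ of the Haar measure, so that essentially no computation is required. First I would note that $\mathbf{s}\in C_c^\infty(M)$ is in particular continuous, and because $\mathrm{Supp}\left(\mathbf{s}\right)$ is compact it attains its maximum modulus; hence $\mathbf{s}$ is bounded, say by $\sup_{m\in M}\left\vert \mathbf{s}\left(m\right)\right\vert =C<\infty$. Next I would invoke the standard property that a Haar measure assigns finite mass to every compact set: since $M$ is a Lie group, hence locally compact, and $\mathrm{Supp}\left(\mathbf{s}\right)$ is compact, we have $\mu_M\left(\mathrm{Supp}\left(\mathbf{s}\right)\right)<\infty$. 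Combining these two observations yields
\[
\int_M\left\vert \mathbf{s}\left(m\right)\right\vert^2 d\mu_M\left(m\right)=\int_{\mathrm{Supp}\left(\mathbf{s}\right)}\left\vert \mathbf{s}\left(m\right)\right\vert^2 d\mu_M\left(m\right)\le C^2\,\mu_M\left(\mathrm{Supp}\left(\mathbf{s}\right)\right)<\infty,
\]
which is precisely the assertion that $\mathbf{s}\in L^2\left(M,d\mu_M\right)$.

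To keep the argument self-contained within the coordinate framework already set up, I would alternatively transport the integral to the Lie algebra $\mathfrak{m}$. Since $\mathrm{Supp}\left(\mathbf{s}\right)\subset\Omega_\epsilon^\circ=\mathbf{e}\left(\mathcal{O}_\epsilon\right)$, the change of variable $m=\mathbf{e}\left(A\right)$ together with the Radon--Nikodym identity $d\mu_M\left(\mathbf{e}\left(A\right)\right)=\rho\left(A\right)dA$ gives
\[
\int_M\left\vert \mathbf{s}\left(m\right)\right\vert^2 d\mu_M\left(m\right)=\int_{\mathcal{O}_\epsilon}\left\vert \mathbf{s}\left(\mathbf{e}\left(A\right)\right)\right\vert^2\rho\left(A\right)dA.
\]
The integrand is supported on the relatively compact set $\mathbf{e}^{-1}\left(\mathrm{Supp}\left(\mathbf{s}\right)\right)\subset\mathcal{O}_\epsilon$, on which the continuous function $\left\vert \mathbf{s}\circ\mathbf{e}\right\vert^2$ and the analytic, non-vanishing weight $\rho$ are both bounded; hence the right-hand integral is finite.

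There is no genuine obstacle here: the only point requiring care is the elementary observation that the Haar measure $d\mu_M$ is finite on compact subsets of $M$, which is immediate from the local compactness of the Lie group $M$. The lemma is essentially a bookkeeping statement confirming that the smooth, compactly supported candidate generators $\mathbf{s}$ indeed belong to the Hilbert space $L^2\left(M,d\mu_M\right)$ in which the frame $\mathcal{S}\left(\mathbf{s},\Gamma^\epsilon\right)$ of Theorem \ref{smooth frames} is to be constructed.
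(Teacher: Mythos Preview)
Your argument is correct. Both of your approaches are valid, and the first one is in fact the most economical possible proof of this statement: a continuous function with compact support is bounded, and Haar measure is finite on compacta, so the $L^2$-norm is finite.

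The paper's proof is also correct but proceeds differently: rather than stopping at the elementary estimate or transporting to $\mathfrak{m}$ via $\mathbf{e}$ and the density $\rho$ as you do in your second argument, it pushes the integral all the way to $\beta_{\boldsymbol{J}(\lambda)}(\mathcal{O}_\epsilon)\subset\mathfrak{p}^\ast$ via the diffeomorphism $\Phi_{\boldsymbol{J}(\lambda)}^{\epsilon}$ and the Radon--Nikodym weight $\Psi_{\boldsymbol{J}(\lambda)}^{\epsilon}$, obtaining
\[
\int_M |\mathbf{s}(m)|^2\,d\mu_M(m)=\int_{\Sigma_{\mathbf{s}}}\bigl|\mathbf{s}\bigl([\Phi_{\boldsymbol{J}(\lambda)}^{\epsilon}]^{-1}(\xi)\bigr)\bigr|^2\Psi_{\boldsymbol{J}(\lambda)}^{\epsilon}(\xi)\,d\xi,
\]
and then bounding by the $L^\infty$-norm of the integrand times $|\Sigma_{\mathbf{s}}|$. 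This is logically no stronger than your argument, but it rehearses exactly the change of variables and the weight $\Psi_{\boldsymbol{J}(\lambda)}^{\epsilon}$ that are used in earnest in the subsequent frame computation for Theorem~\ref{smooth frames}; your version buys simplicity, the paper's version buys consistency with the surrounding machinery.
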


\begin{proof}
Recall that $\nu,\mathbf{e,}\Phi_{\boldsymbol{J}\left(  \lambda\right)
}^{_{\epsilon}}$ are smooth bijective functions and
\[
\Psi_{\boldsymbol{J}\left(  \lambda\right)  }^{\epsilon}\left(  \xi\right)
=w\left(  \nu\left(  \mathbf{e}^{-1}\left(  \left[  \Phi_{\boldsymbol{J}%
\left(  \lambda\right)  }^{_{\epsilon}}\right]  ^{-1}\left(  \xi\right)
\right)  \right)  \right)  \mathbf{d}\left(  \xi\right)  .
\]
As such, $\Psi_{\boldsymbol{J}\left(  \lambda\right)  }^{\epsilon}$ is a
positive smooth function defined on
\[
\beta_{\boldsymbol{J}\left(  \lambda\right)  }\left(  \mathcal{O}_{\epsilon
}\right)  \supset\Sigma_{\mathbf{s}}.
\]
Next,
\begin{align*}
\int_{M}\left\vert \mathbf{s}\left(  m\right)  \right\vert ^{2}d\mu_{M}\left(
m\right)   &  =\int_{\Sigma_{\mathbf{s}}}\left\vert \mathbf{s}\left(  \left[
\Phi_{\boldsymbol{J}\left(  \lambda\right)  }^{_{\epsilon}}\right]
^{-1}\left(  \xi\right)  \right)  \right\vert ^{2}d\mu_{M}\left(  \left[
\Phi_{\boldsymbol{J}\left(  \lambda\right)  }^{_{\epsilon}}\right]
^{-1}\left(  \xi\right)  \right) \\
&  =\int_{\Sigma_{\mathbf{s}}}\left\vert \mathbf{s}\left(  \left[
\Phi_{\boldsymbol{J}\left(  \lambda\right)  }^{_{\epsilon}}\right]
^{-1}\left(  \xi\right)  \right)  \right\vert ^{2}\Psi_{\boldsymbol{J}\left(
\lambda\right)  }^{\epsilon}\left(  \xi\right)  d\xi\\
&  =\int_{\Sigma_{\mathbf{s}}}\left\vert \mathbf{s}\left(  \left[
\Phi_{\boldsymbol{J}\left(  \lambda\right)  }^{_{\epsilon}}\right]
^{-1}\left(  \xi\right)  \right)  \left(  \Psi_{\boldsymbol{J}\left(
\lambda\right)  }^{\epsilon}\left(  \xi\right)  \right)  ^{1/2}\right\vert
^{2}d\xi\\
&  \leq\int_{\Sigma_{\mathbf{s}}}\left\Vert \xi\mapsto\mathbf{s}\left(
\left[  \Phi_{\boldsymbol{J}\left(  \lambda\right)  }^{_{\epsilon}}\right]
^{-1}\left(  \xi\right)  \right)  \left(  \Psi_{\boldsymbol{J}\left(
\lambda\right)  }^{\epsilon}\left(  \xi\right)  \right)  ^{1/2}\right\Vert
_{L^{\infty}\left(  \Sigma_{\mathbf{s}}\right)  }^{2}d\xi\\
&  =\left\Vert \xi\mapsto\mathbf{s}\left(  \left[  \Phi_{\boldsymbol{J}\left(
\lambda\right)  }^{_{\epsilon}}\right]  ^{-1}\left(  \xi\right)  \right)
\left(  \Psi_{\boldsymbol{J}\left(  \lambda\right)  }^{\epsilon}\left(
\xi\right)  \right)  ^{1/2}\right\Vert _{L^{\infty}\left(  \Sigma_{\mathbf{s}%
}\right)  }^{2}\left\vert \Sigma_{\mathbf{s}}\right\vert .
\end{align*}
The compactness of $\Sigma_{\mathbf{s}}$ together with the fact that the
function
\[
\xi\mapsto\mathbf{s}\left(  \left[  \Phi_{\boldsymbol{J}\left(  \lambda
\right)  }^{_{\epsilon}}\right]  ^{-1}\left(  \xi\right)  \right)  \left(
\Psi_{\boldsymbol{J}\left(  \lambda\right)  }^{\epsilon}\left(  \xi\right)
\right)  ^{1/2}%
\]
is uniformly continuous on $\Sigma_{\mathbf{s}}$ gives the desired result.
\end{proof}

\begin{lemma}
Let $\mathbf{s}_{1}$ be defined such that
\begin{equation}
\mathbf{s}_{1}\left(  m\right)  =\mathbf{s}\left(  m\right)  \sqrt
{\Psi_{\boldsymbol{J}\left(  \lambda\right)  }^{\epsilon}\left(
\Phi_{\boldsymbol{J}\left(  \lambda\right)  }^{_{\epsilon}}\left(  m\right)
\right)  }.
\end{equation}
Then $\mathbf{s}_{1}$ is a smooth function of compact support.
\end{lemma}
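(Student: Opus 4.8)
The plan is to exhibit $\mathbf{s}_{1}$ as a product of two smooth functions on the open set $\Omega_{\epsilon}^{\circ}$ and then extend it by zero to a globally smooth, compactly supported function on $M$. The essential subtlety is that the factor
\[
g\left(  m\right)  =\sqrt{\Psi_{\boldsymbol{J}\left(  \lambda\right)
}^{\epsilon}\left(  \Phi_{\boldsymbol{J}\left(  \lambda\right)  }^{_{\epsilon}
}\left(  m\right)  \right)  }
\]
is only defined on $\Omega_{\epsilon}^{\circ}$, since $\Phi_{\boldsymbol{J}\left(  \lambda\right)  }^{_{\epsilon}}$ maps $\Omega_{\epsilon}^{\circ}$ onto $\beta_{\boldsymbol{J}\left(  \lambda\right)  }\left(  \mathcal{O}_{\epsilon}\right)  $ and $\Psi_{\boldsymbol{J}\left(  \lambda\right)  }^{\epsilon}$ is defined there; so $\mathbf{s}_{1}$ cannot be treated as a global product, and the whole point is that $\mathrm{Supp}\left(  \mathbf{s}\right)  $ is \emph{compactly contained} in $\Omega_{\epsilon}^{\circ}$.

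First I would record that $g$ is smooth on $\Omega_{\epsilon}^{\circ}$. Indeed, $\Phi_{\boldsymbol{J}\left(  \lambda\right)  }^{_{\epsilon}}=\beta_{\boldsymbol{J}\left(  \lambda\right)  }\circ\mathbf{e}^{-1}$ is a diffeomorphism, hence smooth, and $\Psi_{\boldsymbol{J}\left(  \lambda\right)  }^{\epsilon}$ has already been shown to be a positive smooth function on $\beta_{\boldsymbol{J}\left(  \lambda\right)  }\left(  \mathcal{O}_{\epsilon}\right)  $. Since $t\mapsto\sqrt{t}$ is smooth on $\left(  0,\infty\right)  $ and $\Psi_{\boldsymbol{J}\left(  \lambda\right)  }^{\epsilon}\circ\Phi_{\boldsymbol{J}\left(  \lambda\right)  }^{_{\epsilon}}$ takes values there, the composite $g$ is smooth on $\Omega_{\epsilon}^{\circ}$. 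Consequently $\mathbf{s}_{1}=\mathbf{s}\cdot g$ is smooth on $\Omega_{\epsilon}^{\circ}$ as a product of smooth functions.

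Next I would carry out the gluing. Put $K=\mathrm{Supp}\left(  \mathbf{s}\right)  $, a compact subset of the open set $\Omega_{\epsilon}^{\circ}$. Then $\left\{  \Omega_{\epsilon}^{\circ},\,M\setminus K\right\}  $ is an open cover of $M$, because every point of $M$ lies either in $K\subset\Omega_{\epsilon}^{\circ}$ or in the open set $M\setminus K$. On $\Omega_{\epsilon}^{\circ}$ the function $\mathbf{s}_{1}$ is smooth by the previous step; on $M\setminus K$ one has $\mathbf{s}\equiv0$, hence $\mathbf{s}_{1}\equiv0$, which is smooth; and the two descriptions agree on the overlap $\Omega_{\epsilon}^{\circ}\setminus K$, where $\mathbf{s}$ and therefore $\mathbf{s}_{1}$ vanish. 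By locality of smoothness, $\mathbf{s}_{1}\in C^{\infty}\left(  M\right)  $. Compact support is then immediate, since $\mathbf{s}_{1}\left(  m\right)  =0$ whenever $\mathbf{s}\left(  m\right)  =0$ forces $\mathrm{Supp}\left(  \mathbf{s}_{1}\right)  \subseteq K$. The main (though mild) obstacle to watch is precisely that $g$ is not globally defined, so that smoothness of $\mathbf{s}_{1}$ at and beyond $\partial K$ is not a statement about a global product but a consequence of the compact containment $K\subset\Omega_{\epsilon}^{\circ}$ together with the open-cover argument above.
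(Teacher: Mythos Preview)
Your proof is correct and follows essentially the same route as the paper: both argue that $\mathbf{s}_{1}$ is the product of the smooth positive factor $\sqrt{\Psi_{\boldsymbol{J}(\lambda)}^{\epsilon}\circ\Phi_{\boldsymbol{J}(\lambda)}^{\epsilon}}$ with the compactly supported smooth function $\mathbf{s}$, and inherit compact support from $\mathbf{s}$. Your treatment is in fact more careful than the paper's, which glosses over the point you highlight---that the square-root factor is only defined on $\Omega_{\epsilon}^{\circ}$---whereas you make the extension-by-zero rigorous via the open cover $\{\Omega_{\epsilon}^{\circ},\,M\setminus K\}$.
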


\begin{proof}
Since
\[
\Psi_{\boldsymbol{J}\left(  \lambda\right)  }^{\epsilon}\left(  \xi\right)
=w\left(  \nu\left(  \mathbf{e}^{-1}\left(  \left[  \Phi_{\boldsymbol{J}%
\left(  \lambda\right)  }^{_{\epsilon}}\right]  ^{-1}\left(  \xi\right)
\right)  \right)  \right)
\]
it is clear that
\[
\Psi_{\boldsymbol{J}\left(  \lambda\right)  }^{\epsilon}:\beta_{\boldsymbol{J}%
\left(  \lambda\right)  }\left(  \mathcal{O}_{\epsilon}\right)  \rightarrow
\left(  0,\infty\right)
\]
is a smooth positive function defined on $\beta_{\boldsymbol{J}\left(
\lambda\right)  }\left(  \mathcal{O}_{\epsilon}\right)  .$ Next, since
$\mathbf{s}_{1}$ is the product of the smooth functions $\sqrt{\Psi
_{\boldsymbol{J}\left(  \lambda\right)  }^{\epsilon}\left(  \Phi
_{\boldsymbol{J}\left(  \lambda\right)  }^{_{\epsilon}}\left(  m\right)
\right)  }$ and $\mathbf{s}\left(  m\right)  ,$ and since $\sqrt
{\Psi_{\boldsymbol{J}\left(  \lambda\right)  }^{\epsilon}\left(
\Phi_{\boldsymbol{J}\left(  \lambda\right)  }^{_{\epsilon}}\left(  m\right)
\right)  }$ is a positive function, it follows that $\mathbf{s}_{1}$ is a
smooth function which is supported on $\left[  \Phi_{\boldsymbol{J}\left(
\lambda\right)  }^{_{\epsilon}}\right]  ^{-1}\left(  \Sigma_{\mathbf{s}%
}\right)  .$
\end{proof}

Next, let $\Gamma_{M\text{ }}$ be a discrete subset of $M$. Next, let
$\mathbf{h}$ be a continuous function which is also square-integrable with
respect to the Haar measure. Then
\begin{align*}
&
{\displaystyle\sum\limits_{\exp\left(  X\right)  \in\Gamma_{P}^{\epsilon}}}
{\displaystyle\sum\limits_{\gamma\in\Gamma_{M\text{ }}}}
\left\vert \left\langle \mathbf{h},\pi_{\lambda}\left(  \gamma^{-1}\right)
\pi_{\lambda}\left(  \exp X\right)  \mathbf{s}\right\rangle \right\vert ^{2}\\
&  =%
{\displaystyle\sum\limits_{\exp\left(  X\right)  \in\Gamma_{P}^{\epsilon}}}
{\displaystyle\sum\limits_{\gamma\in\Gamma_{M\text{ }}}}
\left\vert \int_{M}\mathbf{h}\left(  m\right)  e^{-2\pi i\left\langle \left(
\left[  Ad\left(  \gamma m\right)  \right]  ^{-1}\right)  ^{\ast}%
\lambda,X\right\rangle }\overline{\mathbf{s}\left(  \gamma m\right)  }d\mu
_{M}\left(  m\right)  \right\vert ^{2}.
\end{align*}
The change of variable $n=\gamma m$ yields
\begin{align*}
&
{\displaystyle\sum\limits_{\exp\left(  X\right)  \in\Gamma_{P}^{\epsilon}}}
{\displaystyle\sum\limits_{\gamma\in\Gamma_{M\text{ }}}}
\left\vert \int_{M}\mathbf{h}\left(  m\right)  e^{-2\pi i\left\langle \left(
\left[  Ad\left(  \gamma m\right)  \right]  ^{-1}\right)  ^{\ast}%
\lambda,X\right\rangle }\overline{\mathbf{s}\left(  \gamma m\right)  }d\mu
_{M}\left(  m\right)  \right\vert ^{2}\\
&  =%
{\displaystyle\sum\limits_{\exp\left(  X\right)  \in\Gamma_{P}^{\epsilon}}}
{\displaystyle\sum\limits_{\gamma\in\Gamma_{M\text{ }}}}
\left\vert \int_{M}\mathbf{h}\left(  \gamma^{-1}n\right)  e^{-2\pi
i\left\langle \left(  \left[  Ad\left(  n\right)  \right]  ^{-1}\right)
^{\ast}\lambda,X\right\rangle }\overline{\mathbf{s}\left(  n\right)  }d\mu
_{M}\left(  \gamma^{-1}n\right)  \right\vert ^{2}.
\end{align*}
Secondly, since
\[
\left\langle \left(  \left[  Ad\left(  n\right)  \right]  ^{-1}\right)
^{\ast}\lambda,X\right\rangle =\left\langle \Phi_{\boldsymbol{J}\left(
\lambda\right)  }^{_{\epsilon}}\left(  n\right)  ,X\right\rangle
\]
it follows that%
\begin{align*}
&
{\displaystyle\sum\limits_{\gamma\in\Gamma_{M\text{ }}}}
{\displaystyle\sum\limits_{\exp\left(  X\right)  \in\Gamma_{P}^{\epsilon}}}
\left\vert \left\langle \mathbf{h},\pi_{\lambda}\left(  \gamma^{-1}\right)
\pi_{\lambda}\left(  \exp X\right)  \mathbf{s}\right\rangle \right\vert ^{2}\\
&  =%
{\displaystyle\sum\limits_{\gamma\in\Gamma_{M\text{ }}}}
{\displaystyle\sum\limits_{\exp\left(  X\right)  \in\Gamma_{P}^{\epsilon}}}
\left\vert \int_{M}\mathbf{h}\left(  \gamma^{-1}n\right)  e^{-2\pi
i\left\langle \Phi_{\boldsymbol{J}\left(  \lambda\right)  }^{_{\epsilon}%
}\left(  n\right)  ,X\right\rangle }\overline{\mathbf{s}\left(  n\right)
}d\mu_{M}\left(  \gamma^{-1}n\right)  \right\vert ^{2}\\
&  =%
{\displaystyle\sum\limits_{\gamma\in\Gamma_{M\text{ }}}}
{\displaystyle\sum\limits_{\exp\left(  X\right)  \in\Gamma_{P}^{\epsilon}}}
\left\vert \int_{M}\mathbf{h}\left(  \gamma^{-1}n\right)  e^{-2\pi
i\left\langle \Phi_{\boldsymbol{J}\left(  \lambda\right)  }^{_{\epsilon}%
}\left(  n\right)  ,X\right\rangle }\overline{\mathbf{s}\left(  n\right)
}d\mu_{M}\left(  n\right)  \right\vert ^{2}\\
&  =%
{\displaystyle\sum\limits_{\gamma\in\Gamma_{M\text{ }}}}
{\displaystyle\sum\limits_{\exp\left(  X\right)  \in\Gamma_{P}^{\epsilon}}}
\left\vert \int_{\Omega_{\epsilon}^{\circ}}\mathbf{h}\left(  \gamma
^{-1}n\right)  e^{-2\pi i\left\langle \Phi_{\boldsymbol{J}\left(
\lambda\right)  }^{_{\epsilon}}\left(  n\right)  ,X\right\rangle }%
\overline{\mathbf{s}\left(  n\right)  }d\mu_{M}\left(  n\right)  \right\vert
^{2}=\left(  \ast\right)
\end{align*}
Thirdly, let $\xi=\Phi_{\boldsymbol{J}\left(  \lambda\right)  }^{_{\epsilon}%
}\left(  n\right)  $ and define
\[
\mathbf{z}_{\gamma}\left(  \xi\right)  =\mathbf{h}\left(  \gamma^{-1}\left[
\Phi_{\boldsymbol{J}\left(  \lambda\right)  }^{_{\epsilon}}\right]
^{-1}\left(  \xi\right)  \right)  \overline{\mathbf{s}\left(  \left[
\Phi_{\boldsymbol{J}\left(  \lambda\right)  }^{_{\epsilon}}\right]
^{-1}\left(  \xi\right)  \right)  }.
\]
Since
\[
d\mu_{M}\left(  \left(  \Phi_{\boldsymbol{J}\left(  \lambda\right)
}^{_{\epsilon}}\right)  ^{-1}\left(  \xi\right)  \right)  =\Psi
_{\boldsymbol{J}\left(  \lambda\right)  }^{\epsilon}\left(  \xi\right)  d\xi
\]
it follows that
\begin{align*}
&  \int_{\Omega_{\epsilon}^{\circ}}\mathbf{h}\left(  \gamma^{-1}n\right)
e^{-2\pi i\left\langle \Phi_{\boldsymbol{J}\left(  \lambda\right)
}^{_{\epsilon}}\left(  n\right)  ,X\right\rangle }\overline{\mathbf{s}\left(
n\right)  }d\mu_{M}\left(  n\right) \\
&  =\int_{\beta_{\boldsymbol{J}\left(  \lambda\right)  }\left(  \mathcal{O}%
_{\epsilon}\right)  }\mathbf{h}\left(  \gamma^{-1}\left[  \Phi_{\boldsymbol{J}%
\left(  \lambda\right)  }^{_{\epsilon}}\right]  ^{-1}\left(  \xi\right)
\right)  e^{-2\pi i\left\langle \xi,X\right\rangle }\overline{\mathbf{s}%
\left(  \left[  \Phi_{\boldsymbol{J}\left(  \lambda\right)  }^{_{\epsilon}%
}\right]  ^{-1}\left(  \xi\right)  \right)  }d\mu_{M}\left(  \left[
\Phi_{\boldsymbol{J}\left(  \lambda\right)  }^{_{\epsilon}}\right]
^{-1}\left(  \xi\right)  \right) \\
&  =\int_{\beta_{\boldsymbol{J}\left(  \lambda\right)  }\left(  \mathcal{O}%
_{\epsilon}\right)  }\underset{=\mathbf{z}_{\gamma}\left(  \xi\right)
}{\underbrace{\left(  \mathbf{h}\left(  \gamma^{-1}\left[  \Phi
_{\boldsymbol{J}\left(  \lambda\right)  }^{_{\epsilon}}\right]  ^{-1}\left(
\xi\right)  \right)  \overline{\mathbf{s}\left(  \left[  \Phi_{\boldsymbol{J}%
\left(  \lambda\right)  }^{_{\epsilon}}\right]  ^{-1}\left(  \xi\right)
\right)  }\right)  }}e^{-2\pi i\left\langle \xi,X\right\rangle }%
\underset{=\Psi_{\boldsymbol{J}\left(  \lambda\right)  }^{\epsilon}\left(
\xi\right)  d\xi}{\underbrace{d\mu_{M}\left(  \left[  \Phi_{\boldsymbol{J}%
\left(  \lambda\right)  }^{_{\epsilon}}\right]  ^{-1}\left(  \xi\right)
\right)  }}.
\end{align*}
As a consequence of the observations made above%
\begin{align*}
\left(  \ast\right)   &  =%
{\displaystyle\sum\limits_{\gamma\in\Gamma_{M\text{ }}}}
{\displaystyle\sum\limits_{\exp\left(  X\right)  \in\Gamma_{P}^{\epsilon}}}
\left\vert \int_{\beta_{\boldsymbol{J}\left(  \lambda\right)  }\left(
\mathcal{O}_{\epsilon}\right)  }\mathbf{z}_{\gamma}\left(  \xi\right)  \times
e^{-2\pi i\left\langle \xi,X\right\rangle }\times\Psi_{\boldsymbol{J}\left(
\lambda\right)  }^{\epsilon}\left(  \xi\right)  d\xi\right\vert ^{2}\\
&  =%
{\displaystyle\sum\limits_{\gamma\in\Gamma_{M\text{ }}}}
{\displaystyle\sum\limits_{\exp\left(  X\right)  \in\Gamma_{P}^{\epsilon}}}
\left\vert \int_{\beta_{\boldsymbol{J}\left(  \lambda\right)  }\left(
\mathcal{O}_{\epsilon}\right)  }\left[  \mathbf{z}_{\gamma}\left(  \xi\right)
\Psi_{\boldsymbol{J}\left(  \lambda\right)  }^{\epsilon}\left(  \xi\right)
\right]  e^{-2\pi i\left\langle \xi,X\right\rangle }d\xi\right\vert ^{2}\\
&  =\left(  \ast\ast\right)
\end{align*}
Next, note that the support of the function
\[
\xi\mapsto\mathbf{s}\left(  \left[  \Phi_{\boldsymbol{J}\left(  \lambda
\right)  }^{_{\epsilon}}\right]  ^{-1}\left(  \xi\right)  \right)
\]
is a compact subset $\Sigma_{\mathbf{s}}$ of $\beta_{\boldsymbol{J}\left(
\lambda\right)  }\left(  \mathcal{O}_{\epsilon}\right)  $ which is contained
in a fundamental domain of $\Lambda_{\epsilon}^{\star}.$ Moreover, the
trigonometric system
\[
\left\{  \xi\mapsto\frac{e^{-2\pi i\left\langle \xi,X\right\rangle }%
}{\left\vert \mathrm{\det}\left(  \mathcal{L}^{\epsilon}\right)  ^{\top
}\right\vert ^{1/2}}:X\in\Lambda_{\epsilon}\right\}
\]
is an orthonormal basis for $L^{2}\left(  \mathfrak{p}^{\ast}/\Lambda
_{\epsilon}^{\star}\right)  .$ Next,
\begin{align*}
&  \left(  \ast\ast\right) \\
&  =%
{\displaystyle\sum\limits_{\gamma\in\Gamma_{M\text{ }}}}
{\displaystyle\sum\limits_{\exp\left(  X\right)  \in\Gamma_{P}^{\epsilon}}}
\left\vert \int_{\Sigma_{\mathbf{s}}}\left[  \left\vert \mathrm{\det}\left(
\mathcal{L}^{\epsilon}\right)  ^{\top}\right\vert ^{1/2}\mathbf{z}_{\gamma
}\left(  \xi\right)  \Psi_{\boldsymbol{J}\left(  \lambda\right)  }^{\epsilon
}\left(  \xi\right)  \right]  \frac{e^{-2\pi i\left\langle \xi,X\right\rangle
}}{\left\vert \mathrm{\det}\left(  \mathcal{L}^{\epsilon}\right)  ^{\top
}\right\vert ^{1/2}}d\xi\right\vert ^{2}\\
&  =%
{\displaystyle\sum\limits_{\gamma\in\Gamma_{M\text{ }}}}
\int_{\Sigma_{\mathbf{s}}}\left\vert \left\vert \mathrm{\det}\left(
\mathcal{L}^{\epsilon}\right)  ^{\top}\right\vert ^{1/2}\mathbf{z}_{\gamma
}\left(  \xi\right)  \Psi_{\boldsymbol{J}\left(  \lambda\right)  }^{\epsilon
}\left(  \xi\right)  \right\vert ^{2}d\xi\\
&  =%
{\displaystyle\sum\limits_{\gamma\in\Gamma_{M\text{ }}}}
\int_{\Sigma_{\mathbf{s}}}\left\vert \left\vert \mathrm{\det}\left(
\mathcal{L}^{\epsilon}\right)  ^{\top}\right\vert ^{1/2}\mathbf{z}_{\gamma
}\left(  \xi\right)  \Psi_{\boldsymbol{J}\left(  \lambda\right)  }^{\epsilon
}\left(  \xi\right)  ^{1/2}\right\vert ^{2}\Psi_{\boldsymbol{J}\left(
\lambda\right)  }^{\epsilon}\left(  \xi\right)  d\xi\\
&  =\left(  \ast\ast\ast\right)
\end{align*}
Setting
\[
\left[  \Phi_{\boldsymbol{J}\left(  \lambda\right)  }^{_{\epsilon}}\right]
^{-1}\left(  \xi\right)  =n
\]
yields%
\begin{align*}
\mathbf{z}_{\gamma}\left(  \xi\right)   &  =\mathbf{h}\left(  \gamma
^{-1}\left[  \Phi_{\boldsymbol{J}\left(  \lambda\right)  }^{_{\epsilon}%
}\right]  ^{-1}\left(  \xi\right)  \right)  \overline{\mathbf{s}\left(
\left[  \Phi_{\boldsymbol{J}\left(  \lambda\right)  }^{_{\epsilon}}\right]
^{-1}\left(  \xi\right)  \right)  }\\
&  =\mathbf{h}\left(  \gamma^{-1}\left[  \Phi_{\boldsymbol{J}\left(
\lambda\right)  }^{_{\epsilon}}\right]  ^{-1}\left(  \Phi_{\boldsymbol{J}%
\left(  \lambda\right)  }^{_{\epsilon}}\left(  n\right)  \right)  \right)
\overline{\mathbf{s}\left(  \left[  \Phi_{\boldsymbol{J}\left(  \lambda
\right)  }^{_{\epsilon}}\right]  ^{-1}\left(  \Phi_{\boldsymbol{J}\left(
\lambda\right)  }^{_{\epsilon}}\left(  n\right)  \right)  \right)  }\\
&  =\mathbf{h}\left(  \gamma^{-1}n\right)  \overline{\mathbf{s}\left(
n\right)  }%
\end{align*}
and%
\begin{align*}
\left(  \ast\ast\ast\right)   &  =%
{\displaystyle\sum\limits_{\gamma\in\Gamma_{M\text{ }}^{\epsilon}}}
\int_{\left[  \Phi_{\boldsymbol{J}\left(  \lambda\right)  }^{_{\epsilon}%
}\right]  ^{-1}\left(  \Sigma_{\mathbf{s}}\right)  }\left\vert \left\vert
\mathrm{\det}\left(  \mathcal{L}^{\epsilon}\right)  ^{\top}\right\vert
^{1/2}\mathbf{h}\left(  \gamma^{-1}n\right)  \overline{\mathbf{s}\left(
n\right)  }\left[  \Psi_{\boldsymbol{J}\left(  \lambda\right)  }^{\epsilon
}\left(  \Phi_{\boldsymbol{J}\left(  \lambda\right)  }^{_{\epsilon}}\left(
n\right)  \right)  \right]  ^{1/2}\right\vert ^{2}\\
&  \times\left(  \Psi_{\boldsymbol{J}\left(  \lambda\right)  }^{\epsilon
}\left(  \Phi_{\boldsymbol{J}\left(  \lambda\right)  }^{_{\epsilon}}\left(
n\right)  \right)  \right)  d\left(  \Phi_{\boldsymbol{J}\left(
\lambda\right)  }^{_{\epsilon}}\left(  n\right)  \right)  .
\end{align*}
Since
\[
d\mu_{M}\left(  \left[  \Phi_{\boldsymbol{J}\left(  \lambda\right)
}^{_{\epsilon}}\right]  ^{-1}\left(  \xi\right)  \right)  =\Psi
_{\boldsymbol{J}\left(  \lambda\right)  }^{\epsilon}\left(  \xi\right)  d\xi
\]
it follows that
\begin{align*}
&
{\displaystyle\sum\limits_{\gamma\in\Gamma_{M\text{ }}}}
\int_{\left[  \Phi_{\boldsymbol{J}\left(  \lambda\right)  }^{_{\epsilon}%
}\right]  ^{-1}\left(  \Sigma_{\mathbf{s}}\right)  }\left\vert \left\vert
\mathrm{\det}\left(  \mathcal{L}^{\epsilon}\right)  ^{\top}\right\vert
^{1/2}\mathbf{h}\left(  \gamma^{-1}n\right)  \overline{\mathbf{s}\left(
n\right)  }\left[  \Psi_{\boldsymbol{J}\left(  \lambda\right)  }^{\epsilon
}\left(  \Phi_{\boldsymbol{J}\left(  \lambda\right)  }^{_{\epsilon}}\left(
n\right)  \right)  \right]  ^{1/2}\right\vert ^{2}\\
&  \times\Psi_{\boldsymbol{J}\left(  \lambda\right)  }^{\epsilon}\left(
\Phi_{\boldsymbol{J}\left(  \lambda\right)  }^{_{\epsilon}}\left(  n\right)
\right)  d\left(  \Phi_{\boldsymbol{J}\left(  \lambda\right)  }^{_{\epsilon}%
}\left(  n\right)  \right)  .
\end{align*}
Thus,
\begin{align*}
&
{\displaystyle\sum\limits_{\exp\left(  X\right)  \in\Gamma_{P}^{\epsilon}}}
{\displaystyle\sum\limits_{\gamma\in\Gamma_{M\text{ }}}}
\left\vert \left\langle \mathbf{h},\pi_{\lambda}\left(  \gamma^{-1}\right)
\pi_{\lambda}\left(  \exp X\right)  \mathbf{s}\right\rangle \right\vert ^{2}\\
&  =%
{\displaystyle\sum\limits_{\gamma\in\Gamma_{M\text{ }}}}
\int_{\left[  \Phi_{\boldsymbol{J}\left(  \lambda\right)  }^{_{\epsilon}%
}\right]  ^{-1}\left(  \Sigma_{\mathbf{s}}\right)  }\left\vert \left\vert
\mathrm{\det}\left(  \mathcal{L}^{\epsilon}\right)  ^{\top}\right\vert
^{1/2}\mathbf{h}\left(  \gamma^{-1}n\right)  \overline{\mathbf{s}\left(
n\right)  }\left[  \Psi_{\boldsymbol{J}\left(  \lambda\right)  }^{\epsilon
}\left(  \Phi_{\boldsymbol{J}\left(  \lambda\right)  }^{_{\epsilon}}\left(
n\right)  \right)  \right]  ^{1/2}\right\vert ^{2}\\
&  \times d\mu_{M}\left(  \left[  \Phi_{\boldsymbol{J}\left(  \lambda\right)
}^{_{\epsilon}}\right]  ^{-1}\left(  \Phi_{\boldsymbol{J}\left(
\lambda\right)  }^{_{\epsilon}}\left(  n\right)  \right)  \right) \\
&  =%
{\displaystyle\sum\limits_{\gamma\in\Gamma_{M\text{ }}}}
\int_{\left[  \Phi_{\boldsymbol{J}\left(  \lambda\right)  }^{_{\epsilon}%
}\right]  ^{-1}\left(  \Sigma_{\mathbf{s}}\right)  }\left\vert \left\vert
\mathrm{\det}\left(  \mathcal{L}^{\epsilon}\right)  ^{\top}\right\vert
^{1/2}\mathbf{h}\left(  \gamma^{-1}n\right)  \overline{\mathbf{s}\left(
n\right)  }\left[  \Psi_{\boldsymbol{J}\left(  \lambda\right)  }^{\epsilon
}\left(  \Phi_{\boldsymbol{J}\left(  \lambda\right)  }^{_{\epsilon}}\left(
n\right)  \right)  \right]  ^{1/2}\right\vert ^{2}d\mu_{M}\left(  n\right) \\
&  =%
{\displaystyle\sum\limits_{\gamma\in\Gamma_{M\text{ }}}}
\int_{\left[  \Phi_{\boldsymbol{J}\left(  \lambda\right)  }^{_{\epsilon}%
}\right]  ^{-1}\left(  \Sigma_{\mathbf{s}}\right)  }\left\vert \mathbf{h}%
\left(  \gamma^{-1}n\right)  \right\vert ^{2}\left\vert \left\vert
\mathrm{\det}\left(  \mathcal{L}^{\epsilon}\right)  ^{\top}\right\vert
^{1/2}\mathbf{s}\left(  n\right)  \left[  \Psi_{\boldsymbol{J}\left(
\lambda\right)  }^{\epsilon}\left(  \Phi_{\boldsymbol{J}\left(  \lambda
\right)  }^{_{\epsilon}}\left(  n\right)  \right)  \right]  ^{1/2}\right\vert
^{2}d\mu_{M}\left(  n\right) \\
&  =\left(  \ast\ast\ast\ast\right)
\end{align*}
The change of variable $m=\gamma^{-1}n$ yields
\begin{align*}
&  \left\vert \mathbf{h}\left(  \gamma^{-1}n\right)  \right\vert
^{2}\left\vert \left\vert \mathrm{\det}\left(  \mathcal{L}^{\epsilon}\right)
^{\top}\right\vert ^{1/2}\mathbf{s}\left(  n\right)  \left[  \Psi
_{\boldsymbol{J}\left(  \lambda\right)  }^{\epsilon}\left(  \Phi
_{\boldsymbol{J}\left(  \lambda\right)  }^{_{\epsilon}}\left(  n\right)
\right)  \right]  ^{1/2}\right\vert ^{2}\\
&  =\left\vert \mathbf{h}\left(  m\right)  \right\vert ^{2}\left\vert
\left\vert \mathrm{\det}\left(  \mathcal{L}^{\epsilon}\right)  ^{\top
}\right\vert ^{1/2}\mathbf{s}\left(  \gamma m\right)  \left[  \Psi
_{\boldsymbol{J}\left(  \lambda\right)  }^{\epsilon}\left(  \Phi
_{\boldsymbol{J}\left(  \lambda\right)  }^{_{\epsilon}}\left(  \gamma
m\right)  \right)  \right]  ^{1/2}\right\vert ^{2}.
\end{align*}
Since $d\mu_{M}$ is a Haar measure, then
\begin{align*}
&  \left(  \ast\ast\ast\ast\right) \\
&  =%
{\displaystyle\sum\limits_{\gamma\in\Gamma_{M\text{ }}}}
\int_{\gamma^{-1}\left[  \Phi_{\boldsymbol{J}\left(  \lambda\right)
}^{_{\epsilon}}\right]  ^{-1}\left(  \Sigma_{\mathbf{s}}\right)  }\left\vert
\mathbf{h}\left(  m\right)  \right\vert ^{2}\left\vert \left\vert
\mathrm{\det}\left(  \mathcal{L}^{\epsilon}\right)  ^{\top}\right\vert
^{1/2}\mathbf{s}\left(  \gamma m\right)  \left[  \Psi_{\boldsymbol{J}\left(
\lambda\right)  }^{\epsilon}\left(  \Phi_{\boldsymbol{J}\left(  \lambda
\right)  }^{_{\epsilon}}\left(  \gamma m\right)  \right)  \right]
^{1/2}\right\vert ^{2}d\mu_{M}\left(  m\right) \\
&  =%
{\displaystyle\sum\limits_{\gamma\in\Gamma_{M\text{ }}}}
\int_{\gamma^{-1}\left[  \Phi_{\boldsymbol{J}\left(  \lambda\right)
}^{_{\epsilon}}\right]  ^{-1}\left(  \Sigma_{\mathbf{s}}\right)  }\left\vert
\mathbf{h}\left(  m\right)  \right\vert ^{2}\left\vert \sqrt{\Psi
_{\boldsymbol{J}\left(  \lambda\right)  }^{\epsilon}\left(  \Phi
_{\boldsymbol{J}\left(  \lambda\right)  }^{_{\epsilon}}\left(  \gamma
m\right)  \right)  \left\vert \mathrm{\det}\left(  \mathcal{L}^{\epsilon
}\right)  ^{\top}\right\vert }\mathbf{s}\left(  \gamma m\right)  \right\vert
^{2}d\mu_{M}\left(  m\right) \\
&  =\int_{M}\left\vert \mathbf{h}\left(  m\right)  \right\vert ^{2}\left(
{\displaystyle\sum\limits_{\gamma\in\Gamma_{M\text{ }}}}
\left\vert \sqrt{\Psi_{\boldsymbol{J}\left(  \lambda\right)  }^{\epsilon
}\left(  \Phi_{\boldsymbol{J}\left(  \lambda\right)  }^{_{\epsilon}}\left(
\gamma m\right)  \right)  \left\vert \mathrm{\det}\left(  \mathcal{L}%
^{\epsilon}\right)  ^{\top}\right\vert }\mathbf{s}\left(  \gamma m\right)
\right\vert ^{2}\right)  d\mu_{M}\left(  m\right)  .
\end{align*}
Next, let
\[
\mathcal{Z}^{\epsilon}=%
\begin{array}
[c]{c}%
\text{ }\\
\left\{  \left.  \Gamma\text{ is a discrete subset of }M\right\vert \text{
}\inf_{m\in M}\left(
{\displaystyle\sum\limits_{\gamma\in\Gamma_{\text{ }}}}
\left\vert \sqrt{\Psi_{\boldsymbol{J}\left(  \lambda\right)  }^{\epsilon
}\left(  \Phi_{\boldsymbol{J}\left(  \lambda\right)  }^{_{\epsilon}}\left(
\gamma m\right)  \right)  \left\vert \mathrm{\det}\left(  \mathcal{L}%
^{\epsilon}\right)  ^{\top}\right\vert }\mathbf{s}\left(  \gamma m\right)
\right\vert ^{2}\right)  >0\text{ }\right. \\
\left.  \sup_{m\in M}\left(
{\displaystyle\sum\limits_{\gamma\in\Gamma_{M\text{ }}}}
\left\vert \sqrt{\Psi_{\boldsymbol{J}\left(  \lambda\right)  }^{\epsilon
}\left(  \Phi_{\boldsymbol{J}\left(  \lambda\right)  }^{_{\epsilon}}\left(
\gamma m\right)  \right)  \left\vert \mathrm{\det}\left(  \mathcal{L}%
^{\epsilon}\right)  ^{\top}\right\vert }\mathbf{s}\left(  \gamma m\right)
\right\vert ^{2}\right)  <\infty\right\}
\end{array}
.
\]
Since
\[
\mathbf{s}\left(  m\right)  \sqrt{\Psi_{\boldsymbol{J}\left(  \lambda\right)
}^{\epsilon}\left(  \Phi_{\boldsymbol{J}\left(  \lambda\right)  }^{_{\epsilon
}}\left(  m\right)  \right)  }%
\]
is a smooth function of compact support, $\mathcal{Z}^{\epsilon}$ is
non-empty. Fixing $\Gamma_{M\text{ }}^{\epsilon}\in\mathcal{Z}^{\epsilon}$, it
is clear that there exists $A_{\mathbf{s,}\Gamma_{M\text{ }}^{\epsilon
},\Lambda^{\star}}>0,$ and $B_{\mathbf{s,}\Gamma_{M\text{ }}^{\epsilon
},\Lambda^{\star}}<\infty$ such that
\begin{align*}
A_{\mathbf{s,}\Gamma_{M\text{ }}^{\epsilon},\Lambda^{\star}}\times\left\Vert
\mathbf{h}\right\Vert _{L^{2}\left(  M,d\mu_{M}\right)  }^{2}  &  \leq%
{\displaystyle\sum\limits_{\exp\left(  X\right)  \in\Gamma_{P}^{\epsilon}}}
{\displaystyle\sum\limits_{\gamma\in\Gamma_{M\text{ }}^{\epsilon}}}
\left\vert \left\langle \mathbf{h},\pi_{\lambda}\left(  \gamma^{-1}\right)
\pi_{\lambda}\left(  \exp X\right)  \mathbf{s}\right\rangle \right\vert ^{2}\\
&  \leq B_{\mathbf{s,}\Gamma_{M\text{ }}^{\epsilon},\Lambda^{\star}}%
\times\left\Vert \mathbf{h}\right\Vert _{L^{2}\left(  M,d\mu_{M}\right)  }^{2}%
\end{align*}
and
\[
\left\{  \pi_{\lambda}\left(  \kappa\right)  \mathbf{s}:\kappa\in\left(
\Gamma_{M\text{ }}^{\epsilon}\right)  ^{-1}\Gamma_{P}^{\epsilon}\right\}
\]
is a frame generated by a smooth function of compact support.

\begin{acknowledgement}
I am thankful and grateful to Professors Hartmut Fuehr and Azita Mayeli for
proofreading and for providing valuable feedback and corrections that helped
me improve earlier versions of this manuscript.
\end{acknowledgement}

\end{document}